\providecommand{\lang}{british}
\providecommand{\KOMAopt}{twoside=semi, DIV=default, headinclude}
\definecolor{darkblue}{rgb}{0.0,0.0,0.6}
\newcommand{\Kdual}{\mathrm{D}}
\newcommand{\op}{\mathrm{op}}
\newcommand{\donothing}[1]{}
\newcommand{\dsum}{\mathbin{\oplus}}
\DeclareMathOperator{\Hom}{Hom}
\newcommand{\id}{\mathrm{id}}
\newcommand{\isoto}{\stackrel{\sim}{\to}}
\newcommand{\lperp}[1]{\prescript{\perp}{}{#1}}
	\newcommand{\rperp}[1]{{#1}^\perp}
\newcommand{\susp}{\Sigma}
\newcommand{\EE}{\mathbb{E}}
\newcommand{\KK}{\mathbb{K}}
\newcommand{\NN}{\mathbb{N}}
\newcommand{\RR}{\mathbb{R}}
\newcommand{\ZZ}{\mathbb{Z}}
\newcommand{\Lie}{\mathfrak}
\renewcommand{\sl}{\Lie{sl}}
\newcommand{\defeq}{\coloneqq}
	\DeclareMathOperator{\add}{add}
\DeclareMathOperator{\indec}{indec}
\DeclareMathOperator{\Modcat}{Mod}
\newcommand{\blank}{\mathord{\char123}}
\newcommand{\cat}{\mathcal}
\renewcommand{\epsilon}{\varepsilon}
\renewcommand{\geq}{\geqslant}
\renewcommand{\leq}{\leqslant}
\newcommand{\type}{\mathsf}
\newcommand{\bdry}{\partial}
\newcommand{\close}[1]{\overline{#1}}
\newcommand{\univ}{\tilde}  \theoremstyle{plain}
\newtheorem{thm}{Theorem}[section]
\newtheorem{thm*}{Theorem}
\newtheorem{lem}[thm]{Lemma}
\newtheorem{cor}[thm]{Corollary}
\newtheorem{cor*}{Corollary}
\newtheorem{prop}[thm]{Proposition}
\theoremstyle{definition}
\newtheorem{defn}[thm]{Definition}
\newtheorem{eg}[thm]{Example}
\theoremstyle{remark}
\newtheorem{rem}[thm]{Remark}
\numberwithin{equation}{section} \usepackage[backend=biber,citestyle=numeric-comp,bibstyle=numeric,maxbibnames=99,giveninits=true,doi=false,isbn=false,url=false,eprint=true]{biblatex}
\renewcommand{\bibinitdelim}{}
\patchcmd\blx@bblinput{\blx@blxinit}
                      {\blx@blxinit
                      }{}{\fail}
\title[Cluster categories for completed infinity-gons I]{Cluster categories for completed infinity-gons I: Categorifying triangulations}
\author{İlke Çanakçı}
\address[İ.\ Çanakçı]{Department of Mathematics\\Vrije Universiteit Amsterdam\\De Boelelaan 1111\\1081~HV~Amsterdam\\Netherlands}
\email{\href{mailto:i.canakci@vu.nl}{i.canakci@vu.nl}}
\urladdr{\url{https://sites.google.com/view/ilkecanakci/}}
\author{Martin Kalck}
\address[M.\ Kalck]{Institut für Mathematik und Wissenschaftliches Rechnen\\Universität Graz\\Heinrichstraße 36\\8010 Graz\\Austria}
\email{\href{mailto:martin.kalck@uni-graz.at}{martin.kalck@uni-graz.at}}
\urladdr{\url{https://imsc.uni-graz.at/kalck/}}
\author{Matthew Pressland}
\address[M.\ Pressland]{School of Mathematics \& Statistics\\University of Glasgow\\University Place\\Glasgow G12~8QQ\\United Kingdom}
\email{\href{mailto:matthew.pressland@unicaen.fr}{matthew.pressland@unicaen.fr}}
\urladdr{\url{https://mdpressland.github.io}}
\subjclass[2020]{13F60, 18G25, 18G80}
\keywords{Cluster-tilting, extriangulated category, mutation, triangulation}
\newcommand{\disc}{\mathbb{D}}
\newcommand{\HomT}{\mathbf{F}}
\newcommand{\ITcat}{\mathcal{C}}
\renewcommand{\lim}{\operatorname{lim}}
\newcommand{\loc}{\mathbf{V}}
\newcommand{\pts}{\mathbb{M}}
\newcommand{\PYcat}{\overline{\mathcal{C}}}
\newcommand{\s}{\mathfrak{s}}
\newcommand{\Serre}{\mathbf{S}}
\newcommand{\squeeze}[1]{\overline{#1}}
\newcommand{\tri}{\mathbb{T}}
\renewcommand{\univ}{\widetilde}
\renewcommand{\subset}{\subseteq}
\newcommand{\infl}{\rightarrowtail}
\newcommand{\defl}{\twoheadrightarrow}
\newcommand{\confl}{\dashrightarrow}
\tikzset{
match/.style={line width=2.2pt}}
\def\hyper@x#1,#2\relax{#1}
\def\hyper@y#1,#2\relax{#2}
\def\hyper@coords#1{#1}
\newif\ifhyper@vertical
\def\hyper@computer#1#2{\edef\hyper@toscan{(#1)}
  \tikz@scan@one@point\hyper@coords\hyper@toscan
  \edef\hyper@sx{\the\pgf@x}
  \edef\hyper@sy{\the\pgf@y}
  \edef\hyper@toscan{(#2)}
  \tikz@scan@one@point\hyper@coords\hyper@toscan
  \edef\hyper@ex{\the\pgf@x}
  \edef\hyper@ey{\the\pgf@y}
  \pgfmathsetmacro{\hyper@mx}{(\hyper@ex + \hyper@sx)/2}
  \pgfmathsetmacro{\hyper@my}{(\hyper@ey + \hyper@sy)/2}
  \pgfmathsetmacro{\hyper@dx}{\hyper@ex - \hyper@sx}
  \pgfmathparse{\hyper@dx == 0 ? "\noexpand\hyper@verticaltrue" : "\noexpand\hyper@verticalfalse"}
  \pgfmathresult
  \ifhyper@vertical
  \edef\hyper@cmd{-- (\tikztotarget)}
  \else
  \pgfmathsetmacro{\hyper@dy}{\hyper@ey - \hyper@sy}
  \pgfmathsetmacro{\hyper@t}{\hyper@my/\hyper@dx}
  \pgfmathsetmacro{\hyper@cx}{\hyper@mx + \hyper@t * \hyper@dy}
  \pgfmathsetmacro{\hyper@radius}{veclen(\hyper@cx - \hyper@sx, \hyper@sy)}
  \pgfmathsetmacro{\hyper@sangle}{180 - atan2(\hyper@sy,\hyper@cx-\hyper@sx)}
  \pgfmathsetmacro{\hyper@eangle}{180 - atan2(\hyper@ey,\hyper@cx-\hyper@ex)}
  \edef\hyper@cmd{arc[radius=\hyper@radius pt, start angle=\hyper@sangle, end angle=\hyper@eangle]}
  \fi
}
\def\hyper@disc@computer#1#2{\edef\hyper@toscan{(#1)}
  \tikz@scan@one@point\hyper@coords\hyper@toscan
  \edef\hyper@sx{\the\pgf@x}
  \edef\hyper@sy{\the\pgf@y}
  \edef\hyper@toscan{(#2)}
  \tikz@scan@one@point\hyper@coords\hyper@toscan
  \edef\hyper@ex{\the\pgf@x}
  \edef\hyper@ey{\the\pgf@y}
  \pgfmathsetmacro{\hyper@det}{\hyper@sx * \hyper@ey - \hyper@sy * \hyper@ex}
  \pgfmathparse{\hyper@det == 0 ? "\noexpand\hyper@verticaltrue" : "\noexpand\hyper@verticalfalse"}
  \pgfmathresult
  \ifhyper@vertical
  \edef\hyper@cmd{-- (\tikztotarget)}
  \else
  \pgfmathsetmacro{\hyper@mx}{(\hyper@ex + \hyper@sx)/2}
  \pgfmathsetmacro{\hyper@my}{(\hyper@ey + \hyper@sy)/2}
  \pgfmathsetmacro{\hyper@dx}{\hyper@ex - \hyper@sx}
  \pgfmathsetmacro{\hyper@dy}{\hyper@ey - \hyper@sy}
  \pgfmathsetmacro{\hyper@dradius}{\pgfkeysvalueof{/tikz/hyperbolic disc radius}}
  \pgfmathsetmacro{\hyper@t}{((\hyper@dradius)^2 - \hyper@sx * \hyper@ex - \hyper@sy * \hyper@ey)/(2 * (\hyper@sx * \hyper@ey - \hyper@sy * \hyper@ex))}
  \pgfmathsetmacro{\hyper@radius}{sqrt((\hyper@t)^2 + .25) * veclen(\hyper@dx,\hyper@dy)}
  \pgfmathsetmacro{\hyper@cx}{\hyper@mx + \hyper@t * \hyper@dy}
  \pgfmathsetmacro{\hyper@cy}{\hyper@my - \hyper@t * \hyper@dx}
  \pgfmathsetmacro{\hyper@sangle}{atan2(\hyper@sy-\hyper@cy,\hyper@sx - \hyper@cx)}
  \pgfmathsetmacro{\hyper@eangle}{atan2(\hyper@ey-\hyper@cy,\hyper@ex - \hyper@cx)}
  \pgfmathsetmacro{\hyper@eangle}{\hyper@eangle > \hyper@sangle + 180 ? \hyper@eangle - 360 : \hyper@eangle}
  \edef\hyper@cmd{arc[radius=\hyper@radius pt, start angle=\hyper@sangle, end angle=\hyper@eangle]}
\fi
}
\def\hyper@plane@tangent#1#2{\edef\hyper@toscan{(#1)}
  \tikz@scan@one@point\hyper@coords\hyper@toscan
  \edef\hyper@sx{\the\pgf@x}
  \edef\hyper@sy{\the\pgf@y}
  \edef\hyper@toscan{(#2)}
  \tikz@scan@one@point\hyper@coords\hyper@toscan
  \edef\hyper@ex{\the\pgf@x}
  \edef\hyper@ey{\the\pgf@y}
\pgfmathsetmacro{\hyper@ex}{\hyper@ex - \hyper@sx}
  \pgfmathsetmacro{\hyper@ey}{\hyper@ey - \hyper@sy}
\pgfmathparse{\hyper@ex == 0 ? "\noexpand\hyper@verticaltrue" : "\noexpand\hyper@verticalfalse"}
  \pgfmathresult
  \ifhyper@vertical
\pgfmathsetmacro{\hyper@d}{\hyper@ey/1cm}
  \pgfmathsetmacro{\hyper@radius}{\hyper@sy * exp(\hyper@d) - \hyper@sy}
  \edef\hyper@cmd{-- ++(0,\hyper@radius pt)}
  \else
\pgfmathsetmacro{\hyper@d}{\hyper@ex > 0 ? veclen(\hyper@ex,\hyper@ey) : -veclen(\hyper@ex,\hyper@ey)}
\pgfmathsetmacro{\hyper@radius}{abs(\hyper@sy * \hyper@d / \hyper@ex)}
\pgfmathsetmacro{\hyper@sangle}{90 + atan(\hyper@ey/\hyper@ex)}
\pgfkeysgetvalue{/tikz/hyperbolic plane target angle}{\hyper@eangle}
  \ifx\hyper@eangle\pgfutil@empty
\pgfmathsetmacro{\hyper@d}{\hyper@d/1cm}
  \pgfmathsetmacro{\hyper@ey}{\hyper@ey/1cm}
  \pgfmathsetmacro{\hyper@tanhd}{tanh(\hyper@d)}
  \pgfmathsetmacro{\hyper@eangle}{acos((\hyper@d * \hyper@tanhd - \hyper@ey)/(\hyper@d - \hyper@ey * \hyper@tanhd))}
\fi
  \edef\hyper@cmd{arc[radius=\hyper@radius pt, start angle=\hyper@sangle, end angle=\hyper@eangle]}
\fi
}
\tikzset{hyperbolic disc radius/.initial={1cm},
  hyperbolic plane/.style={
    to path={
      \pgfextra{\hyper@computer\tikztostart\tikztotarget}
      \hyper@cmd
    }
  },
  hyperbolic plane tangent/.style={
    to path={
      \pgfextra{\hyper@plane@tangent\tikztostart\tikztotarget}
      \hyper@cmd
    }
  },
  hyperbolic disc/.style={
    to path={
      \pgfextra{\hyper@disc@computer\tikztostart\tikztotarget}
      \hyper@cmd
    }
  },
  hyperbolic plane target angle/.initial={},
}
\begin{document}

\begin{abstract}
Paquette and Yıldırım recently introduced triangulated categories of arcs in completed infinity-gons, which are discs with an infinite closed set of marked points on their boundary.
These categories have many features in common with the cluster categories associated to discs with different sets of marked points.
In particular, they have (weak) cluster-tilting subcategories, which Paquette--Yıldırım show are in bijection with very special triangulations of the disc.
This is in contrast to Igusa--Todorov's earlier work in the uncompleted case, in which every triangulation corresponds to a weak cluster-tilting subcategory.

In this paper, we replace the triangulated structure of Paquette--Yıldırım's category by an extriangulated substructure and prove that, with this structure, the weak cluster-tilting subcategories are once again in bijection with triangulations.
We further show that functorial finiteness of a weak cluster-tilting subcategory is equivalent to a very mild condition on the triangulation, which also appears in Çanakçı and Felikson's study of infinite rank cluster algebras from Teichmüller theory.
By comparison with the combinatorics of triangulations, we are also able to characterise when weak cluster-tilting subcategories can be mutated in this new extriangulated category.
\end{abstract}
\maketitle

\section{Introduction}

Paquette and Yıldırım \cite{PaqYil} introduced a triangulated category \(\PYcat_n\) associated to a disc \(\disc\) with a closed set \(\close{\pts}_n\) of infinitely many marked points in \(\bdry \disc\), such that \(\close{\pts}_n\) has \(n\) accumulation points, all of which are two-sided.
As a special case, the category \(\PYcat_1\) has also appeared in various other contexts and guises; see for example \cite{ACFGS1,ACFGS2,Fisher,Kalck-Thesis}.
The indecomposable objects of \(\PYcat_n\) are in one-to-one correspondence with arcs in \(\disc\) connecting points of \(\close{\pts}_n\), allowing Paquette--Yıldırım to classify the cluster-tilting subcategories of \(\PYcat_n\) in terms of triangulations.
However, only very few triangulations turn out to correspond to cluster-tilting subcategories (or even to weak cluster-tilting subcategories, which are not required to be functorially finite, cf.~\cite{GHJ}).
Indeed, the failure of \(\PYcat_n\) to be \(2\)-Calabi--Yau means that a triangulation corresponding to a cluster-tilting subcategory may contain at most one arc incident to each accumulation point, and no arcs incident to two accumulation points.

Thus the cluster-tilting theory of \(\PYcat_n\) appears to describe only a very small part of the combinatorics of triangulations of \(\disc\), in contrast to the categories associated by Igusa and Todorov to surfaces with discrete sets of marked points \cite{IguTod-CyclPoset}.
This stands in contrast with results in Teichmüller theory \cite{Penner-Book}, to which surface cluster algebras have a close relationship \cite{FST1, GSV-WeilPet, FocGon-HTT}.
In particular, triangulations of a finite marked surface define both a cluster in the associated surface cluster algebra and a coordinate system for the associated (decorated) Teichmüller space.
An analogous result for the case of infinitely many marked points is given by Çanakçı--Felikson \cite[Cor.~3.35]{CanFel}.
For analytic reasons, this result applies only to fan triangulations (see Definition~\ref{d:leapfrog}), but this class is much larger than that of triangulations corresponding to cluster-tilting subcategories in \(\PYcat_n\), and includes for example the triangulation shown in Figure~\ref{f:triang-non-PY-ct}.

\begin{figure}
\begin{tikzpicture}[scale=2.4,every to/.style={hyperbolic disc}]
\draw (270:1) to (90:1);
\foreach \t in {60,80,100,120,140}
{\draw (270:1) to (90+\t:1);
\draw (90-\t:1) to (90:1);}
\draw (270:1) to (130:1);
\draw (50:1) to (90:1);
\foreach \t in {-2,0,2}
{\draw (110+\t:0.95) node {\({\cdot}\)};
\draw (70+\t:0.95) node {\({\cdot}\)};
\draw (250+\t:0.95) node {\({\cdot}\)};
\draw (290+\t:0.95) node {\({\cdot}\)};}
\draw[gray,very thick] (0,0) circle(1);
\foreach \n in {1,...,2}
{ \draw (90-180*\n:1) circle(0.025) [fill=black];}
\end{tikzpicture}
\caption{A fan triangulation \(\tri\) of \((\disc,\close{\pts}_2)\) (with accumulation points shown as filled circles).
This triangulation does not correspond to a weak cluster-tilting subcategory in \(\PYcat_n\) with its original triangulated structure, due to the presence of the arc between two accumulation points, and multiple arcs incident with each accumulation point, but it does correspond to a cluster-tilting subcategory of \(\PYcat_n\) with the appropriate extriangulated structure.}
\label{f:triang-non-PY-ct}
\end{figure}

In this paper, we demonstrate that in fact \(\PYcat_n\) does encode the combinatorics of all triangulations of the marked surface \((\disc,\close{\pts}_n)\), and surprisingly even does so through cluster-tilting theory.
The key observation is that this is achieved not with the triangulated structure on \(\PYcat_n\), but by passing to an extriangulated substructure in the sense of Nakaoka and Palu \cite{NakPal}.
Roughly speaking, this corresponds to removing certain extensions from \(\PYcat_n\), while keeping the underlying additive category the same, thus allowing more objects and subcategories to become rigid.
\begin{thm*}[Corollary~\ref{c:wct}]
\label{t:wct}
The map \(\cat{T}\mapsto\indec{\cat{T}}\) is a bijection between weak cluster-tilting subcategories of \(\PYcat_n\), with the appropriate extriangulated structure, and triangulations of the disc \((\disc,\close{\pts}_n)\).
\end{thm*}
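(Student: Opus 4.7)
The proof plan is to translate the algebraic conditions defining a weak cluster-tilting subcategory into the combinatorial conditions defining a triangulation, using the bijection between indecomposable objects of $\PYcat_n$ and arcs in $(\disc, \close{\pts}_n)$. The central ingredient is a combinatorial description of the $\Ext^1$ groups in the extriangulated substructure: for indecomposables $M_\alpha, M_\beta$ corresponding to arcs $\alpha, \beta$, one needs $\Ext^1(M_\alpha, M_\beta) = 0$ precisely when $\alpha$ and $\beta$ do not cross. This characterisation is what motivates the choice of substructure and should be established (in prior sections) by identifying which distinguished triangles of $\PYcat_n$ are retained as conflations, selectively discarding those extensions that are responsible for the failure of rigidity at limit arcs.

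Granting this description, the forward direction runs as follows. Given a triangulation $\cat{T}$, form the additive subcategory $\add \cat{T} \subset \PYcat_n$ spanned by the indecomposables corresponding to its arcs. Rigidity $\Ext^1(\add\cat{T}, \add\cat{T}) = 0$ is immediate since the arcs of $\cat{T}$ pairwise do not cross. To confirm the weak cluster-tilting condition, I would take an indecomposable $X$ with $\Ext^1(X, \add\cat{T}) = \Ext^1(\add\cat{T}, X) = 0$ and observe that the corresponding arc $\alpha_X$ crosses no arc of $\cat{T}$; by maximality of the triangulation, $\alpha_X$ must already be an arc of $\cat{T}$, so $X \in \add\cat{T}$.

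Conversely, for a weak cluster-tilting subcategory $\cat{T}$, set $T := \{\alpha : M_\alpha \in \indec\cat{T}\}$. Rigidity of $\cat{T}$ forces the arcs of $T$ to be pairwise non-crossing, and the weak cluster-tilting condition forces $T$ to be maximal with this property: if some arc $\beta$ were compatible with all arcs in $T$ but not in $T$, then $M_\beta$ would satisfy the vanishing condition yet fail to lie in $\cat{T}$, contradicting weak cluster-tilting. Hence $T$ is a triangulation. The two assignments are mutually inverse by construction.

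The main obstacle is the combinatorial characterisation of extensions in the substructure. The original triangulated $\PYcat_n$ is not $2$-Calabi--Yau, and Paquette--Yıldırım's work shows that within the triangulated structure, certain non-crossing pairs involving limit arcs nevertheless have nonzero extensions, which is precisely why so few triangulations categorify in their setting. The whole purpose of passing to the extriangulated substructure is to prune exactly those obstructive extensions, while keeping enough conflations so that every genuine crossing still produces a nonzero extension. Verifying both halves of this biconditional, in particular for arcs incident to the accumulation points of $\close{\pts}_n$, is the delicate step; once achieved, the corollary itself is essentially a dictionary translation between arc combinatorics and the defining axioms of weak cluster-tilting.
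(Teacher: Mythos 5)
Your proposal is correct and follows essentially the same route as the paper: the entire weight rests on the combinatorial characterisation $\EE_{\rperp{\cat{D}}}(\gamma,\delta)\ne 0$ if and only if $\squeeze{\gamma}$ and $\squeeze{\delta}$ cross (the paper's Theorem~\ref{t:Ext-cross}), after which maximal rigid subcategories correspond to triangulations and are upgraded to weak cluster-tilting subcategories exactly as you describe. The only step you leave implicit is the reduction from arbitrary objects $X$ to indecomposable ones in the weak cluster-tilting condition, which the paper handles via the Krull--Schmidt property and the symmetry of $\EE_{\rperp{\cat{D}}}$ (Proposition~\ref{p:max-rigid-to-wct}, Remark~\ref{r:KS}, Corollary~\ref{c:w2cy}).
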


Intriguingly, the definition of this extriangulated structure on \(\PYcat_n\) arises naturally from its description as the Verdier quotient of a different triangulated category, first introduced by Igusa and Todorov \cite{IguTod-CyclPoset}.
Indeed, we show in Section~\ref{s:substruct} how to produce two extriangulated substructures, called \(\EE_{\lperp{\cat{D}}}\) and \(\EE_{\rperp{\cat{D}}}\), on the Verdier quotient \(\overline{\cat{C}}=\cat{C}/\cat{D}\) of a triangulated category \(\cat{C}\) by a thick subcategory \(\cat{D}\)---these two substructures coincide when \(\cat{C}\) is Calabi--Yau.
The resulting extriangulated structure depends on the input data \(\cat{D}\subseteq\cat{C}\) for the Verdier quotient construction, and not just the quotient category \(\overline{\cat{C}}\) itself up to triangle equivalence, and so in particular it is usually different from Verdier's triangulated structure \cite{Verdier-These} on \(\overline{\cat{C}}\).

Having recovered the dictionary between triangulations and weak cluster-tilting subcategories (Section~\ref{s:ct-subcats}), we continue by translating further phenomena between the two languages.
In Section~\ref{s:approx} we determine the triangulations for which the corresponding subcategory is cluster-tilting (i.e.\ functorially finite).
These turn out to be precisely the fan triangulations, used by Çanakçı--Felikson \cite{CanFel} to give coordinates on the associated decorated Teichmüller space.

\begin{thm*}[Theorem~\ref{t:ct=fan}]
The map from Theorem~\ref{t:wct} restricts to a bijection between cluster-tilting subcategories of \(\PYcat_n\), with the appropriate extriangulated structure, and fan triangulations of the disc \((\disc,\close{\pts}_n)\).
\end{thm*}

Finally, in Section~\ref{s:mutation}, we show that our bijection between weak cluster-tilting subcategories and triangulations is compatible with mutation.
In contrast to surfaces with finitely many marked points, an arc may be mutable (i.e.\ flippable) in one triangulation but not in another, and we explain how this feature is reflected categorically in terms of the existence of certain approximations.
\begin{thm*}[Theorem~\ref{t:mutability}]
\label{t*:mutability}
Let \(\cat{T}\) be a cluster-tilting subcategory of \(\PYcat_n\) (with the appropriate extriangulated structure), and let \(\alpha\in\indec{\cat{T}}\) be an arc. Then the following are equivalent:
\begin{enumerate}
\item \(\cat{T}\) is mutable at \(\alpha\);
\item \(\alpha\) is the diagonal of a quadrilateral in the corresponding triangulation;
\item\label{t*:mutability-approx} \(\alpha\) admits left and right approximations by \(\cat{T}\setminus\{\alpha\}=\add(\indec(\cat{T})\setminus\{\alpha\})\).
\end{enumerate}
\end{thm*}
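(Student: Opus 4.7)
The plan is to deduce the result by establishing the cyclic implications \textup{(b) $\Rightarrow$ (c) $\Rightarrow$ (a) $\Rightarrow$ (b)}, using Theorem~\ref{t:wct} and Theorem~\ref{t:ct=fan} to translate freely between $\cat{T}$ and its corresponding fan triangulation $\tri$ of $(\disc,\close{\pts}_n)$, with $\alpha$ viewed both as an indecomposable summand and as an arc.

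For \textup{(b) $\Rightarrow$ (c)}, suppose $\alpha$ is the diagonal of a quadrilateral in $\tri$ with sides $\beta_1,\beta_2$ meeting at one endpoint of $\alpha$ and $\gamma_1,\gamma_2$ at the other (allowing degenerations if an endpoint is an accumulation point that still admits well-defined adjacent arcs). Because $\tri$ is a fan triangulation, these are the \emph{unique} arcs of $\tri$ incident with $\alpha$'s endpoints that are geometrically closest to $\alpha$ on each side. Using the description of morphisms in $\PYcat_n$ in terms of arc crossings and rotations (from \cite{PaqYil}, and which carries over to our extriangulated substructure since we have only restricted the class of conflations), the morphisms $\beta_1\dsum\beta_2\to\alpha$ and $\alpha\to\gamma_1\dsum\gamma_2$ coming from the geometric picture are right and left $(\cat{T}\setminus\{\alpha\})$-approximations respectively: any morphism from another arc in $\indec\cat{T}\setminus\{\alpha\}$ to $\alpha$ must, by the fan structure at the endpoint, factor through one of $\beta_1,\beta_2$, and similarly on the other side.

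For \textup{(c) $\Rightarrow$ (a)}, this is essentially the extriangulated analogue of Iyama--Yoshino mutation, available in the framework of \cite{NakPal,IyaYos}. Given left and right approximations of $\alpha$, the fact that $\cat{T}$ is rigid in our extriangulated substructure means that completing the approximations to conflations yields exchange conflations, and hence a mutation of $\cat{T}$ at $\alpha$.

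For \textup{(a) $\Rightarrow$ (b)}, I argue the contrapositive: if $\alpha$ is not the diagonal of a quadrilateral, then on at least one side of $\alpha$, the arcs of $\tri$ incident with the endpoints of $\alpha$ accumulate toward $\alpha$ without a nearest neighbour. Translating via the morphism description, a right approximation $T\to\alpha$ would need to dominate morphisms from arbitrarily close arcs in this accumulating family, forcing $T$ to contain infinitely many non-isomorphic indecomposable summands, so no such approximation exists in $\add(\indec\cat{T}\setminus\{\alpha\})$. By \textup{(c) $\Rightarrow$ (a)} applied in reverse (the exchange conflation would have to give such an approximation), $\cat{T}$ cannot be mutated at $\alpha$.

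The main obstacle is the combinatorial geometry in the implication \textup{(a) $\Rightarrow$ (b)}: one must carefully analyse the fan structure of $\tri$ near an endpoint of $\alpha$ to locate accumulation phenomena, and then translate this into a concrete non-existence statement for approximations in the extriangulated category. The payoff is that once this dictionary is in place, mutability becomes a visibly local combinatorial condition on the triangulation, matching the classical picture for finite marked surfaces.
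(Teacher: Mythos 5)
Your cyclic route (b)~$\Rightarrow$~(c)~$\Rightarrow$~(a)~$\Rightarrow$~(b) has two genuine problems, both stemming from the same issue.

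First, the step (c)~$\Rightarrow$~(a) is justified by invoking ``the extriangulated analogue of Iyama--Yoshino mutation.'' This is exactly the shortcut the paper is careful \emph{not} to take: as noted in the remark after Theorem~\ref{t:mutability}, the Iyama--Yoshino argument (and its standard generalisation) requires a Frobenius (ex)triangulated category, and $(\PYcat_n,\EE_{\rperp{\cat{D}}})$ is not Frobenius --- it has essentially no projectives or injectives, as discussed in Remark~\ref{r:2CY}. So the existence of left and right approximations does not, by a black-box citation, produce a mutation. Second, your (a)~$\Rightarrow$~(b) step actually proves $\neg$(b)~$\Rightarrow$~$\neg$(c) and then appeals to ``(c)~$\Rightarrow$~(a) applied in reverse,'' which is logically the converse (a)~$\Rightarrow$~(c), not something that follows from (c)~$\Rightarrow$~(a). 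The parenthetical justification (``the exchange conflation would have to give such an approximation'') presupposes that mutability produces exchange conflations, which again is the unavailable Iyama--Yoshino machinery. As written, your three implications give (b)~$\Leftrightarrow$~(c) and (c)~$\Rightarrow$~(a), but not the missing direction (a)~$\Rightarrow$~(b) or (a)~$\Rightarrow$~(c), so the cycle is not closed even granting the Iyama--Yoshino step.

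The paper sidesteps both issues by proving (a)~$\Leftrightarrow$~(b) \emph{directly and purely combinatorially}: by Corollary~\ref{c:wct}, mutability of $\cat{T}$ at $\alpha$ is exactly the statement that there is a unique arc $\alpha'\ne\alpha$ with $\tri\setminus\{\alpha\}\cup\{\alpha'\}$ a triangulation, and a short argument with crossing arcs shows this is equivalent to $\alpha$ being the diagonal of a quadrilateral in $\tri$. Only afterwards is (b) shown equivalent to (c), with the nontrivial direction (c)~$\Rightarrow$~(b) resting on Lemma~\ref{l:next-arc}, which converts the existence of an approximation into the existence of a maximal/minimal ``next arc'' at each endpoint. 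The existence of exchange conflations is then a \emph{consequence} of the theorem, observed in the subsequent remark, not an ingredient of the proof. One further minor point: your appeal to the fan hypothesis in (b)~$\Rightarrow$~(c) is harmless given the statement you were asked to prove, but the paper's Theorem~\ref{t:mutability} holds for arbitrary weak cluster-tilting subcategories, and the approximation constructed from the quadrilateral does not need the triangulation to be fan.
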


The failure of an arc \(\alpha\) to be mutable in a given triangulation \(\tri\) is meaningful in Teichmüller theory, since this happens precisely when the lambda length of the arc is the limit of a sequence of lambda lengths of other arcs in \(\tri\) \cite[\S3.2]{CanFel}; that is, when \(\alpha\) is a limit arc in \(\tri\) \cite[Defn.~2.5]{CanFel}. This is reflected categorically in Theorem~\ref{t*:mutability}\ref{t*:mutability-approx}, which implies that any attempt to approximate a non-mutable arc \(\alpha\) by arcs from \(\cat{T}\setminus\{\alpha\}\) may always be improved, leading to such an infinite sequence.
The notion of limit arc here, which depends on the triangulation \(\tri\), is different from that of Paquette--Yıldırım, who use the term to refer to any arc incident with an accumulation point---such arcs can be mutable or not, depending on the rest of the triangulation.

We will further explore connections to Teichmüller theory and Çanakçı--Felikson's infinite rank surface cluster algebras (for discs) in a sequel \cite{CKaP2} to this article, in which we will use our extriangulated structure to describe a cluster character on \(\PYcat_n\) which computes formal lambda lengths of arcs.

\section*{Acknowledgements}
This project began during the Junior Trimester Programme \emph{New Trends in Representation Theory} at the Hausdorff Institute for Mathematics in Bonn, Germany, in autumn 2020.
We thank Gustavo Jasso and Jan Schröer for organising the programme, and the Hausdorff Institute for financial support and an inspiring environment, especially under the difficult circumstances at the time.
We further thank Jenny August, Sofia Franchini, Marina Godinho, Mikhail Gorsky, Sira Gratz, Martin Herschend, Gustavo Jasso, Carlo Klapproth, Dave Murphy, Yann Palu, Charles Paquette, Pierre-Guy Plamondon, Amit Shah and Emine Yıldırım for useful conversations at various stages of this project, and the anonymous referee for their helpful comments and corrections.

M.K.\ was partially funded by the Deutsche Forschungsgemeinschaft (DFG, German Research Foundation), Projektnummern 496500943; 201167725.
He is also grateful to Michael Wemyss for inviting him for a research stay in Glasgow and to Peter Jørgensen for an invitation to Aarhus, where parts of this work were done.
M.P.\ was supported by the EPSRC Postdoctoral Fellowship EP/T001771/2, and ERC Consolidator Grant 101001227 (MMiMMa).
He also thanks VU Amsterdam and Universität Graz for funding research visits in 2023. Parts of this work were done at the \emph{Cluster algebras and representation theory} programme in 2021 at the Isaac Newton Institute for Mathematical Sciences (supported by EPSRC grant no EP/R014604/1).

\section{Extriangulated structures on quotients}
\label{s:substruct}

Let \(\cat{C}\) be a triangulated category with thick subcategory \(\cat{D}\).
Then one can form the Verdier quotient \(\cat{C}/\cat{D}\), which is again a triangulated category.
In this section we will give various recipes for equipping \(\cat{C}/\cat{D}\) with the structure of an extriangulated category, in the sense of Nakaoka and Palu \cite{NakPal}, by discarding some of its distinguished triangles.

We begin by recalling the fundamental notions related to Verdier quotients of triangulated categories.

\begin{defn}[{\cite[\S2]{Verdier-These}, see also \cite[Thm.~2.1.8]{Neeman-TriCats}}]
Let \(\cat{C}\) be a triangulated category and let \(\cat{D}\subset\cat{C}\) be a thick subcategory.
A morphism \(f\in\cat{C}\) is a \emph{\(\cat{D}\)-quasi-isomorphism} if its mapping cone lies in \(\cat{D}\), and the \emph{Verdier quotient} (or \emph{Verdier localisation}) \(\cat{C}/\cat{D}\) of \(\cat{C}\) by \(\cat{D}\) is the localisation of \(\cat{C}\) in the class of \(\cat{D}\)-quasi-isomorphisms.
\end{defn}

The quotient \(\cat{C}/\cat{D}\) naturally inherits the structure of a triangulated category from \(\cat{C}\), and there is a canonical triangle functor \(\loc\colon\cat{C}\to\cat{C}/\cat{D}\).
The map on objects underlying \(\loc\) is the identity, and so we will often suppress \(\loc\) in the notation when it is applied to an object.
However, \(\loc\) is typically neither full nor faithful, and so we will emphasise this functor when it is applied to a morphism.
Indeed, an important property for us is that \(\ker{\loc}\) is precisely the ideal \((\cat{D})\) consisting of morphisms of \(\cat{C}\) which factor over an object of \(\cat{D}\) \cite[Prop.~4.6.2(2)]{Krause-Loc}.

\begin{defn}
Let \(\cat{A}\) be an additive category and \(\cat{B}\) a subcategory.
We write
\begin{align*}
\lperp{\cat{B}}&=\{X\in\cat{A}:\text{\(\Hom_{\cat{A}}(X,B)=0\) for all \(B\in\cat{B}\)}\},\\
\rperp{\cat{B}}&=\{X\in\cat{A}:\text{\(\Hom_{\cat{A}}(B,X)=0\) for all \(B\in\cat{B}\)}\},
\end{align*}
and call these the \emph{left}, respectively \emph{right}, \emph{perpendicular subcategories} to \(\cat{B}\).
\end{defn}

The next proposition follows from the description of morphisms in \(\cat{C}/\cat{D}\) via a calculus of fractions.

\begin{prop}[{\cite[Prop.~2.3.3]{Verdier-These}}]
\label{p:Dperp}
Let \(\cat{C}\) be a triangulated category with thick subcategory \(\cat{D}\) and let \(X,Y\in\cat{C}\).
Then \(\loc\colon \Hom_{\cat{C}}(X,Y)\to \Hom_{\cat{C}/\cat{D}}(X,Y)\) is an isomorphism whenever \(X\in\lperp{\cat{D}}\) or \(Y\in\rperp{\cat{D}}\).
\end{prop}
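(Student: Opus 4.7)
The plan is to use the standard calculus of (left) fractions describing morphisms in the Verdier quotient, combined with the fact recalled just before the statement that the kernel of $\loc$ on morphisms is precisely the ideal $(\cat{D})$ of morphisms factoring through an object of $\cat{D}$.

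Injectivity is immediate from the assumption. If $g\colon X\to Y$ satisfies $\loc(g)=0$, then $g=\beta\circ\alpha$ for some factorisation $X\xrightarrow{\alpha}D\xrightarrow{\beta}Y$ with $D\in\cat{D}$. If $X\in\lperp{\cat{D}}$ then $\alpha=0$; if $Y\in\rperp{\cat{D}}$ then $\beta=0$; in either case $g=0$.

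For surjectivity I would represent an arbitrary morphism $X\to Y$ in $\cat{C}/\cat{D}$ by a left roof $X\xleftarrow{s}Z\xrightarrow{f}Y$, where $s$ is a $\cat{D}$-quasi-isomorphism, and show it is equivalent to a roof of the form $X\xleftarrow{\id}X\xrightarrow{g}Y$. Complete $s$ to a triangle
\[Z\xrightarrow{s}X\xrightarrow{p}C\xrightarrow{\delta}Z[1]\]
with $C\in\cat{D}$, noting that $C[-1]\in\cat{D}$ since $\cat{D}$ is thick. In the case $Y\in\rperp{\cat{D}}$, the composition $f\circ\delta[-1]\colon C[-1]\to Y$ vanishes, so $f$ factors as $f=g\circ s$ for some $g\colon X\to Y$; this $g$ witnesses equivalence of the roofs via the obvious commutative diagram with vertex $X$ (mapping by $\id$ to $X$ and by $s$ to $Z$). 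Dually, in the case $X\in\lperp{\cat{D}}$, the map $p\colon X\to C$ is zero, so the triangle splits and one obtains a section $i\colon X\to Z$ with $s\circ i=\id_X$; then $g\defeq f\circ i$ exhibits the required equivalence of roofs. (Alternatively, one may work with right roofs $X\to W\leftarrow Y$ in the second case, symmetrically to the first.)

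I do not expect a substantive obstacle: the argument is essentially a direct unpacking of the calculus of fractions, and the perpendicularity hypothesis is exactly what is needed to kill either the connecting map $\delta$ or the projection $p$ in the defining triangle of the $\cat{D}$-quasi-isomorphism $s$. The only mild care required is bookkeeping to verify that the constructed $g$ really does define a roof equivalent to $(s,f)$, which is routine diagram-chasing.
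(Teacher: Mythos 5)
The paper does not prove this proposition; it is cited from Verdier's thesis, with the remark immediately preceding it that the result ``follows from the description of morphisms in $\cat{C}/\cat{D}$ via a calculus of fractions.'' Your argument is precisely that standard proof, correctly executed: injectivity from $\ker\loc=(\cat{D})$ and the perpendicularity hypothesis killing one leg of any factorisation through $\cat{D}$, and surjectivity by completing the denominator $s$ of a fraction to a triangle with cone $C\in\cat{D}$ and using $\Hom(C[-1],Y)=0$ (resp.\ $\Hom(X,C)=0$) to replace the roof $(s,f)$ by $(\id_X,g)$. The only slip is a typo in the parenthetical describing the comparison of roofs in the $Y\in\rperp{\cat{D}}$ case: the common refinement has vertex $Z$ (not $X$), mapping by $\id$ to $Z$ and by $s$ to $X$; the substance of the argument is unaffected.
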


\begin{cor}
\label{c:perp-equiv}
The Verdier quotient functor \(\loc\colon\cat{C}\to\cat{C}/\cat{D}\) restricts to an equivalence from \(\lperp{\cat{D}}\) to its essential image in \(\cat{C}/\cat{D}\), and similarly for \(\rperp{\cat{D}}\).
\end{cor}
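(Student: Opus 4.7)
The plan is straightforward: a functor is an equivalence onto its essential image if and only if it is fully faithful, so it suffices to show that $\loc$ is full and faithful when restricted to $\lperp{\cat{D}}$ (and dually to $\rperp{\cat{D}}$).

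First, I would observe that for any two objects $X,Y \in \lperp{\cat{D}}$, the hypothesis ``$X\in\lperp{\cat{D}}$'' of Proposition~\ref{p:Dperp} is satisfied, so the map
\[
\loc\colon \Hom_{\cat{C}}(X,Y) \longrightarrow \Hom_{\cat{C}/\cat{D}}(\loc X, \loc Y)
\]
is an isomorphism of abelian groups. This is precisely full faithfulness of the restricted functor $\loc\big|_{\lperp{\cat{D}}}$. (One could equally well invoke the dual condition ``$Y\in\rperp{\cat{D}}$'' in the case of $\rperp{\cat{D}}$.)

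Next, I would note that by construction the essential image of $\loc\big|_{\lperp{\cat{D}}}$ in $\cat{C}/\cat{D}$ is, tautologically, a replete subcategory onto which this functor is essentially surjective. Combining this with full faithfulness yields the required equivalence of categories. The $\rperp{\cat{D}}$ statement is obtained by the entirely analogous argument using the second clause of Proposition~\ref{p:Dperp}.

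There is no real obstacle here; the result is essentially an immediate consequence of Proposition~\ref{p:Dperp}, together with the elementary categorical fact that a fully faithful functor restricts to an equivalence onto its essential image. The only thing to be mildly careful about is the distinction between $X$ as an object of $\cat{C}$ and as an object of $\cat{C}/\cat{D}$, which is why the statement is phrased in terms of essential image rather than as an equivalence with a pre-specified subcategory of $\cat{C}/\cat{D}$.
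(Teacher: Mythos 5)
Your proof is correct and is exactly the argument the paper intends (the corollary is stated without proof precisely because it follows immediately from Proposition~\ref{p:Dperp} in the way you describe): full faithfulness on $\lperp{\cat{D}}$ comes from the case $X\in\lperp{\cat{D}}$ of the proposition, full faithfulness on $\rperp{\cat{D}}$ from the case $Y\in\rperp{\cat{D}}$, and a fully faithful functor is an equivalence onto its essential image.
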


Because of Corollary~\ref{c:perp-equiv} (and the fact that \(\cat{C}\) and \(\cat{C}/\cat{D}\) have the same objects), we typically abuse notation and write \(\lperp{\cat{D}}\) and \(\rperp{\cat{D}}\) also for their essential images under \(\loc\).
Note that when \(\cat{C}\) admits a Serre functor \(\Serre\) which preserves \(\cat{D}\), we have \(\lperp{\cat{D}}=\rperp{\cat{D}}\) since
\[\Hom_{\cat{C}}(D,X)=\Kdual{\Hom_{\cat{C}}(X,\Serre D)}\]
for all \(X\in\cat{C}\) and \(D\in\cat{D}\).
This holds in particular if \(\cat{C}\) is \(n\)-Calabi--Yau, meaning that \(\Serre=\susp^n\), because \(\cat{D}\) is a triangulated subcategory of \(\cat{C}\) and hence preserved by \(\susp\).

The general theory of extriangulated categories is technical, but because all of our examples will be obtained from triangulated categories by taking a substructure, we can give a simplified definition that deals only with this situation.

\begin{defn}
\label{d:substr}
Let \(\cat{C}\) be a triangulated category with suspension \(\susp\), and let \(\EE\leq\Hom_{\cat{C}}(\blank,\susp\blank)\) be a subfunctor.
For each triangle
\[\begin{tikzcd}
X\arrow{r}{f}&Y\arrow{r}{g}&Z\arrow{r}{\delta}&\susp X
\end{tikzcd}\]
of \(\cat{C}\) with \(\delta\in\EE(Z,X)\), we call the map \(f\) an \emph{\(\EE\)-inflation}, the map \(g\) an \emph{\(\EE\)-deflation}, and the triple \((f,g,\delta)\) an \emph{\(\EE\)-conflation}.
When \(\EE\) is implicit, we will refer simply to inflations, deflations and conflations.
We say that \(\EE\) is an \emph{extriangulated substructure} for \(\cat{C}\) if the sets of \(\EE\)-inflations and \(\EE\)-deflations are each closed under composition.
The notation
\[\begin{tikzcd}
X\arrow[infl]{r}{f}&Y\arrow[defl]{r}{g}&Z\arrow[confl]{r}{\delta}&\phantom{}
\end{tikzcd}\]
indicates that \(f\) is an inflation, \(g\) is a deflation, and \((f,g,\delta)\) is a conflation.
\end{defn}

\begin{rem}
\label{r:HLN}
A result of Herschend, Liu and Nakaoka \cite[Prop.~3.16]{HLN1} (specialised to \(n=1\)) states that \(\EE\)-inflations are closed under composition if and only if \(\EE\)-deflations are closed under composition, so it suffices to check one of the two conditions in practice.
Note that, unlike the set of distinguished triangles, the set of \(\EE\)-conflations need not be closed under rotation.
Moreover, inflations need not be monomorphisms, and deflations need not be epimorphisms.

The same result of \cite{HLN1} implies that extriangulated substructures as in Definition~\ref{d:substr} coincide with those in the full, technical sense of Nakaoka and Palu: that is, all extriangulated structures \((\EE,\mathfrak{s})\) on \(\cat{C}\) such that \(\EE\leq\Hom_{\cat{C}}(\blank,\susp\blank)\), and \(\mathfrak{s}\) is restricted from the standard realisation map completing morphisms to triangles, are obtained in this way.
\end{rem}

The following construction will be our main tool for producing extriangulated substructures on triangulated categories.
It is a special case of \cite[Eg.~3.5(3)]{BalSte} (see \cite[Appendix]{Sakai} for the translation to the language of extriangulated substructures), but we give the proof for convenience.

Let \(F\colon\cat{C}\to\cat{D}\) be a triangle functor.
For each \(X,Z\in\cat{C}\), define \(\EE_F(Z,X)\leq\Hom(Z,\susp X)\) to be the set of morphisms \(\delta\colon Z\to\susp X\) for which \(F\delta=0\).
Recall that every monomorphism or epimorphism in a triangulated category is split.

\begin{lem}
\label{l:functor-construction}
The functor \(\EE_F\) defines an extriangulated substructure on \(\cat{C}\).
A morphism \(f\colon X\to Y\) in \(\cat{C}\) is an \(\EE_F\)-inflation if and only if \(Ff\) is a monomorphism in \(\cat{D}\). Dually, a morphism \(g\colon Y\to Z\) is an \(\EE_F\)-deflation if and only if \(Fg\) is an epimorphism in \(\cat{D}\).
\end{lem}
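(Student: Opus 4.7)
The plan is to first verify the characterisations of $\EE_F$-inflations and $\EE_F$-deflations in terms of $F$, and then deduce that $\EE_F$ is an extriangulated substructure as an easy consequence.

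First I would check that $\EE_F$ is indeed a subfunctor of $\Hom_{\cat{C}}(\blank,\susp\blank)$, which is essentially automatic: given $\delta\colon Z\to\susp X$ with $F\delta=0$, and any $a\colon Z'\to Z$, $b\colon X\to X'$, functoriality of $F$ yields $F((\susp b)\delta a)=(\susp Fb)(F\delta)(Fa)=0$, and similarly $\EE_F(Z,X)$ is closed under addition. Thus $\EE_F$ is a well-defined additive subfunctor.

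Next I would establish the stated characterisation. Given a triangle
\[
\begin{tikzcd}
X\arrow{r}{f}&Y\arrow{r}{g}&Z\arrow{r}{\delta}&\susp X
\end{tikzcd}
\]
in $\cat{C}$, applying the triangle functor $F$ yields a triangle in $\cat{D}$. A standard fact in triangulated categories is that $F\delta=0$ if and only if $Ff$ is a split monomorphism, which happens if and only if $Fg$ is a split epimorphism. Since every monomorphism in a triangulated category is split (and likewise for epimorphisms), this is equivalent to $Ff$ being a monomorphism, equivalently to $Fg$ being an epimorphism. Now, for an arbitrary morphism $f\colon X\to Y$, completing $f$ to a triangle in $\cat{C}$ is unique up to (non-canonical) isomorphism, so the condition that \emph{some} such triangle has $\delta\in\EE_F(Z,X)$ depends only on $f$, and is equivalent to $Ff$ being a monomorphism. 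This gives the characterisation of $\EE_F$-inflations, and the deflation statement is dual.

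Finally, closure under composition follows immediately: if $f_1\colon X\to Y$ and $f_2\colon Y\to Z$ are $\EE_F$-inflations then $Ff_1$ and $Ff_2$ are monomorphisms in $\cat{D}$, hence so is their composite $F(f_2 f_1)=(Ff_2)(Ff_1)$, making $f_2 f_1$ an $\EE_F$-inflation. By Remark~\ref{r:HLN} it then suffices to check this one condition, and so $\EE_F$ is an extriangulated substructure on $\cat{C}$.

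I do not anticipate a substantive obstacle; the only mildly delicate point is invoking the correct equivalence between $F\delta=0$ and $Ff$ being a (split) monomorphism, which is classical and can be cited from a standard reference on triangulated categories if desired.
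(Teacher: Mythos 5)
Your proof is correct and follows essentially the same approach as the paper: verify $\EE_F$ is a subfunctor (via $\ker F$ being an ideal), apply the triangle functor $F$ to a triangle realising $\delta$, and use the standard equivalence $F\delta=0\iff Ff\text{ split mono}\iff Fg\text{ split epi}$, then deduce closure under composition from the corresponding fact for monomorphisms. The small extra step you include — observing that the cone of $f$ is unique up to isomorphism so the inflation condition is well-posed — is correct and is handled implicitly in the paper; otherwise the two arguments coincide, with the paper also noting both inflation and deflation closure directly where you instead invoke Remark~\ref{r:HLN}.
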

\begin{proof}
Observe that \(\EE_F(\blank,\blank)\) is a subfunctor of \(\Hom(\blank,\susp\blank)\) because \(\ker{F}\) is an ideal of \(\cat{C}\).
Let
\[\begin{tikzcd}
X\arrow{r}{f}&Y\arrow{r}{g}&Z\arrow{r}{\delta}&\susp X
\end{tikzcd}\]
be a triangle of \(\cat{C}\).
Since \(F\) is a triangle functor, 
\[\begin{tikzcd}
FX\arrow{r}{Ff}&FY\arrow{r}{Fg}&FZ\arrow{r}{F\delta}&F\susp X
\end{tikzcd}\]
is a triangle of \(\cat{D}\).
Thus, \(Ff\) is a monomorphism, and \(Fg\) an epimorphism, if and only if \(F\delta=0\), i.e.\ if and only if \(\delta\in\EE_F(Z,X)\).
This demonstrates that the \(\EE_F\)-inflations and \(\EE_F\)-deflations are exactly as claimed.
These are closed under composition since this is true of monomorphisms and epimorphisms, and hence \(\EE_F\) is an extriangulated substructure for \(\cat{C}\).
\end{proof}

A much more general version of Lemma~\ref{l:functor-construction}, concerning functors between arbitrary extriangulated categories, is provided in Appendix~\ref{appendix}.
This general statement (Proposition~\ref{p:extri-pullback}) implies that \(F\) determines an extriangulated functor \cite[Def.~3.15]{BTHSS} from \(\cat{C}\) with the substructure \(\EE_F\) to \(\cat{D}\) with the split exact structure, and that \(\EE_F\) is the maximal extriangulated substructure on \(\cat{C}\) having this property.
However, the simplified statement and proof of Lemma~\ref{l:functor-construction} will be sufficient for our main applications in this paper.

\begin{defn}
\label{d:subcat-substr}
Given a triangulated category \(\cat{C}\) with thick subcategory \(\cat{D}\), define
\[\EE_\cat{D}(Z,X)=(\cat{D})(Z,\susp X).\]
That is, \(\EE_{\cat{D}}(Z,X)\) is the subspace of \(\Hom_{\cat{D}}(Z,\susp X)\) consisting of morphisms factoring over the subcategory \(\cat{D}\).
\end{defn}

\begin{cor}
\label{c:subcat-substr}
Let \(\cat{C}\) be a triangulated category with thick subcategory \(\cat{D}\).
Then \(\EE_{\cat{D}}\) is an extriangulated substructure of \(\cat{C}\).
\end{cor}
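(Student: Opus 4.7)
The plan is to derive this corollary directly from Lemma~\ref{l:functor-construction} by choosing the right functor. Specifically, I would apply the lemma with $F$ taken to be the Verdier localisation functor $\loc\colon\cat{C}\to\cat{C}/\cat{S}$, which is a triangle functor since $\cat{S}$ is thick.

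The key identification is that $\EE_{\cat{S}} = \EE_{\loc}$ as subfunctors of $\Hom_{\cat{C}}(\blank,\susp\blank)$. This is exactly the content of the fact, recalled in the discussion preceding Proposition~\ref{p:Dperp}, that $\ker{\loc} = (\cat{S})$: a morphism $\delta\colon Z\to\susp X$ factors through some object of $\cat{S}$ if and only if $\loc\delta = 0$ in $\cat{C}/\cat{S}$. Once this identification is made, Lemma~\ref{l:functor-construction} immediately yields that $\EE_{\cat{S}}$ is an extriangulated substructure of $\cat{C}$, completing the proof.

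There is no real obstacle here; the statement is essentially a translation of the previous lemma via the characterisation of the kernel of the Verdier quotient functor. The only minor point worth noting in the write-up is to cite (or briefly justify) the fact $\ker{\loc}=(\cat{S})$, which is already referenced in the paper via \cite[Prop.~4.6.2(2)]{Krause-Loc}. Optionally, one could also remark that the $\EE_{\cat{S}}$-inflations (respectively $\EE_{\cat{S}}$-deflations) are precisely those morphisms of $\cat{C}$ whose image in $\cat{C}/\cat{S}$ is a split monomorphism (respectively split epimorphism), since every monomorphism or epimorphism in a triangulated category is split; this concrete description will likely be useful later in the paper.
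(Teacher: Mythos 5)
Your proposal matches the paper's own proof exactly: apply Lemma~\ref{l:functor-construction} to the Verdier localisation $\loc\colon\cat{C}\to\cat{C}/\cat{S}$ and use $\ker\loc=(\cat{S})$ to identify $\EE_{\cat{S}}$ with $\EE_{\loc}$. The additional observation about the concrete description of the inflations and deflations is correct and consistent with Lemma~\ref{l:functor-construction}, though not needed for the corollary itself.
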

\begin{proof}
This follows directly from Lemma~\ref{l:functor-construction} applied to the Verdier quotient functor \(\loc\colon\cat{C}\to\cat{C}/\cat{D}\), observing that \(\EE_{\cat{D}}=\EE_{\loc}\) since \(\ker{\loc}=(\cat{D})\) \cite[Prop.~4.6.2(2)]{Krause-Loc}.
\end{proof}

In practice, we will use Corollary~\ref{c:subcat-substr} as follows.
Given a triangulated category \(\cat{C}\) with thick subcategory \(\cat{D}\), the Verdier quotient \(\overline{\cat{C}}=\cat{C}/\cat{D}\) has thick subcategories \(\lperp{\cat{D}}\) and \(\rperp{\cat{D}}\) (viewed as subcategories of \(\overline{\cat{C}}\) by Corollary~\ref{c:perp-equiv}).
By Corollary~\ref{c:subcat-substr}, we therefore find extriangulated substructures \(\EE_{\lperp{\cat{D}}}\) and \(\EE_{\rperp{\cat{D}}}\) on \(\overline{\cat{C}}\) (as well as the substructure \(\EE_{\cat{D}}\) on \(\cat{C}\) itself).
When \(\cat{C}\) is Calabi--Yau, as in our main application, we have \(\lperp{\cat{D}}=\rperp{\cat{D}}\), and so the two substructures on \(\overline{\cat{C}}\) coincide---we will use the notation \(\EE_{\rperp{\cat{D}}}\).

\section{Cluster-tilting and triangulations}
\label{s:ct-subcats}

Cluster-tilting subcategories are fundamental to the categorical and representation-theoretic approach to cluster algebras and related structures.
For triangulated categories, this goes back to foundational work of Buan, Marsh, Reineke, Reiten and Todorov \cite{BMRRT}.
We begin by recalling the relevant definitions.

\begin{defn}
Let \(\cat{T}\) be a \(\KK\)-linear additive category.
A \emph{\(\cat{T}\)-module} is a contravariant functor \(M\colon\cat{T}\to\Modcat{\KK}\).
We say that \(M\) is \emph{finitely generated} if there is an epimorphism \(\Hom_{\cat{T}}(\blank,T)\to M\) for some \(T\in\cat{T}\).

If \(\cat{T}\) is a full subcategory of a second \(\KK\)-linear additive category \(\cat{C}\), then we write \(\Hom_{\cat{C}}(\cat{T},X)=\Hom_{\cat{C}}(\blank,X)|_{\cat{T}}\) for any \(X\in\cat{C}\).
We say \(\cat{T}\) is \emph{contravariantly finite} in \(\cat{C}\) if \(\Hom_{\cat{C}}(\cat{T},X)\) is a finitely generated \(\cat{T}\)-module for all \(X\in\cat{T}\).
Dually, \(\cat{T}\) is \emph{covariantly finite} in \(\cat{C}\) if \(\Hom_{\cat{C}}(X,\cat{T})\) is a finitely generated \(\cat{T}^\op\)-module for all \(X\in\cat{C}\).
We say \(\cat{T}\) is \emph{functorially finite} if it is both contravariantly and covariantly finite.
\end{defn}

\begin{rem}
Note that \(\cat{T}\) is covariantly finite in \(\cat{C}\) if and only if \(\cat{T}^\op\) is contravariantly finite in \(\cat{C}^\op\).
These finiteness properties are often phrased in terms of the existence of approximations; by definition, there is an epimorphism \(\Hom_{\cat{C}}(\cat{T},T)\to\Hom_{\cat{C}}(\cat{T},X)\), for \(T\in\cat{T}\) and \(X\in\cat{C}\), if and only if there is a right \(\cat{T}\)-approximation \(T\to X\).
\end{rem}

\begin{defn}
For an extriangulated category \(\cat{C}\), we say \(X\in\cat{C}\) is \emph{rigid} if \(\EE_{\cat{C}}(X, X)=0\).
A full subcategory \(\cat{T}\subset\cat{C}\) is called
\begin{enumerate}
\item\emph{maximal rigid} if
\begin{align*}
\cat{T}&=\{X\in\cat{C}:\text{\(X\) is rigid and \(\EE_{\cat{C}}(X,T)=0=\EE_{\cat{C}}(T,X)\) for all \(T\in\cat{T}\)}\},
\end{align*}
\item \emph{weak cluster-tilting} if
\begin{align*}
\cat{T}&=\{X\in\cat{C}:\text{\(\EE_{\cat{C}}(X,T)=0\) for all \(T\in\cat{T}\)}\}\\
&=\{X\in\cat{C}:\text{\(\EE_{\cat{C}}(T,X)=0\) for all \(T\in\cat{T}\)}\},
\end{align*}
and
\item \emph{cluster-tilting} if it is weak cluster-tilting and functorially finite.
\end{enumerate}
We call \(\cat{C}\) \emph{weakly \(2\)-Calabi--Yau} if
\[\EE_{\cat{C}}(X,Y)=0\iff\EE_{\cat{C}}(Y,X)=0\]
for any \(X,Y\in\cat{C}\).
In particular, this holds if \(\cat{C}\) is a \(2\)-Calabi--Yau triangulated category.
If \(\cat{C}\) is weakly \(2\)-Calabi--Yau, the second equality in the definition of weak cluster-tilting becomes automatic.
\end{defn}

By definition, a cluster-tilting subcategory is weak cluster-tilting, and a weak cluster-tilting subcategory is maximal rigid.
The reverse implications do not hold in general, although Zhou and Zhu \cite[Thm.~2.6]{ZhoZhu} show that if \(\cat{C}\) is a \(2\)-Calabi--Yau triangulated category and has a cluster-tilting subcategory, then any functorially finite maximal rigid subcategory of \(\cat{C}\) is cluster-tilting.
This implies the same result for stably \(2\)-Calabi--Yau Frobenius extriangulated categories in general.
A very special situation in which maximal rigid subcategories are automatically weak cluster-tilting is the following.

\begin{prop}
\label{p:max-rigid-to-wct}
Let \(\cat{C}\) be a Krull--Schmidt and weakly \(2\)-Calabi--Yau extriangulated category.
If every indecomposable object of \(\cat{C}\) is rigid, then any maximal rigid subcategory of \(\cat{C}\) is weak cluster-tilting.
\end{prop}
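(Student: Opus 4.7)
The plan is to show directly from the definitions that any maximal rigid subcategory $\cat{T}\subset\cat{C}$ satisfies the two equalities defining weak cluster-tilting. Since $\cat{C}$ is weakly $2$-Calabi--Yau, the conditions $\EE_{\cat{C}}(X,T)=0$ for all $T\in\cat{T}$ and $\EE_{\cat{C}}(T,X)=0$ for all $T\in\cat{T}$ are equivalent for a given $X$, so the two subcategories appearing in the definition of weak cluster-tilting coincide. It therefore suffices to prove a single inclusion: if $\EE_{\cat{C}}(X,T)=0$ for every $T\in\cat{T}$, then $X\in\cat{T}$.

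To do this, I would fix such an object $X$ and use the Krull--Schmidt property to write $X=\bigoplus_i X_i$ with each $X_i$ indecomposable. The additivity of $\EE_{\cat{C}}$ in the first variable shows that $\EE_{\cat{C}}(X_i,T)=0$ for every $T\in\cat{T}$, and hence by the weakly $2$-Calabi--Yau assumption also $\EE_{\cat{C}}(T,X_i)=0$ for every $T\in\cat{T}$. Combined with the hypothesis that every indecomposable object of $\cat{C}$ is rigid (applied to $X_i$), this gives the three conditions needed to invoke the characterisation of membership in the maximal rigid subcategory $\cat{T}$. Hence $X_i\in\cat{T}$ for each $i$, and so $X\in\cat{T}$ as well.

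There isn't really a technical obstacle here: the proposition is essentially a direct consequence of unpacking the definitions once one realises that the two hypotheses (weakly $2$-Calabi--Yau and rigidity of all indecomposables) together upgrade a summand-by-summand comparison with $\cat{T}$ from a one-sided to a two-sided $\EE$-vanishing statement. The only subtlety worth flagging is to record carefully that the definition of maximal rigid used in this paper imposes not only that $\cat{T}$ is rigid and cannot be enlarged to a larger rigid subcategory, but that any $X$ satisfying $\EE_{\cat{C}}(X,T)=0=\EE_{\cat{C}}(T,X)$ for all $T\in\cat{T}$ together with $X$ itself being rigid actually already lies in $\cat{T}$; it is this stronger formulation which makes the argument go through with no further work.
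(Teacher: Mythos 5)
Your argument is correct and follows essentially the same route as the paper: decompose $X$ into indecomposables via Krull--Schmidt, use additivity of $\EE$ and the weak $2$-Calabi--Yau property to get two-sided vanishing of $\EE$ against $\cat{T}$ for each summand $X_i$, combine with rigidity of indecomposables to conclude $X_i\in\cat{T}$ from the defining characterisation of maximal rigid, and then deduce $X\in\cat{T}$. The only cosmetic difference is in the last step: the paper explicitly notes that $\EE_{\cat{C}}(X_i,X_j)=0$ for all $i,j$ (since all $X_i\in\cat{T}$), hence $X$ is itself rigid and so lies in $\cat{T}$ by the maximal rigid condition applied directly to $X$, whereas you jump from $X_i\in\cat{T}$ for all $i$ to $X\in\cat{T}$ via the (true, but worth a remark) observation that the maximal rigid characterisation forces $\cat{T}$ to be closed under finite direct sums.
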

\begin{proof}
Let \(\cat{T}\) be a maximal rigid subcategory of \(\cat{C}\), so there is an inclusion
\[
\cat{T}\subseteq\{X\in\cat{C}:\text{\(\EE_{\cat{C}}(T,X)=0\) for all \(T\in\cat{T}\)}\}=\{X\in\cat{C}:\text{\(\EE_{\cat{C}}(X,T)=0\) for all \(T\in\cat{T}\)}\},
\]
where for the equality we use that \(\cat{C}\) is weakly \(2\)-Calabi--Yau.

Choose \(X\in\cat{C}\) such that \(\EE_{\cat{C}}(T,X)=0\), and hence also \(\EE_{\cat{C}}(X,T)=0\), for all \(T\in\cat{T}\).
Since \(\cat{C}\) is Krull--Schmidt, we may choose a decomposition \(X=\bigoplus_{i=1}^nX_i\) in which each object \(X_i\) is indecomposable, hence rigid.
Since
\[0=\EE_{\cat{C}}\Bigl(T,\bigoplus_{i=1}^nX_i\Bigr)=\bigoplus_{i=1}^n\EE_{\cat{C}}(T,X_i),\]
we have for any \(1\leq i\leq n\) that \(\EE_{\cat{C}}(T,X_i)=0\).
Thus \(\EE_{\cat{C}}(X_i,T)=0\), and so \(X_i\in\cat{T}\) since \(\cat{T}\) is maximal rigid.
In particular, this means that \(\EE_{\cat{C}}(X_i,X_j)=0\) for all \(1\leq i, j\leq n\), so \(X\) is rigid and hence \(X\in\cat{T}\).
\end{proof}

In many examples, such as the original cluster categories of \cite{BMRRT}, all weak cluster-tilting subcategories are additively finite, meaning they are given by the additive closure \(\add{T}\) of some object \(T\in\cat{C}\).
Thus they are automatically cluster-tilting if \(\cat{C}\) is additionally Hom-finite.
In our context, we will be most interested in those examples in which the weak cluster-tilting subcategories do not have additive generators; sometimes such categories \(\cat{C}\), and their cluster-tilting subcategories \(\cat{T}\), are said to be of \emph{infinite rank}.
Interesting examples of such categories can be found in many other sources \cite{IguTod-CyclPoset, StoVRoo, HolJor-InftyGon}, and they are related to infinite rank cluster algebras \cite{Gratz-Colimits, GraGra-InfRank, GraGra-Colimits, CanFel}.

An important source of \(2\)-Calabi--Yau triangulated categories is the combinatorics of arcs in marked surfaces.
We focus on the case of discs, in which the assumptions of Proposition~\ref{p:max-rigid-to-wct} are satisfied.

\begin{defn}
\label{d:arc}
Given an oriented disc \(\disc\) with a set of marked points \(\pts\) in its boundary, an \emph{arc} of \((\disc,\pts)\) is a curve in \(\disc\) with endpoints in \(\pts\) such that the endpoints are distinct and not adjacent in \(\pts\) (i.e.\ any curve in \(\bdry\disc\) between these points must contain a third point of \(\pts\)).
Arcs are considered up to homotopy fixing \(\pts\). 
\end{defn}

Arcs in the sense of Definition~\ref{d:arc} are in natural bijection with unordered pairs of distinct and non-adjacent points of \(\pts\).
However, we wish to use concepts such as transverse intersection which have natural geometric definitions in terms of the representing curves.

Of particular interest for us are two families \(\ITcat_n\) and \(\PYcat_n\) of triangulated categories, which we will now describe, associated to discs with an infinite set \(\pts_n\) of marked points on their boundaries.
The set \(\pts_n\) has \(n\) accumulation points, all of which are two-sided (cf.~Figure~\ref{f:fountain}) and outside \(\pts_n\).
The category \(\ITcat_n\) will reflect the combinatorics of arcs between points of \(\pts_n\), whereas \(\PYcat_n\) will reflect that of arcs between points of the closure \(\close{\pts}_n\), i.e.\ the union of \(\pts_n\) with its accumulation points.

In describing these categories, we will repeatedly use the fact that \(\pts_n\) and \(\close{\pts}_n\) are cyclically ordered, in the following way.
Let \(\pts\) be any subset of \(\bdry\disc\), and consider the orientation-preserving universal cover \(\pi\colon\RR\to\bdry\disc\).
By convention, we take \(\RR\) to be oriented from negative to positive, and \(\bdry\disc\) to be oriented anticlockwise.
Then \(\univ{\pts}=\pi^{-1}(\pts)\) is totally ordered, via the usual linear ordering of \(\RR\), and thus \(\pts\) itself inherits a compatible cyclic ordering.

\begin{defn}
\label{d:cycl-ineq}
Given points \(x\) and \(y_i\) in \(\pts\), for \(i\in\NN\), we write
\[x\leq y_1\leq y_2\leq\dotsb\leq x^+\]
if there exist lifts \(\univ{x}\) and \(\univ{y}_i\) to \(\univ{\pts}\) such that
\[\univ{x}\leq\univ{y}_1\leq\univ{y}_2\leq\dotsb\leq\sigma\univ{x},\]
where \(\sigma\) generates the deck transformation group of \(\pi\colon\RR\to\bdry\disc\).
In other words, making a single anticlockwise circuit of \(\bdry\disc\) from \(x\) to \(x\), the points \(y_1,y_2,\dotsc\) appear in this order.
\end{defn}

With the notation from Definition~\ref{d:cycl-ineq}, the arc \(\gamma\) with endpoints \(x\) and \(y\) crosses the arc \(\gamma'\) with endpoints \(u\) and \(v\) if and only if
\[x<u<y<v<x^+,\]
as in Figure~\ref{f:Ptolemy}.
\begin{figure}
\begin{tikzpicture}[scale=2.4,every to/.style={hyperbolic disc}]
\draw (135:1) edge node[above, pos=0.4] {\(\gamma\)} (-45:1);
\draw (45:1) edge node[above, pos=0.4] {\(\gamma'\)} (-135:1);
\draw (135:1) to (45:1);
\draw (-45:1) to (45:1);
\draw (-45:1) to (-135:1);
\draw (-135:1) to (135:1);
\node at (90:0.55) {\(\alpha_1\)};
\node at (0:0.55) {\(\beta_2\)};
\node at (-90:0.55) {\(\alpha_2\)};
\node at (180:0.55) {\(\beta_1\)};
\draw[gray,very thick] (0,0) circle(1);
\draw (135:1.1) node {\(x\)};
\draw (45:1.1) node {\(v\)};
\draw (-45:1.1) node {\(y\)};
\draw (-135:1.1) node {\(u\)};
\end{tikzpicture}
\caption{A quadrilateral configuration, inducing two triangles of \(\ITcat_n\) as described in Theorem~\ref{t:ITcat}.
The curves \(\alpha_i\) and \(\beta_i\) are either arcs, or homotopic to an unmarked boundary segment.}
\label{f:Ptolemy}
\end{figure}

\begin{defn}
\label{d:succ}
A point \(x\in \pts\) has \emph{successor} \(z\ne x\) if whenever
\[x\leq y<z<x^+,\]
we have \(y=x\).
For each \(x\in \pts\) we write \(\susp x\) for the successor of \(x\) if it exists, and \(\susp x=x\) otherwise.
 If \(\gamma\) is an arc connecting \(x\) and \(y\) in \(\pts\), we define \(\susp\gamma\) to be the unique arc connecting \(\susp x\) and \(\susp y\).
\end{defn}

The map \(\susp\) from Definition~\ref{d:succ} is a permutation of \(\pts\).
 We observe that points \(x\ne y\) are adjacent, as in Definition~\ref{d:arc}, if and only if \(\susp x=y\) or \(\susp x=y\).
On \(\pts=\close{\pts}_n\), the fixed points of \(\susp\) are precisely the accumulation points \(\close{\pts}_n\setminus \pts_n\), and so \(\susp\) restricts to the analogously defined permutation of \(\pts_n\) (with no fixed points).

As a special case of their general construction of cluster categories for cyclic posets, Igusa and Todorov associate a triangulated category \(\ITcat_n\) to \((\disc,\pts_n)\) with the following properties.

\begin{thm}[{\cite[Thm.~2.5.3]{IguTod-CyclPoset}}]
\label{t:ITcat}
There is a \(\KK\)-linear Krull--Schmidt \(2\)-Calabi--Yau triangulated category \(\ITcat_n\) with indecomposable objects given by arcs in \((\disc,\pts_n)\), such that the suspension functor \(\Sigma\) acts on arcs by the permutation described above (also called \(\Sigma\)), and 
\[\Hom_{\ITcat_n}(\gamma,\susp\delta)=\begin{cases}\KK,&\text{\(\gamma\) and \(\delta\) cross transversely,}\\0,&\text{otherwise.}\end{cases}\]
Moreover, for each configuration as in Figure~\ref{f:Ptolemy}, there are distinguished triangles
\[\begin{tikzcd}
\gamma'\arrow{r}&\alpha_1\dsum\alpha_2\arrow{r}&\gamma\arrow{r}&\susp\gamma',
\end{tikzcd}
\quad
\begin{tikzcd}
\gamma\arrow{r}&\beta_1\dsum\beta_2\arrow{r}&\gamma'\arrow{r}&\susp\gamma.
\end{tikzcd}\]
Here we take \(\alpha_i\) or \(\beta_i\) to be zero if the corresponding curve is homotopic to an unmarked boundary segment.
\end{thm}

\begin{rem}
The complete set of distinguished triangles in \(\ITcat_{n}\) is generated by those described in Theorem~\ref{t:ITcat} (i.e.\ it is obtained by closing this set of triangles under isomorphism, rotation, pullback and pushout).
See \cite[Thm.~4.1]{CSP-TorPairs} for \(\ITcat_1\) (which is generated by any one of its arcs, each of which is a \(2\)-spherical object); the argument generalises to \(\ITcat_n\) for all \(n\) (cf.~\cite[\S6.1]{Franchini-TorPairs}, \cite[Lem.~4.16]{GraZvo}).

The composition map \(\circ\colon\Hom_{\ITcat_n}(\beta,\gamma)\times\Hom_{\ITcat_n}(\alpha,\beta)\to\Hom_{\ITcat_n}(\alpha,\gamma)\) is either zero or multiplication in \(\KK\), depending on the arcs \(\alpha\), \(\beta\) and \(\gamma\).
The situations in which the map is non-zero are described in \cite[Lem.~2.4.2]{IguTod-CyclPoset}, see also Lemma~\ref{l:hourglass} below.
\end{rem}

Each set \(\pts_n\) consists of \(n\) intervals, separated by the accumulation points (which lie outside \(\pts_n\)), and we choose an anticlockwise numbering of these intervals by the cyclically ordered set \(\ZZ_n=\{1,\dotsc,n\}\), as shown in Figure~\ref{f:M4} for \(n=4\).
In particular, this allows to discuss even and odd-numbered intervals.
\begin{figure}
\begin{tikzpicture}[scale=2.4,every to/.style={hyperbolic disc}]
\draw (125:1) to (200:1);
\draw (45:1) to (270:1);
\draw (-20:1) to (190:1);
\node at (155:0.65) {\(\delta\)};
\node at (55:0.6) {\(\gamma\)};
\node at (220:0.4) {\(\beta\)};
\draw[gray,very thick] (0,0) circle(1);
\foreach \n in {1,...,4}
{ \draw (115+90*\n:1) circle(0.0375) [fill=white];
  \draw (-20+90*\n:1.15) node {\n}; }
\end{tikzpicture}
\caption{A schematic of \((\disc,\pts_4)\).
Only the accumulation points are shown (as hollow circles, since they are not points in \(\pts_4\)), and the intervals between the accumulation points are each homeomorphic to \(\ZZ\subset\RR\).
The arc \(\delta\) is in \(\cat{D}\), whereas \(\gamma\) is in \(\rperp{\cat{D}}\).
However, \(\beta\) is not in \(\cat{D}\), since its endpoints are in different intervals, and it is not in \(\rperp{\cat{D}}\) either, since it crosses \(\delta\).}
\label{f:M4}
\end{figure}
The next definition is due to Paquette and Yıldırım \cite{PaqYil}.

\begin{defn}[{\cite[\S3]{PaqYil}}]
\label{d:PY-cat}
Let \(\cat{D}\subseteq\ITcat_{2n}\) be the subcategory additively generated by those arcs with both endpoints in the same even-numbered interval.
Since \(\susp\) restricts to a permutation of each interval, \(\cat{D}\) is a thick subcategory of \(\ITcat_{2n}\), and so we may define
\[\PYcat_n\defeq\ITcat_{2n}/\cat{D}\]
to be the Verdier quotient of \(\ITcat_{2n}\) by \(\cat{D}\).
\end{defn}

\begin{rem}
This follows related work by Fisher \cite{Fisher}, who gives a different description of \(\PYcat_1\) as the closure of \(\ITcat_1\) under homotopy colimits.
August--Cheung--Faber--Gratz--Schroll \cite{ACFGS1} describe a Frobenius exact model for \(\PYcat_1\) and relate it to the Grassmannian of \(2\)-dimensional quotients of a vector space of countably infinite dimension.
It follows from \cite[Prop.~5.12, Thm.~5.54]{Kalck-Thesis} that \(\PYcat_1\) is equivalent to the relative singularity category of an even-dimensional \(\type{A}_1\)-singularity.
Paquette--Yıldırım's categories, for all values of \(n\), have been further studied on a homological level by Franchini \cite{Franchini-TorPairs}, who classifies their torsion pairs and (co-)t-structures, and Murphy \cite{Murphy-GrothGrp,Murphy-OrlovSpec}, who calculates their Grothendieck groups and Orlov spectra.
\end{rem}

\begin{lem}
\label{l:dperp}
For \(\cat{D}\) as in Definition~\ref{d:PY-cat}, the indecomposable objects of \(\rperp{\cat{D}}\) are those with both endpoints in odd-numbered intervals of \(\pts_{2n}\).
\end{lem}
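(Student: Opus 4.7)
The plan is to reduce the lemma to a combinatorial statement about arcs and then verify it geometrically. Since $\ITcat_{2n}$ is $2$-Calabi--Yau (Theorem~\ref{t:ITcat}) and $\cat{D}$ is $\susp$-stable, we have $\rperp{\cat{D}}=\lperp{\cat{D}}$, so an indecomposable arc $\gamma$ lies in $\rperp{\cat{D}}$ if and only if $\Hom_{\ITcat_{2n}}(\gamma,\delta)=0$ for every arc $\delta$ with both endpoints in a common even-numbered interval. Writing $\delta=\susp(\susp^{-1}\delta)$ and applying the crossing criterion from Definition~\ref{d:IguTod-cat}, this says that $\gamma$ does not cross $\susp^{-1}\delta$ transversely. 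Because every accumulation point of $\close{\pts}_{2n}$ is two-sided and lies outside $\pts_{2n}$, each interval of $\pts_{2n}$ is order-isomorphic to $\ZZ$ and therefore $\susp^{\pm 1}$-stable; hence the class of arcs under consideration is itself $\susp$-stable, and the lemma reduces to the combinatorial claim that $\gamma$ avoids crossing every arc with both endpoints in a common even interval if and only if both endpoints of $\gamma$ lie in odd intervals.

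For the "if" direction I would fix such a $\gamma$, take any $\delta$ with endpoints $u,v$ in a common even interval $E$, and observe that the short boundary arc from $u$ to $v$ lies entirely in $E$ and so contains no point of any odd interval. Hence both endpoints of $\gamma$ sit on the long side of $\delta$, and $\gamma$ is homotopic to a curve avoiding $\delta$.

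For the "only if" direction I plan to argue by contrapositive. Suppose $\gamma$ has an endpoint $x$ in an even interval $E$ and let $y$ denote its other endpoint. Choose $\delta$ with endpoints $\susp^{-1}x$ and $\susp x$; both lie in $E$ by $\susp$-invariance, and they are non-adjacent (separated by $x$), so $\delta$ is an indecomposable of $\cat{D}$. The non-adjacency of $x$ and $y$ required of an arc forces $y\notin\{\susp^{-1}x,x,\susp x\}$, so $\delta$ and $\gamma$ share no endpoint, and the cyclic inequality $\susp^{-1}x<x<\susp x<y<(\susp^{-1}x)^+$ then confirms that they cross transversely. The main thing to watch out for is the bookkeeping around $\susp$ playing two roles simultaneously (triangulated suspension and successor permutation on $\pts_{2n}$), together with the distinction between $\pts_{2n}$ and its closure $\close{\pts}_{2n}$; both feed into the first paragraph.
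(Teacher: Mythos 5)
Your argument is correct and follows essentially the same route as the paper: both reduce the claim to the combinatorial fact that an arc avoids every arc lying inside a single even-numbered interval if and only if both of its endpoints lie in odd-numbered intervals. The paper's proof is terser, working directly with $\rperp{\cat{D}}=\rperp{(\susp^{-1}\cat{D})}$ rather than passing to $\lperp{\cat{D}}$ via the $2$-Calabi--Yau property, but this is an inessential variation and your careful verification of the underlying combinatorics matches what is implicitly asserted there.
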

\begin{proof}
Because \(\cat{D}\) is a thick subcategory, we have \(\cat{D}=\susp^{-1}\cat{D}\).
By construction, indecomposable objects of \(\rperp{\cat{D}}=\rperp{(\susp^{-1}\cat{D})}\) are those arcs having no crossings with any of the arcs in \(\cat{D}\), which are precisely those described in the statement.
\end{proof}

\begin{eg}
A schematic of the Auslander--Reiten quiver of \(\ITcat_4\), and its subcategories \(\cat{D}\) and \(\rperp{\cat{D}}\), is shown in Figure~\ref{f:ITcat4}.
In this figure, morphisms are generated by straight lines with slope \(\pm1\), from left to right, and any parallelogram with sides of these slopes corresponds to a mesh-like relation.
Some care must be taken with this figure, since it only captures `large-scale' information.
For example, a morphism given by a straight line from one edge of the strip to the other represents a morphism \(\gamma\to\Sigma^2\gamma\) for some arc \(\gamma\), despite the fact that the two ends of this arc appear to be identified when passing to the Möbius band.

\begin{figure}
\begin{tikzpicture}[scale=2]
\pgfmathsetmacro{\triht}{0.5*tan(45)}
\foreach \n/\f in {0/black!40,1/white,2/black!40}
{\draw[fill=\f,dashed] (0+\n*1.2,0) -- (1+\n*1.2,0) -- (0.5+\n*1.2,\triht) -- cycle;}
\foreach \n/\f in {0/black!40,1/white,2/black!40}
{\draw[fill=\f,dashed] (0+\n*1.2,0.4+4*\triht) -- (1+\n*1.2,0.4+4*\triht) -- (0.5+\n*1.2,0.4+3*\triht) -- cycle;}
\foreach \n/\f in {0/white,1/black!15,2/white}
{\draw[fill=\f,dashed] (0+\n*1.2,0.2+2*\triht) -- (0.5+\n*1.2,0.2+3*\triht) -- (1+\n*1.2,0.2+2*\triht) -- (0.5+\n*1.2,0.2+\triht) -- cycle;}
\foreach \n/\f in {0/black!15,1/black!15}
{\draw[fill=\f,dashed] (0.6+\n*1.2,0.1+\triht) -- (1.1+\n*1.2,0.1+2*\triht) -- (1.6+\n*1.2,0.1+\triht) -- (1.1+\n*1.2,0.1) -- cycle;}
\foreach \n/\f in {0/black!15,1/black!15}
{\draw[fill=\f,dashed] (0.6+\n*1.2,0.3+3*\triht) -- (1.1+\n*1.2,0.3+4*\triht) -- (1.6+\n*1.2,0.3+3*\triht) -- (1.1+\n*1.2,0.3+2*\triht) -- cycle;}
\foreach \x/\y/\l in {0/0/44, 1/0/33, 2/0/22, 0/1/22, 1/1/11, 2/1/44}
{\draw (0.5+1.2*\x,0.17+0.06*\y+4*\triht*\y) node {\(\l\)};}
\foreach \x/\y/\l in {0/0/34, 1/0/23, 0/1/12, 1/1/14}
{\draw (1.1+1.2*\x,0.1+\triht+0.2*\y+2*\triht*\y) node {\(\l\)};}
\foreach \x/\l in {0/13, 1/24, 2/13}
{\draw (0.5+1.2*\x,0.2+2*\triht) node {\(\l\)};}
\draw (0.5,-0.1) edge [dotted,black!60] (0.5,0.5+4*\triht);
\draw (2.9,-0.1) edge [dotted,black!60] (2.9,0.5+4*\triht);
\end{tikzpicture}
\caption{A schematic of the Auslander--Reiten quiver of the category \(\ITcat_4\), drawn on a Möbius band.
Each triangle represents a \(\ZZ\type{A}_\infty\) component, and each diamond a \(\ZZ\type{A}_\infty^\infty\) component.
The components are characterised by the pair of intervals (as in Figure~\ref{f:M4}) in which their arcs have endpoints, and we label each component by this pair.
The darkly shaded components are those of \(\cat{D}\), whereas the unshaded components are those of \(\rperp{\cat{D}}\).}
\label{f:ITcat4}
\end{figure}

The corresponding figure for the Verdier quotient \(\PYcat_2\) is shown in Figure~\ref{f:PYcat2}.
Once again morphisms are generated by lines of slope \(\pm1\), satisfying mesh relations given by parallelograms.
The suspension acts on \(\type{A}_\infty^\infty\) components by translation against the direction of the arrows, and fixes the single-object component.
This means in particular that this indecomposable object is not rigid in the triangulated category \(\PYcat_n\) (cf.~Proposition~\ref{p:PY-exts} below).

\begin{figure}
\begin{tikzpicture}[scale=2]
\pgfmathsetmacro{\triht}{0.5*tan(45)}
\foreach \n in {0,1}
{\draw[dashed] (\n*1.2,0.1+\triht) -- (0.5+\n*1.2,0.1+2*\triht) -- (1+\n*1.2,0.1+\triht) -- (0.5+\n*1.2,0.1) -- cycle;}
\draw[dashed] (0.6,0) -- (1.6,0) -- (1.1,\triht) -- cycle;
\draw[dashed] (0.6,0.2+2*\triht) -- (1.6,0.2+2*\triht) -- (1.1,0.2+\triht) -- cycle;
\foreach \x/\y/\l in {0/0/13, 1/0/13}
{\draw (0.5+1.2*\x,0.1+\triht+0.2*\y+2*\triht*\y) node {\(\l\)};}
\draw (0.55,0.05) edge (1.05,0.05+\triht);
\draw (1.65,0.05) edge (1.15,0.05+\triht);
\draw (0.55,0.15+2*\triht) edge (1.05,0.15+\triht);
\draw (1.65,0.15+2*\triht) edge (1.15,0.15+\triht);
\draw (0.45,0.05) edge (-0.05,0.05+\triht);
\draw (-0.05,0.15+\triht) edge (0.45,0.15+2*\triht);
\draw (1.75,0.15+2*\triht) edge (2.25,0.15+\triht);
\draw (1.75,0.05) edge (2.25,0.05+\triht);
\foreach \x in {-0.1,1.1,2.3}
{\draw[fill=black] (\x,0.1+\triht) circle(0.015);}
\draw (1.1,0.17) node {\(33\)};
\draw (1.1,2*\triht+0.2-0.17) node {\(11\)};
\draw (0.5,-0.1) edge [dotted,black!60] (0.5,0.3+2*\triht);
\draw (1.7,-0.1) edge [dotted,black!60] (1.7,0.3+2*\triht);
\end{tikzpicture}
\caption[]{A schematic of the category \(\PYcat_2=\ITcat_4/\cat{D}\), drawn on a Möbius band.
Triangles and diamonds have the same meaning as in Figure~\ref{f:ITcat4}, while a line represents an \(\type{A}_\infty^\infty\) component, consisting of arcs with exactly one endpoint at an accumulation point of \(\close{\pts}_2\).
A dot indicates a component with a single indecomposable object.
Unlike \(\ITcat_n\), the category \(\PYcat_n\) does not have Auslander--Reiten triangles (see Remark~\ref{r:2CY}), but our schematic still represents a quiver with a vertex for each indecomposable object and arrows representing irreducible morphisms (but with no translation structure).}
\label{f:PYcat2}
\end{figure}
\end{eg}

Indecomposable objects of \(\PYcat_n\) are related to arcs between points of \(\close{\pts}_n\), as we now explain.
First, define a map \(\pts_{2n}\to\close{\pts}_n\), denoted by \(p\mapsto\squeeze{p}\), such that for each \(k\in\ZZ_n\)
\begin{enumerate}
\item interval \(2k-1\) in \(\pts_{2n}\) is mapped to interval \(k\) in \(\pts_n\subset\close{\pts}_n\) via an order preserving bijection, and
\item every point in interval \(2k\) is mapped to the accumulation point in \(\close{\pts}_n\) between intervals \(k\) and \(k+1\).
\end{enumerate}
We observe that \(\squeeze{\susp p}=\susp\squeeze{p}\) for all \(p\in \pts_{2n}\).
Thus, if \(p\) and \(q\) are non-adjacent in \(\pts_{2n}\), then \(\squeeze{p}\) and \(\squeeze{q}\) are either equal or non-adjacent in \(\close{\pts}_n\).
This map on marked points can be extended to a continuous map \(\disc\to\disc\), collapsing \(n\) cones (meeting the boundary in the odd-numbered intervals) to lines (meeting the boundary at an accumulation point of \(\close{\pts}_n\)).

If \(\gamma\) is the arc with endpoints \(u\) and \(v\) in \(\pts_{2n}\), and \(u\) and \(v\) do not lie in the same even-numbered interval, we write \(\squeeze{\gamma}\) for the arc with endpoints \(\squeeze{u}\) and \(\squeeze{v}\) in \(\close{\pts}_{n}\), our assumption on \(u\) and \(v\) ensuring that \(\squeeze{u}\) and \(\squeeze{v}\) are distinct, and the observation above ensuring that they are non-adjacent.
This observation also implies that \(\squeeze{\susp\gamma}=\susp\squeeze{\gamma}\).

\begin{prop}[{\cite[Cor.~3.11]{PaqYil}}]
\label{p:PY-bijection}
The map \(\gamma\mapsto\squeeze{\gamma}\), defined on arcs \(\gamma\) with endpoints in \(\pts_{2n}\) not lying in the same even-numbered interval, is a bijection between isomorphism classes of indecomposable objects in \(\PYcat_n\) and arcs in \((\disc,\close{\pts}_n)\).
\end{prop}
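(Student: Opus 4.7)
My plan is to verify that the map $\gamma\mapsto\squeeze{\gamma}$ is well-defined, surjective, and injective on isomorphism classes of indecomposables of $\PYcat_n$.

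For well-definedness and surjectivity, an indecomposable arc $\gamma$ of $\ITcat_{2n}$ vanishes in $\PYcat_n$ precisely when $\gamma\in\cat{D}$, which happens exactly when both endpoints of $\gamma$ lie in a single even-numbered interval of $\pts_{2n}$---this is the degenerate configuration excluded from the domain of the squeeze map. Any arc in $(\disc,\close{\pts}_n)$ arises as $\squeeze{\gamma}$ for some $\gamma\notin\cat{D}$: lift each non-accumulation endpoint uniquely to the corresponding odd interval of $\pts_{2n}$, and each accumulation endpoint arbitrarily to a point in the adjacent even interval. Because the endomorphism ring of any indecomposable $\gamma\in\ITcat_{2n}$ is local, the quotient $\End_{\PYcat_n}(\loc(\gamma))$ is either local or zero, so $\loc(\gamma)$ is indecomposable or zero; combined with Krull--Schmidt in $\ITcat_{2n}$, this shows that every indecomposable of $\PYcat_n$ is isomorphic to $\loc(\gamma)$ for some $\gamma\notin\cat{D}$, and the induced map on iso classes is surjective.

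For the ``only if'' direction of injectivity, the easy sub-case is when both endpoints of $\gamma$ and $\gamma'$ lie in odd intervals: then $\gamma,\gamma'\in\rperp{\cat{D}}$ by Lemma~\ref{l:dperp}, and Corollary~\ref{c:perp-equiv} forces $\gamma\cong\gamma'$ in $\ITcat_{2n}$, whence $\squeeze{\gamma}=\squeeze{\gamma'}$. In general, whenever $\squeeze{\gamma}\neq\squeeze{\gamma'}$, I would pick an auxiliary arc $\alpha\in\rperp{\cat{D}}$ crossing exactly one of $\gamma,\gamma'$ transversely, and use Proposition~\ref{p:Dperp} to identify $\Hom_{\PYcat_n}(\alpha,\susp\blank)$ with $\Hom_{\ITcat_{2n}}(\alpha,\susp\blank)$; non-vanishing of this space then distinguishes $\loc(\gamma)$ from $\loc(\gamma')$.

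The ``if'' direction reduces, by transitivity, to the single case $\gamma=uv$ and $\gamma'=(\susp u)v$ with $u,\susp u$ both in a common even interval $2k$ and $v\notin 2k$. The key step is to apply Theorem~\ref{t:ITcat} to the quadrilateral with vertices in cyclic order $\susp^{-1}u,\susp u,\susp^{-1}v,v$: one side $(\susp^{-1}u)(\susp u)$ has both endpoints in interval $2k$ and so lies in $\cat{D}$, while the opposite side $(\susp^{-1}v)v$ is degenerate since its endpoints are adjacent. The resulting distinguished triangle in $\ITcat_{2n}$,
\[
(\susp^{-1}u)(\susp^{-1}v)\to(\susp^{-1}u)(\susp u)\to(\susp u)v\to\susp((\susp^{-1}u)(\susp^{-1}v))=uv,
\]
collapses under $\loc$ to a triangle of shape $A\to 0\to B\to\susp A$ in $\PYcat_n$, forcing $B\cong\susp A$, that is, $\loc((\susp u)v)\cong\loc(uv)$, as required. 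The principal obstacle will be the ``only if'' direction when at least one endpoint of $\gamma$ lies in an even interval: here I cannot simply invoke fully-faithfulness of $\loc|_{\rperp{\cat{D}}}$, and must carefully guarantee the existence of a suitable test arc $\alpha\in\rperp{\cat{D}}$ that separates the two given arcs, drawing on the density of odd-interval marked points between the accumulation points of $\close{\pts}_n$.
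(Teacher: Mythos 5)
The paper does not actually prove this statement; it imports it wholesale from \cite[Cor.~3.11]{PaqYil}, so there is no internal argument to compare yours against. Taken on its own terms, much of your reconstruction is sound: the ``if'' direction is correct and elegant (the quadrilateral $\susp^{-1}u,\susp u,\susp^{-1}v,v$ does yield, via Theorem~\ref{t:ITcat}, the triangle you write, with $(\susp^{-1}v)v$ degenerate and $(\susp^{-1}u)(\susp u)\in\cat{D}$, and the collapsed triangle $A\to 0\to B\to\susp A$ forces $\loc((\susp u)v)\cong\loc(uv)$), and the test-arc strategy for injectivity does go through: if $p$ is an endpoint of $\squeeze{\gamma}$ not shared by $\squeeze{\gamma'}$, take the arc with endpoints $\susp^{-1}p$ and $\susp p$ when $p\in\pts_n$, and with endpoints in $\pts_n$ on either side of $p$, closer to $p$ than any other endpoint of the two arcs, when $p$ is an accumulation point; lifted to odd intervals this arc lies in $\lperp{\cat{D}}$ and crosses $\gamma$ but not $\gamma'$ transversely, so Proposition~\ref{p:Dperp} separates them.

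The genuine gap is your claim that $\End_{\PYcat_n}(\loc(\gamma))$ is ``the quotient'' of the local ring $\End_{\ITcat_{2n}}(\gamma)$ and hence local or zero. A Verdier localisation functor is not full: morphisms in $\PYcat_n$ are roofs, and Hom-spaces can strictly grow. This happens in the present example — for the arc $\gamma$ between the two accumulation points of $\close{\pts}_2$ one has $\Hom_{\ITcat_4}(\gamma,\susp\gamma)=0$ but $\Hom_{\PYcat_2}(\gamma,\susp\gamma)=\KK$ by case \ref{2-limit-end} of Proposition~\ref{p:PY-exts} — so locality upstairs gives no control over $\End_{\PYcat_n}(\gamma)$, and without that you cannot conclude that a nonzero $\loc(\gamma)$ is indecomposable, which is needed even for your map to be defined on indecomposables. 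This is precisely the point made in Remark~\ref{r:KS}: the Krull--Schmidt property of $\PYcat_n$ is \emph{deduced} from Paquette--Yıldırım's Hom-space computations, not from formal properties of localisation. To close the gap you must compute $\End_{\PYcat_n}(\gamma)$ directly by the calculus of fractions (the real content of \cite[\S3]{PaqYil}), or at least invoke Proposition~\ref{p:PY-exts} with $\delta=\susp^{-1}\gamma$ to get $\dim_{\KK}\End_{\PYcat_n}(\gamma)\leq 1$.
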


\begin{rem}
\label{r:dperp}
While \(\ITcat_{2n}\) is a skeletal category, meaning that two objects are isomorphic if and only if they are equal, this is not the case for \(\PYcat_{n}\).
Part of the content of Proposition~\ref{p:PY-bijection} is that arcs \(\gamma\) and \(\delta\) in \(\ITcat_{2n}\) are isomorphic in \(\PYcat_n\) if and only if \(\squeeze{\gamma}=\squeeze{\delta}\).
This equivalence relation on arcs between points of \(\pts_{2n}\) is called \emph{similarity} in \cite{PaqYil}, and there are bijections between similarity classes of arcs in \((\disc,\pts_{2n})\), arcs in \((\disc,\close{\pts}_n)\), and isoclasses of indecomposable objects in \(\PYcat_n\).

By Lemma~\ref{l:dperp}, if \(\gamma\) is an indecomposable object of \(\rperp{\cat{D}}\subseteq\ITcat_{2n}\) then \(\squeeze{\gamma}\) is an arc between points of \(\pts_n\), i.e.\ it avoids the accumulation points.
In this case, the isomorphism class of \(\gamma\) in \(\PYcat_n\) consists of a single object.
The arcs \(\squeeze{\gamma}\) obtained in this way are precisely the objects of \(\ITcat_n\), which thus include into \(\PYcat_n\) as the objects of the orthogonal category \(\rperp{\cat{D}}\).
This can be seen in Figure~\ref{f:PYcat2}, in which the four labelled components (of types \(\ZZ\type{A}_\infty\) and \(\ZZ\type{A}_\infty^\infty\)) comprise \(\rperp{\cat{D}}\).
\end{rem}

In contrast to \(\ITcat_{2n}\), the quotient category \(\PYcat_n\) is not even weakly \(2\)-Calabi--Yau.
Indeed, Paquette and Yıldırım compute the extension space between any pair of indecomposable objects as follows.

\begin{prop}[{\cite[Prop.~3.14]{PaqYil}}]
\label{p:PY-exts}
Let \(\gamma\) and \(\delta\) be arcs in \((\disc,\pts_{2n})\).
Then we have \(\Hom_{\PYcat_n}(\gamma,\susp \delta)=\KK\) if and only if either
\begin{enumerate}
\item\label{0-limit-end} \(\squeeze{\gamma}\) and \(\squeeze{\delta}\) cross transversely,
\item\label{1-limit-end} \(\squeeze{\gamma}\) and \(\squeeze{\delta}\) are distinct, meet at an accumulation point \(p\) of \(\close{\pts}_n\), and the angle from \(\squeeze{\gamma}\) to \(\squeeze{\delta}\) at \(p\) on the inside of the disc is clockwise (see Figure~\ref{f:1-limit-end}), or
\item\label{2-limit-end} \(\squeeze{\gamma}=\squeeze{\delta}\) has both endpoints at accumulation points of \(\close{\pts}_n\).
\end{enumerate}
In all other cases, \(\Hom_{\PYcat_n}(\gamma,\susp\delta)=0\).
\end{prop}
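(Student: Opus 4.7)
The plan is to reduce each Hom-computation to one in $\ITcat_{2n}$ whenever possible via Proposition~\ref{p:Dperp}, and to handle the residual cases by a direct analysis of the calculus of fractions defining $\PYcat_n$.
Since $\ITcat_{2n}$ is $2$-Calabi--Yau and $\cat{D}$ is closed under $\susp$, we have $\lperp{\cat{D}}=\rperp{\cat{D}}$, which by Lemma~\ref{l:dperp} consists of the arcs with both endpoints in odd-numbered intervals of $\pts_{2n}$.
Hence whenever we can find representatives $\gamma'\cong\gamma$ or $\delta'\cong\delta$ (as objects of $\PYcat_n$) lying in $\rperp{\cat{D}}$, Proposition~\ref{p:Dperp} identifies $\Hom_{\PYcat_n}(\gamma,\susp\delta)$ with $\Hom_{\ITcat_{2n}}(\gamma',\susp\delta')$, which by Definition~\ref{d:IguTod-cat} is $\KK$ if and only if $\gamma'$ and $\delta'$ cross transversely in $(\disc,\pts_{2n})$.

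First I would handle case~\ref{0-limit-end} in the generic situation where at least one of $\squeeze{\gamma}$, $\squeeze{\delta}$ has both endpoints in $\pts_n$: such an arc is already its own preferred representative in $\rperp{\cat{D}}$, and transverse crossing in the squeezed disc is equivalent to transverse crossing of the chosen lifts in $(\disc,\pts_{2n})$.
For case~\ref{1-limit-end}, where $\squeeze{\gamma}$ and $\squeeze{\delta}$ meet at an accumulation point $p$, I would choose lifts $\gamma,\delta$ having endpoints $u,v$ in the even-numbered interval collapsed to $p$; the clockwise-inside-angle condition translates precisely to the cyclic-order condition on $u,v$ that makes $\gamma$ and $\delta$ cross transversely in $(\disc,\pts_{2n})$.
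Case~\ref{2-limit-end} is treated similarly: two distinct lifts of the same $\squeeze{\gamma}=\squeeze{\delta}$ (whose endpoints lie in two even intervals) can be made to cross transversely in $\pts_{2n}$ by choosing their endpoints in opposite positions within each even interval.

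The main obstacle is establishing the two-sided statement: both that the candidate generator constructed above is still nonzero in $\PYcat_n$, and that \emph{all} other configurations yield $0$.
Non-vanishing in cases~\ref{1-limit-end} and~\ref{2-limit-end} does not follow from Proposition~\ref{p:Dperp}, since neither $\gamma$ nor $\delta$ need lie in $\rperp{\cat{D}}$; instead I would argue that the generator does not factor through any object of $\cat{D}$ by analysing the Ptolemy-style triangles of Theorem~\ref{t:ITcat} at the accumulation point, showing that every arc of $\cat{D}$ is separated from our generator by one of these triangles.
For the vanishing direction in configurations not covered by (a)--(c), a morphism in $\PYcat_n$ is a roof $\gamma\xleftarrow{s}\gamma''\xrightarrow{f}\susp\delta$ with $\cone(s)\in\cat{D}$; I would use the mapping-cone triangle of $s$ to show that $\gamma''$ may be replaced by a lift of $\squeeze{\gamma}$ and that $f$ lies in the $1$-dimensional space $\Hom_{\ITcat_{2n}}(\gamma'',\susp\delta)$, with the generator (if present) factoring through $\cat{D}$ outside the three listed configurations.
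Carrying out the combinatorial bookkeeping for this factorisation is the step requiring the most care.
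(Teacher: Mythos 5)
This proposition is not proved in the paper at all: it is imported verbatim from Paquette--Yıldırım \cite[Prop.~3.14]{PaqYil}, so there is no in-paper argument to compare yours against. Judged on its own terms, your sketch has a sensible skeleton --- reduce to $\ITcat_{2n}$ via Proposition~\ref{p:Dperp} whenever one arc lies in $\rperp{\cat{D}}=\lperp{\cat{D}}$, and analyse roofs otherwise --- and this is consistent with how the present paper uses these facts elsewhere (e.g.\ in the proof of Theorem~\ref{t:Ext-cross}). But the sketch defers precisely the parts that constitute the content of the proposition, and one of the explicit steps is wrong as stated.

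Concretely: in case~\ref{1-limit-end} you claim the clockwise-angle condition ``translates precisely'' into the lifts crossing transversely in $(\disc,\pts_{2n})$. It does not. The endpoints of the lifts may be placed anywhere in the (infinite) even interval collapsing to $p$, so transversely crossing lifts exist for \emph{both} the clockwise and the anticlockwise configuration; since $\ITcat_{2n}$ is $2$-Calabi--Yau, $\Hom_{\ITcat_{2n}}(\gamma,\susp\delta)$ and $\Hom_{\ITcat_{2n}}(\delta,\susp\gamma)$ are then both $\KK$. The asymmetry between Figure~\ref{f:1-limit-end} and its mirror image (where the Hom space vanishes) therefore lives entirely in the passage to the quotient --- whether the generator factors over $\cat{D}$ (Lemma~\ref{l:hourglass} applied to $[\gamma,\susp\delta]\cap\cat{D}$) and whether roofs contribute new morphisms --- which is exactly what you postpone. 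Second, you never establish the upper bound $\dim_\KK\Hom_{\PYcat_n}(\gamma,\susp\delta)\leq 1$ in cases \ref{0-limit-end}--\ref{2-limit-end}: showing a lifted generator survives only gives non-vanishing, and your roof analysis is confined to ``configurations not covered by (a)--(c)''. Third, in that roof analysis the middle object $\gamma''$ of a roof $\gamma\leftarrow\gamma''\to\susp\delta$ is an arbitrary object with cone of $s$ in $\cat{D}$, typically decomposable, so ``$f$ lies in the $1$-dimensional space $\Hom_{\ITcat_{2n}}(\gamma'',\susp\delta)$'' is unjustified; reducing to indecomposable $\gamma''$, and then to a lift of $\squeeze{\gamma}$, is itself a nontrivial step. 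The ``combinatorial bookkeeping'' you set aside is the proof.
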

\begin{figure}
\begin{tikzpicture}[scale=10,every to/.style={hyperbolic disc}]
\draw [gray,very thick,domain=60:120] plot ({cos(\x)}, {sin(\x)});
\node[label={\(p\)}] (p) at (0,1) {};
\draw [black] (p) circle(0.01) [fill=black];
\draw (0.2,0.7) to (0,1);
\draw (0,1) to (-0.2,0.7);
\draw [-angle 90, dashed, domain=-75:-105] plot ({0.2*cos(\x)},{1+0.2*sin(\x)});
\node at (0.1,0.82) {\(\squeeze{\gamma}\)};
\node at (-0.1,0.82) {\(\squeeze{\delta}\)};
\end{tikzpicture}
\caption{Arcs in \((\disc,\close{\pts}_n)\) meeting at an accumulation point \(p\) in the configuration described in Proposition~\ref{p:PY-exts}\ref{1-limit-end}.
In this case, \(\Hom_{\PYcat_n}(\gamma,\susp\delta)=\KK\) but \(\Hom_{\PYcat_n}(\delta,\susp\gamma)=0\).}
\label{f:1-limit-end}
\end{figure}

Note that condition \ref{1-limit-end} of Proposition~\ref{p:PY-exts} is not symmetric in \(\gamma\) and \(\delta\), and indeed can hold for at most one of the two orderings.
This witnesses the failure of \(\PYcat_n\) to be weakly \(2\)-Calabi--Yau.

\begin{rem}
Combining Remark~\ref{r:dperp} and Proposition~\ref{p:PY-exts}, we see that the subcategory \(\rperp{\cat{D}}\subseteq\ITcat_{2n}\) and its image in \(\PYcat_n\) are equivalent to the category \(\ITcat_n\).
\end{rem}

\begin{rem}
\label{r:KS}
The previous two results of Paquette--Yıldırım together imply that \(\PYcat_n\) is Krull--Schmidt.
Indeed, they imply that \(\dim\Hom_{\PYcat_n}(\gamma,\gamma)\leq 1\), and hence this algebra is local, whenever \(\gamma\in\PYcat_n\) is indecomposable.
Since objects with local endomorphism algebra are always indecomposable, and \(\PYcat_n\) is Hom-finite, this is equivalent to the Krull--Schmidt property (see, e.g., \cite[Thm.~6.1]{Shah-KRS}).
In particular, \(\PYcat_n\) is idempotent complete, a property not enjoyed by Verdier quotients of idempotent complete categories in general.
\end{rem}

Paquette and Yıldırım use Proposition~\ref{p:PY-exts} to classify weak cluster-tilting subcategories in the triangulated category \(\PYcat_n\) in terms of triangulations of $(\disc,
\close{\pts}_n)$.
The conditions in Proposition~\ref{p:PY-exts} translate to strong restrictions on such a triangulation, since they imply that a non-crossing collection of arcs can give rise (via direct sum) to a non-rigid object in \(\PYcat_n\), in contrast to the case of \(\ITcat_n\).
Indeed, if \(\squeeze{\gamma}\) is an arc between accumulation points, then the indecomposable object \(\gamma\in\PYcat_n\) already fails to be rigid.

In the remainder of the section, we will demonstrate that the dictionary between rigidity of objects and crossings of arcs may be restored by instead considering \(\PYcat_n\) as an extriangulated category with the extriangulated substructure \(\EE_{\rperp{\cat{D}}}\) from Section~\ref{s:substruct}, where \(\cat{D}\subseteq\ITcat_{2n}\) is the subcategory used to define \(\PYcat_n=\ITcat_{2n}/\cat{D}\).

\begin{eg}
\label{eg:non-conf}
Since \(\rperp{\cat{D}}\) is a proper subcategory of \(\PYcat_n\)---it does not contain any arc incident with an accumulation point---the extriangulated substructure \(\EE_{\rperp{\cat{D}}}\) has strictly fewer conflations than the original triangulated structure on \(\PYcat_n\).
Indeed, whenever \(\gamma\notin\rperp{\cat{D}}\), the triangle
\[\begin{tikzcd}\Sigma^{-1}\gamma\arrow{r}&0\arrow{r}&\gamma\arrow{r}{1}&\gamma\end{tikzcd}\]
is not a conflation for \(\EE_{\rperp{\cat{D}}}\), since the identity map on \(\gamma\) does not factor over \(\rperp{\cat{D}}\).
A less trivial example of a triangle from \(\PYcat_n\) which is not a conflation for \(\EE_{\rperp{\cat{D}}}\) is shown in Figure~\ref{f:non-conf}.
\begin{figure}
\begin{tikzpicture}[scale=2,every to/.style={hyperbolic disc}]
\draw (56:1) to (204:1);
\draw (204:1) to (270:1);
\draw (270:1) to (56:1);
\draw (-22:0.26) node {\(\alpha\)};
\draw (-125:0.69) node {\(\gamma\)};
\draw (135:0.24) node {\(\beta\)};
\draw (-30:0.11) edge[-angle 90] (145:0.1);
\draw (185:0.19) edge[-angle 90] (-130:0.52);
\draw[gray,very thick] (0,0) circle(1);
\foreach \n in {1,2}
{ \draw (90-180*\n:1) circle(0.025) [fill=black];}
\end{tikzpicture}
\quad\quad
\begin{tikzpicture}[scale=2.8]
\pgfmathsetmacro{\triht}{0.5*tan(45)}
\foreach \n in {0,1}
{\draw[dashed,black!50] (\n*1.2,0.1+\triht) -- (0.5+\n*1.2,0.1+2*\triht) -- (1+\n*1.2,0.1+\triht) -- (0.5+\n*1.2,0.1) -- cycle;}
\draw[dashed,black!50] (0.6,0) -- (1.6,0) -- (1.1,\triht) -- cycle;
\draw[dashed,black!50] (0.6,0.2+2*\triht) -- (1.6,0.2+2*\triht) -- (1.1,0.2+\triht) -- cycle;
\draw (0.55,0.05) edge[black!50] (1.05,0.05+\triht);
\draw (1.65,0.05) edge[black!50] (1.15,0.05+\triht);
\draw (0.55,0.15+2*\triht) edge[black!50] (1.05,0.15+\triht);
\draw (1.65,0.15+2*\triht) edge[black!50] (1.15,0.15+\triht);
\draw (0.45,0.05) edge[black!50] (-0.05,0.05+\triht);
\draw (-0.05,0.15+\triht) edge[black!50] (0.45,0.15+2*\triht);
\draw (1.75,0.15+2*\triht) edge[black!50] (2.25,0.15+\triht);
\draw (1.75,0.05) edge[black!50] (2.25,0.05+\triht);
\foreach \x in {-0.1,1.1,2.3}
{\draw[black!50,fill=black!50] (\x,0.1+\triht) circle(0.015);}
\draw (0.5,-0.1) edge [dotted,black!60] (0.5,0.3+2*\triht);
\draw (1.7,-0.1) edge [dotted,black!60] (1.7,0.3+2*\triht);
\node at (0.6,0.2+\triht) (beta) {\(\beta\)};
\node at (0.9,0.4) (gamma) {\(\gamma\)};
\node at (0.9+\triht,0.4+\triht) (Sigma-alpha) {\(\susp\alpha\)};
\node at (0.2,-0.2+\triht) (alpha) {\(\alpha\)};
\draw (alpha) edge[-angle 90,thick] (beta);
\draw (beta) edge[-angle 90, thick] (gamma);
\draw (gamma) edge[-angle 90, thick] (Sigma-alpha);
\end{tikzpicture}
\caption{The triangle \(\alpha\to\beta\to\gamma\to\susp\alpha\) in \(\PYcat_2\) is not a conflation for \(\EE_{\rperp{\cat{D}}}\), since its connecting morphism does not factor over \(\rperp{\cat{D}}\).
The rotations \(\susp^{-1}\gamma\to\alpha\to\beta\to\gamma\) and \(\susp^{-1}\beta\to\susp^{-1}\gamma\to\alpha\to\beta\) of this triangle are conflations for \(\EE_{\rperp{\cat{D}}}\), however, since their connecting morphisms factor (trivially) over \(\beta\in\rperp{\cat{D}}\).}
\label{f:non-conf}
\end{figure}
\end{eg}

To understand the conflations for \(\EE_{\rperp{\cat{D}}}\), we need to determine the morphisms in \(\PYcat_n\) factoring over the subcategory \(\rperp{\cat{D}}\), for which the next few results will be useful.

\begin{lem}
\label{l:1dim}
Let \(\cat{C}\) be a \(\KK\)-linear Krull--Schmidt category such that \(\dim\Hom_{\cat{C}}(X,Y)\leq 1\) for any indecomposable objects \(X,Y\in\cat{C}\), and let \(S\) be a set of indecomposable objects in \(\cat{C}\).
Then for \(X,Y\in\cat{C}\) indecomposable, a morphism \(f\colon X\to Y\) factors over \(\add{S}\) if and only if it factors over a single object from \(S\).
\end{lem}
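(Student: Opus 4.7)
The backward implication is immediate, so the work is in the forward direction. The plan is to use the Krull--Schmidt property to reduce to a finite direct sum, then exploit the one-dimensionality hypothesis on $\Hom$-spaces to isolate one summand that already witnesses the factorisation.

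First I would fix a factorisation $f = h\circ g$ with $g\colon X\to Z$, $h\colon Z\to Y$, and $Z\in\add{S}$. Since $\cat{C}$ is Krull--Schmidt and $\add{S}$ is closed under direct summands, $Z$ decomposes as a finite direct sum $\bigdsum_{i=1}^n Z_i$ in which each $Z_i$ is indecomposable and isomorphic to some object in $S$. Replacing $Z_i$ by the isomorphic object from $S$ if necessary (and absorbing the isomorphisms into $g$ and $h$), I may assume $Z_i\in S$ for every $i$. Writing $g$ and $h$ in components gives morphisms $g_i\colon X\to Z_i$ and $h_i\colon Z_i\to Y$ with
\[f = \sum_{i=1}^n h_i\circ g_i.\]

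Next I would use the hypothesis that $\dim_{\KK}\Hom_{\cat{C}}(X,Y)\leq 1$. If $\Hom_{\cat{C}}(X,Y)=0$, then $f=0$ and it factors trivially through any single $Z_i$ (with $S$ non-empty, as is the case of interest; otherwise $\add{S}=0$ and the statement is vacuous beyond the zero morphism). Otherwise pick a basis element $f_0$ of $\Hom_{\cat{C}}(X,Y)$ and write $h_i\circ g_i = \LL_i f_0$ for scalars $\LL_i\in\KK$, so that $f = \bigl(\sum_i \LL_i\bigr)f_0$. If $f=0$ there is again nothing to prove; if $f\neq 0$ then $\sum_i\LL_i\neq 0$, so at least one $\LL_j$ is nonzero, and then
\[f = \Bigl(\sum_i \LL_i\Bigr)f_0 = \frac{\sum_i \LL_i}{\LL_j}\,(h_j\circ g_j) = \Bigl(\tfrac{\sum_i \LL_i}{\LL_j}\,h_j\Bigr)\circ g_j,\]
exhibiting a factorisation of $f$ through the single object $Z_j\in S$.

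There is not really a main obstacle here: the one-dimensionality assumption makes the argument essentially linear-algebraic, reducing the question to whether a sum of scalars is zero. The only things to be careful about are the Krull--Schmidt reduction (so that one really has a direct sum indexed by objects of $S$ rather than a mere summand of such) and the trivial edge case $f=0$, which are handled at the start and at the end of the argument respectively.
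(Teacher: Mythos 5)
Your proof is correct and takes essentially the same route as the paper's: decompose the middle term into indecomposables from $S$ using the Krull--Schmidt property, observe that some component composite $h_jg_j$ must be nonzero when $f\neq 0$, and use $\dim\Hom_{\cat{C}}(X,Y)\leq 1$ to rescale so that $f$ factors over the single object $Z_j$. The explicit bookkeeping with the basis element $f_0$ and the scalars $\LL_i$ is just a spelled-out version of the paper's observation that $f$ and $h_jg_j$ are linearly dependent.
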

\begin{proof}
All of the content of the statement is in the `only if' direction, in the case that \(f\ne0\).
Assume \(f\) factors as the composition
\[\begin{tikzcd}
X\arrow{r}{g}&\displaystyle\bigoplus_{i=1}^kS_i\arrow{r}{h}&Y
\end{tikzcd}\]
for some \(S_i\in S\).
Writing \(g_i\colon X\to S_i\) and \(h_i\colon S_i\to Y\) for the components of \(g\) and \(h\), this means that \(0\ne f=\sum_{i=1}^k h_ig_i\), and so \(h_jg_j\ne0\) for some \(j\).
Since \(\dim\Hom_{\cat{C}}(X,Y)=1\), the morphisms \(f\) and \(h_jg_j\) are linearly dependent, and so \(f\) factors over \(S_j\).
\end{proof}

\begin{defn}
Let \(\gamma\) and \(\delta\) be arcs in \((\disc,\pts_n)\) which either cross or meet at an endpoint.
Rotating \(\gamma\) clockwise onto \(\delta\) sweeps out two intervals in the boundary of the disc, as indicated in Figure~\ref{f:hourglass}; note that if \(\gamma\) and \(\delta\) meet at a marked point on the boundary of the disc then one of these intervals consists solely of this single marked point.
We write \([\gamma,\delta]\) for the set of arcs with one endpoint in each of these boundary intervals.
\begin{figure}
\begin{minipage}{0.75\textwidth}
\begin{tikzpicture}[baseline=0,scale=2.5,every to/.style={hyperbolic disc}]
\draw [fill=black!10] (60:1) to (0,0) to (135:1) arc (135:60:1) (60:1);
\draw [fill=black!10] (315:1) to (0,0) to (240:1) arc (240:315:1) (315:1);
\draw [fill=black] (0,0) circle (0.02);
\draw (60:1) edge[pos=.7, "\(\delta\)",very thick] (0,0); \draw (0,0) edge[very thick] (240:1);
\draw (135:1) edge[pos=.7, "\(\gamma\)",swap,very thick] (0,0); \draw (0,0) edge[very thick] (315:1);
\draw (115:1) edge["\(\alpha\)"] (0,0); \draw (0,0) edge (295:1);
\draw (-75:1) to (-112:1);
\draw (260:0.65) node {\(\beta\)};
\draw [-angle 90, dashed, domain=3:72] plot ({0.24*cos(135-\x)},{0.24*sin(135-\x)});
\draw [-angle 90, dashed, domain=3:72] plot ({0.24*cos(315-\x)},{0.24*sin(315-\x)});
\draw [gray,very thick] (0,0) circle (1);
\end{tikzpicture}\hfill
\begin{tikzpicture}[baseline=0,scale=2.5,every to/.style={hyperbolic disc}]
\draw [fill=black!10] (315:1) to (100:1) to (210:1) arc (210:315:1) (315:1);
\draw[very thick] (100:1) to (210:1);
\draw[very thick] (100:1) to (315:1);
\draw (100:1) to (265:1);
\draw (210:1) to (270:1);
\draw (225:0.45) node {\(\beta\)};
\draw (275:0.15) node {\(\alpha\)};
\draw (60:0.3) node {\(\gamma\)};
\draw (145:0.4) node {\(\delta\)};
\draw [-angle 90, dashed, domain=0:18] plot ({cos(100)+0.8*cos(-75-\x)},{sin(100)+0.8*sin(-75-\x)});
\draw [gray,very thick] (0,0) circle (1);
\end{tikzpicture}
\end{minipage}
\caption{Regions, and hence boundary intervals, swept out by rotating \(\gamma\) clockwise onto \(\delta\).
In each figure, \(\alpha\) is an example of an arc in \([\gamma,\delta]\), whereas \(\beta\) is not.}
\label{f:hourglass}
\end{figure}
\end{defn}

\begin{lem}
\label{l:hourglass}
Let \(\gamma\ne\delta\) be arcs in \((\disc,\pts_n)\).
If \(\Hom_{\ITcat_n}(\gamma,\delta)\ne0\) then \(\gamma\cap\delta\ne\varnothing\).
Moreover, a non-zero morphism \(f\colon\gamma\to\delta\) in \(\ITcat_n\) factors over an object in \(\add{S}\), for some set \(S\) of arcs, if and only if \(S\cap [\gamma,\delta]\ne\varnothing\).
\end{lem}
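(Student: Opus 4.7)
The plan is to separate the two assertions of the lemma and, for the second, reduce it to a combinatorial statement about cyclic orderings via Lemma~\ref{l:1dim}.

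For the first assertion, I would start from the identity $\Hom_{\ITcat_n}(\gamma,\delta) = \Hom_{\ITcat_n}(\gamma,\susp\susp^{-1}\delta)$, so that the hypothesis forces $\gamma$ and $\susp^{-1}\delta$ to cross transversely by Definition~\ref{d:IguTod-cat}. Writing $\gamma = (u,v)$ and $\susp^{-1}\delta = (p,q)$, transverse crossing gives an interlacing $p < u < q < v < p^+$, so that $\delta$ has endpoints $\susp p, \susp q$ with $\susp p \leq u$ and $\susp q \leq v$. The assumption $\gamma \neq \delta$ rules out both of these being equalities, so either both inequalities are strict (yielding a fresh interlacing $\susp p < u < \susp q < v$, hence $\gamma$ and $\delta$ themselves cross transversely) or exactly one is an equality (in which case $\gamma$ and $\delta$ share a boundary endpoint). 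In either case $\gamma\cap\delta\neq\varnothing$.

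For the second assertion, I first apply Lemma~\ref{l:1dim}: since $\ITcat_n$ has Hom-spaces of dimension at most one, factoring over $\add S$ is equivalent to factoring over some single arc $\alpha \in S$. Moreover, by the composition rule of Definition~\ref{d:IguTod-cat}, once $\Hom_{\ITcat_n}(\gamma,\delta)\ne 0$ is fixed, $f$ factors over $\alpha$ precisely when both $\Hom_{\ITcat_n}(\gamma,\alpha)$ and $\Hom_{\ITcat_n}(\alpha,\delta)$ are non-zero (the composition is then automatically a non-zero scalar multiple of $f$). Translating back through the definition, the task thus reduces to showing that, for $\alpha = (a,b)$, the two interlacing conditions ``$\gamma$ crosses $\susp^{-1}\alpha$ transversely'' and ``$\alpha$ crosses $\susp^{-1}\delta$ transversely'' are jointly equivalent to $\alpha \in [\gamma,\delta]$.

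The last step is the bulk of the proof. The first interlacing condition forces one of $a,b$ into the open cyclic interval $(\susp u, \susp v)$ and the other into $(\susp v, (\susp u)^+)$; the second forces one into $(p,q)$ and the other into $(q, p^+)$. Intersecting the permissible regions using the inequalities $p < \susp p \leq u$, $\susp u \leq q < \susp q \leq v$ and $\susp v \leq p^+$ implicit in the setup shows that exactly one endpoint must lie in $[\susp p, u]$ and the other in $[\susp q, v]$, which are the two boundary intervals swept out by rotating $\gamma$ clockwise onto $\delta$, i.e.\ $\alpha \in [\gamma,\delta]$. The main obstacle is purely bookkeeping with cyclic inequalities, in particular handling the degenerate cases $\susp p = u$ or $\susp q = v$, where $\gamma$ and $\delta$ share an endpoint and one of the intervals collapses to a single point; these fit into the same framework but must be checked explicitly.
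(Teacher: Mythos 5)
Your handling of the first assertion and your reduction, via Lemma~\ref{l:1dim}, to factoring over a single arc $\alpha\in S$ both agree with the paper's proof. The gap is in the step immediately after: it is \emph{not} true in $\ITcat_n$ that the composite of non-zero maps $\gamma\to\alpha\to\delta$ is automatically non-zero whenever all three of $\Hom_{\ITcat_n}(\gamma,\alpha)$, $\Hom_{\ITcat_n}(\alpha,\delta)$ and $\Hom_{\ITcat_n}(\gamma,\delta)$ are one-dimensional. Take ten consecutive marked points $1,\dotsc,10$ in one interval of $\pts_n$ and set $\gamma=(1,6)$, $\alpha=(3,8)$, $\delta=(5,10)$. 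All three Hom-spaces equal $\KK$ (for instance $\Hom_{\ITcat_n}(\gamma,\alpha)\ne0$ because $\gamma$ crosses $\susp^{-1}\alpha=(2,7)$ transversely, and similarly for the other two pairs), but $[\gamma,\delta]$ consists of arcs with one endpoint in $\{5,6\}$ and the other in $\{10,1\}$, so $\alpha\notin[\gamma,\delta]$: the arc $(3,8)$ passes through the \emph{opposite} pair of sectors at the crossing of $\gamma$ and $\delta$, the ones swept out by rotating $\gamma$ anticlockwise onto $\delta$. For such $\alpha$ the composite of the non-zero maps is zero, so $f$ does not factor over $\alpha$. This vanishing is exactly what Igusa--Todorov's Lemma~2.4.2 (which the paper's proof invokes at this point) controls; it reflects the orbit-category nature of $\ITcat_n$, where the two maps being composed may live over incompatible lifts to the underlying derived category. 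If the composition rule of Definition~\ref{d:IguTod-cat} were read completely literally, as you do, then Lemma~\ref{l:hourglass} itself would be false, so that rule must be understood as shorthand, with the precise criterion supplied by Igusa--Todorov.

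The same counterexample shows where your concluding interval computation goes wrong: intersecting the two interlacing conditions does \emph{not} force one endpoint into each of the two intervals defining $[\gamma,\delta]$. There is a second admissible configuration, in which one endpoint of $\alpha$ lies strictly between an endpoint of $\gamma$ and the adjacent endpoint of $\susp^{-1}\delta$ on each side (in the example, the endpoints $3$ and $8$). Your case analysis silently discards this configuration, and it is precisely the configuration for which the composite vanishes. To repair the argument you would either need to cite Igusa--Todorov's composition criterion, as the paper does, or prove directly that composites vanish for arcs in the opposite pair of sectors, which requires more than the data recorded in Definition~\ref{d:IguTod-cat}.
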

\begin{proof}
By Theorem~\ref{t:ITcat}, if \(\Hom_{\ITcat_n}(\gamma,\delta)\ne0\), then the arcs \(\gamma\) and \(\susp^{-1}\delta\) have a transverse intersection, which can only happen if \(\gamma\cap\delta\ne\varnothing\); while \(\gamma\) and \(\delta\) may not themselves have a transverse intersection, they must at least share an endpoint.

Now by Lemma~\ref{l:1dim}, a non-zero morphism \(f\colon\gamma\to\delta\) factors over \(\add{S}\) if and only if it factors over a single arc \(\alpha\in S\).
There must be intersections (possibly at endpoints) between \(\alpha\) and \(\gamma\) and between \(\alpha\) and \(\delta\) so that there exist non-zero maps \(\gamma\to\alpha\) and \(\alpha\to\delta\).
In this case, the two maps compose to a non-zero map \(\gamma\to\delta\) (and hence, after rescaling if necessary, to \(f\)) if and only if \(\alpha\in [\gamma,\delta]\), by \cite[Lem.~2.4.2]{IguTod-CyclPoset}, taking \(Z=M_n\) and \(\phi=\susp\).
Note for comparison that Igusa and Todorov's condition (2.1) becomes
\[x_0\leq y_0< \susp^{-1}x_1<x_0^+,\quad x_1\leq y_1<\susp^{-1}x_0<x_0^+,\]
with notation as in Definition~\ref{d:cycl-ineq}.
\end{proof}

Our main result is that we can recover a bijection between non-crossing collections of arcs and rigid objects by weakening the triangulated structure of \(\PYcat_n\) to the extriangulated structure \(\EE_{\rperp{\cat{D}}}\) from Corollary~\ref{c:subcat-substr}, exploiting the fact that \(\PYcat_n\) is defined as the Verdier quotient \(\ITcat_{2n}/\cat{D}\).

\begin{thm}
\label{t:Ext-cross}
Let \(\gamma\) and \(\delta\) be arcs in \((\disc,\pts_{2n})\).
Then
\begin{equation}
\label{eq:EE-formula}
\EE_{\rperp{\cat{D}}}(\gamma,\delta)=\begin{cases}\KK,&\text{\(\squeeze{\gamma}\) and \(\squeeze{\delta}\) cross transversely,}\\0,&\text{otherwise}.\end{cases}
\end{equation}
In particular, every arc is rigid in \((\PYcat_n,\EE_{\rperp{\cat{D}}})\).
\end{thm}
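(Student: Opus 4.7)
The plan is to reduce the formula to three non-trivial cases and analyse each by lifting to $\ITcat_{2n}$. Proposition~\ref{p:PY-exts} tells us that $\Hom_{\PYcat_n}(\gamma,\susp\delta)$ is at most one-dimensional and vanishes unless $\squeeze\gamma,\squeeze\delta$ are in one of the configurations: (a)~transverse interior crossing, (b)~sharing an accumulation-point endpoint with the clockwise angle condition, or (c)~coincident with both endpoints at accumulation points. Since $\EE_{\rperp{\cat{D}}}(\gamma,\delta)$ is a subspace of this $\Hom$, it is automatically either $0$ or $\KK$, and the task reduces to deciding whether the unique non-zero morphism $\phi\colon\gamma\to\susp\delta$ in $\PYcat_n$ factors through $\rperp{\cat{D}}$ in each of the three cases. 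The rigidity assertion is then immediate, since the formula forces $\EE_{\rperp{\cat{D}}}(\gamma,\gamma)=0$ for any $\gamma$.

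The key technical tool will be Proposition~\ref{p:Dperp}: since $\ITcat_{2n}$ is $2$-Calabi--Yau and $\cat{D}$ is closed under $\susp$, the subcategories $\lperp{\cat{D}}$ and $\rperp{\cat{D}}$ coincide, so $\loc$ induces bijections $\Hom_{\ITcat_{2n}}(\gamma,\eta)\isoto\Hom_{\PYcat_n}(\gamma,\eta)$ and $\Hom_{\ITcat_{2n}}(\eta,\susp\delta)\isoto\Hom_{\PYcat_n}(\eta,\susp\delta)$ for every $\eta\in\rperp{\cat{D}}$. Any factorisation $\phi=hg$ through such an $\eta$ in $\PYcat_n$ therefore lifts uniquely to a factorisation $h'g'\colon\gamma\to\susp\delta$ through $\eta$ in $\ITcat_{2n}$ with $\loc(h'g')=\phi$. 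Consequently, $\phi\ne 0$ factors through $\rperp{\cat{D}}$ in $\PYcat_n$ if and only if some non-zero $f\in\Hom_{\ITcat_{2n}}(\gamma,\susp\delta)$ factors through $\rperp{\cat{D}}$ in $\ITcat_{2n}$ and maps to $\phi$ under $\loc$.

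For case~(a), I will construct the factorisation explicitly. Since $\EE_{\rperp{\cat{D}}}(\gamma,\delta)$ depends only on the isomorphism classes of $\gamma,\delta$ in $\PYcat_n$, I may replace them by similar lifts whose endpoints respect the cyclic ordering, so that they themselves cross transversely in $(\disc,\pts_{2n})$. The resulting non-zero $f\in\Hom_{\ITcat_{2n}}(\gamma,\susp\delta)$ then factors, by Lemma~\ref{l:hourglass}, through any arc in the hourglass region $[\gamma,\susp\delta]$. Because the crossing point of $\squeeze\gamma$ and $\squeeze\delta$ lies in the interior of $\disc$, each of the two boundary regions defining the hourglass has non-empty interior on $\bdry\disc$; since odd-numbered intervals accumulate at every accumulation point, I can pick an $\eta\in[\gamma,\susp\delta]$ with both endpoints in odd intervals, so that $\eta\in\rperp{\cat{D}}$. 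The factorisation of $f$ through $\eta$ then descends to one of $\phi$ along $\loc$.

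The main obstacle will be cases~(b) and~(c), where $\phi\ne 0$ but every attempt to factor it through $\rperp{\cat{D}}$ should fail. The plan is again to exploit the freedom to replace $\gamma,\delta$ by similar arcs, this time choosing the lifts so that $\Hom_{\ITcat_{2n}}(\gamma,\susp\delta)=0$ outright. In case~(b), the common accumulation-point endpoint of $\squeeze\gamma$ and $\squeeze\delta$ is the image of a single even-numbered interval in $\pts_{2n}$, so I can pick similar lifts whose corresponding endpoints coincide at a single point of that interval; in case~(c), I simply take the lift $\gamma=\delta$. In both configurations the shared endpoint (or equality) precludes transverse crossing in $(\disc,\pts_{2n})$, so $\Hom_{\ITcat_{2n}}(\gamma,\susp\delta)=0$, and the key reduction then forces any candidate factorisation of $\phi$ through $\rperp{\cat{D}}$ to descend from the zero morphism, contradicting $\phi\ne 0$. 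Verifying this geometric flexibility in choosing lifts, and confirming that it genuinely kills every morphism of the form $h'g'$ coming from $\rperp{\cat{D}}$, is the part of the argument that requires the most care.
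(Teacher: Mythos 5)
Your overall strategy — reduce to the three cases of Proposition~\ref{p:PY-exts} and decide factorisation through $\rperp{\cat{D}}$ by lifting to $\ITcat_{2n}$ via Proposition~\ref{p:Dperp} — is exactly the paper's. Your treatment of cases (b) and (c) is a clean variant that works: since $\EE_{\rperp{\cat{D}}}$ is invariant under isomorphism in $\PYcat_n$, you may re-choose the lifts within their similarity classes so that they share an endpoint (case (b)) or coincide (case (c)), whence $\Hom_{\ITcat_{2n}}(\gamma,\susp\delta)=0$ and your lifting principle kills any putative factorisation. The paper instead keeps arbitrary lifts and argues that $[\gamma,\susp\delta]$ contains no arc of $\rperp{\cat{D}}$; your version trades that hourglass analysis for a check that the desired lifts exist (they do, since any point of the relevant even interval is a valid endpoint for a similar arc, and arcs sharing an endpoint of $\pts_{2n}$ never cross transversely).

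There is, however, a genuine gap in case (a). You produce a non-zero $f\in\Hom_{\ITcat_{2n}}(\gamma,\susp\delta)$ factoring through some $\eta\in\rperp{\cat{D}}\cap[\gamma,\susp\delta]$ and assert that this ``descends to one of $\phi$ along $\loc$''. But by your own reduction you must also verify that $f$ maps to $\phi$ under $\loc$, i.e.\ that $\loc(f)\ne0$; a composite of two morphisms with non-zero images can perfectly well die in the quotient, precisely when $f$ factors over $\cat{D}$. Ruling this out is not automatic: by Lemmas~\ref{l:1dim} and~\ref{l:hourglass} it amounts to showing $[\gamma,\susp\delta]\cap\cat{D}=\varnothing$, which requires the observation that no even-numbered interval contains an endpoint of both $\gamma$ and $\susp\delta$ (otherwise $\squeeze{\gamma}$ and $\squeeze{\delta}$ would meet at the corresponding accumulation point instead of crossing transversely). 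The same observation is also what actually justifies your claim that each of the two swept-out boundary segments contains a point of an odd-numbered interval — your stated reason (``odd intervals accumulate at every accumulation point'') does not by itself exclude a swept-out segment lying entirely inside one even interval. Once you insert this even-interval analysis, case (a) closes and the proof is complete.
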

\begin{proof}
Recall that \(\EE_{\rperp{\cat{D}}}(\gamma,\delta)\leq\Hom_{\PYcat_n}(\gamma,\susp\delta)\).
If \(\Hom_{\PYcat_n}(\gamma,\susp\delta)=0\) then, by Proposition~\ref{p:PY-exts}, the arcs \(\squeeze{\gamma}\) and \(\squeeze{\delta}\) do not cross transversely.
In this case we must have \(\EE_{\rperp{\cat{D}}}(\gamma,\delta)=0\), and so \eqref{eq:EE-formula} is correct.

Thus we may assume that \(\Hom_{\PYcat_n}(\gamma,\susp\delta)=\KK\), so we are in one of the cases \ref{0-limit-end}, \ref{1-limit-end} or \ref{2-limit-end} from Proposition~\ref{p:PY-exts}.
Pick a non-zero map \(\varphi\colon\gamma\to\susp\delta\) in \(\PYcat_n\).
To finish the proof, we must show that in case \ref{0-limit-end} we have \(\varphi\in(\rperp{\cat{D}})\), so that \(\EE_{\rperp{\cat{D}}}(\gamma,\delta)=\KK\), whereas in cases \ref{1-limit-end} and \ref{2-limit-end} we have \(\varphi\notin(\rperp{\cat{D}})\), so \(\EE_{\rperp{\cat{D}}}(\gamma,\delta)=0\).
We do this case by case.
\medskip

\textit{Case \ref{0-limit-end}:} In this case \(\squeeze{\gamma}\) and \(\squeeze{\delta}\) cross transversely, which means that \(\gamma\) and \(\delta\) must do also, and hence \(\Hom_{\ITcat_{2n}}(\gamma,\susp\delta)\ne0\).
We first show that there is a map \(\hat{\varphi}\in\Hom_{\ITcat_{2n}}(\gamma,\susp\delta)\) which is sent to \(\varphi\) under the localisation functor \(\loc\colon\ITcat_{2n}\to\PYcat_n\).
This amounts to showing that the induced map \(\Hom_{\ITcat_{2n}}(\gamma,\susp\delta)\to\Hom_{\PYcat_n}(\gamma,\susp\delta)=\KK\) is non-zero, and hence surjective, which will follow from showing that a non-zero map \(\psi\colon\gamma\to\susp\delta\) in \(\ITcat_{2n}\) does not factor over \(\cat{D}\).

Let \(p\) be an endpoint of \(\gamma\) and \(q\) an endpoint of \(\delta\).
If \(p\) and \(q\) were in the same even-numbered interval then the arcs \(\squeeze{\gamma}\) and \(\squeeze{\delta}\) would meet at the corresponding accumulation point of \(\pts_n\), instead of crossing transversely.
Thus, no even interval contains an endpoint of both \(\gamma\) and \(\delta\).
Since \(\susp\delta\) has endpoints in the same intervals as \(\delta\), the same conclusion applies to \(\gamma\) and \(\susp\delta\).
In particular, this means that no arc in \([\gamma,\susp\delta]\) has endpoints in the same even-numbered interval, i.e.\ that \([\gamma,\susp\delta]\cap\cat{D}=\varnothing\).
It then follows from Lemma~\ref{l:hourglass} that \(\psi\) does not factor over \(\cat{D}\).

Thus, we may pick a lift \(\hat{\varphi}\) of \(\varphi\) from \(\PYcat_n\) to \(\ITcat_{2n}\).
To show that \(\varphi\) factors over \(\rperp{\cat{D}}\), and hence that \(\varphi\in\EE_{\rperp{\cat{D}}}(\gamma,\delta)=\KK\), it suffices to show that this is true of \(\hat{\varphi}\).
By the argument in the preceding paragraph, each of the two boundary segments swept out by rotating \(\gamma\) clockwise onto \(\delta\) must contain a point from an odd-numbered interval.
Choosing one such point from each boundary segment, the arc \(\alpha\) between them lies in both \(\rperp{\cat{D}}\) and \([\gamma,\susp\delta]\), the second property meaning that \(\hat{\varphi}\) factors over \(\alpha\) by Lemma~\ref{l:hourglass}.
\medskip

\textit{Case \ref{1-limit-end}:}  In this case, the arcs \(\squeeze{\gamma}\) and \(\squeeze{\delta}\) are arranged as in Figure~\ref{f:1-limit-end}.
If \(\varphi\in(\rperp{\cat{D}})\) then by Lemma~\ref{l:1dim} there is an arc \(\alpha\in\rperp{\cat{D}}\) such that \(\varphi\) factors over \(\alpha\) in \(\PYcat_n\), i.e.\ \(\varphi=\varphi_2\circ\varphi_1\) for morphisms \(\varphi_1\colon\gamma\to\alpha\) and \(\varphi_2\colon\alpha\to\susp\delta\) in \(\PYcat_n\).

Since \(\alpha\in\rperp{\cat{D}}=\lperp{\cat{D}}\), the morphisms \(\varphi_1\) and \(\varphi_2\) lift uniquely to morphisms \(\hat{\varphi}_1\) and \(\hat{\varphi}_2\) in \(\ITcat_{2n}\) by Proposition~\ref{p:Dperp}.
This yields a lift \(\hat{\varphi}\defeq\hat{\varphi}_2\circ\hat{\varphi}_1\) of \(\varphi\) which still factors over \(\alpha\).

Since \(\varphi\ne0\), we must also have \(\hat{\varphi}\ne0\), so \(\gamma\) and \(\susp\delta\) meet by Lemma~\ref{l:hourglass}. By the same lemma, the arc \(\alpha\) lies in \([\gamma,\susp\delta]\).
But one endpoint of \(\gamma\) must lie in the same even-numbered interval as one endpoint of \(\susp\delta\) (equivalently of \(\delta\)) because \(\squeeze{\gamma}\) and \(\squeeze{\delta}\) meet at an accumulation point. Moreover, \(\squeeze{\delta}\) is a clockwise rotation of \(\squeeze{\gamma}\) around this accumulation point, so the two endpoints of \(\gamma\) and \(\Sigma\delta\) in this common interval must bound one of the two clockwise boundary segments involved in the definition of \([\gamma,\delta]\). (Comparing to Figure~\ref{f:hourglass}, this means that the relevant pair of endpoints is either the north or the south pair, not the east or west pair.) Thus every arc in \([\gamma,\susp\delta]\) has one endpoint in this even interval, and hence \([\gamma,\susp\delta]\) contains no arcs from \(\rperp{\cat{D}}\) by Lemma~\ref{l:dperp}.
We conclude from this contradiction that \(\varphi\not\in(\rperp{\cat{D}})\).
\medskip

\textit{Case \ref{2-limit-end}:}  In this case \(\squeeze{\gamma}\) is an arc between accumulation points of \(\close{\pts}_n\), and so \(\susp\gamma=\gamma\) in \(\PYcat_n\).
It follows that \(\Hom_{\PYcat_n}(\gamma,\susp\gamma)=\Hom_{\PYcat_n}(\gamma,\gamma)=\KK\) is spanned by \(\id_{\gamma}\).
Since \(\gamma\notin\rperp{\cat{D}}\) (see Remark~\ref{r:dperp}), we have \(\id_{\gamma}\notin(\rperp{\cat{D}})\) as required.
\end{proof}

\begin{cor}
\label{c:w2cy}
For objects \(X,Y\in\PYcat_n\), we have \(\dim_{\KK}\EE_{\rperp{\cat{D}}}(X,Y)=\dim_{\KK}\EE_{\rperp{\cat{D}}}(Y,X)\).
In particular, the extriangulated category \((\PYcat_n,\EE_{\rperp{\cat{D}}})\) is weakly \(2\)-Calabi--Yau.
\end{cor}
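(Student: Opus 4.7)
The plan is to reduce to the indecomposable case and then invoke Theorem~\ref{t:Ext-cross} together with the obvious symmetry of the transverse crossing condition.

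First I would note that $\EE_{\rperp{\cat{D}}}$, as a subfunctor of the biadditive Hom functor $\Hom_{\PYcat_n}(\blank,\susp\blank)$, is itself biadditive: it sends finite direct sums in either variable to finite direct sums of $\KK$-vector spaces. By Remark~\ref{r:KS}, the category $\PYcat_n$ is Krull--Schmidt, so we may write
\[
X\iso\bigdsum_{i=1}^m X_i,\qquad Y\iso\bigdsum_{j=1}^k Y_j
\]
with each $X_i$ and $Y_j$ indecomposable. Biadditivity then gives
\[
\dim_\KK\EE_{\rperp{\cat{D}}}(X,Y)=\sum_{i,j}\dim_\KK\EE_{\rperp{\cat{D}}}(X_i,Y_j),
\]
and similarly with the roles of $X$ and $Y$ exchanged, so it is enough to prove the symmetry $\dim_\KK\EE_{\rperp{\cat{D}}}(X_i,Y_j)=\dim_\KK\EE_{\rperp{\cat{D}}}(Y_j,X_i)$ for indecomposables.

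Each indecomposable object of $\PYcat_n$ is (isomorphic to) an arc in $(\disc,\pts_{2n})$ by Proposition~\ref{p:PY-bijection}, so we may assume $X_i=\gamma$ and $Y_j=\delta$ are arcs. Theorem~\ref{t:Ext-cross} then computes
\[
\dim_\KK\EE_{\rperp{\cat{D}}}(\gamma,\delta)=\begin{cases}1,&\text{if $\squeeze{\gamma}$ and $\squeeze{\delta}$ cross transversely,}\\0,&\text{otherwise,}\end{cases}
\]
and the analogous formula for $\EE_{\rperp{\cat{D}}}(\delta,\gamma)$. The relation ``$\squeeze{\gamma}$ and $\squeeze{\delta}$ cross transversely'' is manifestly symmetric in $\gamma$ and $\delta$, so the two dimensions agree. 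Summing over $i,j$ establishes the equality in the statement.

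Finally, the weakly $2$-Calabi--Yau property $\EE_{\rperp{\cat{D}}}(X,Y)=0\iff\EE_{\rperp{\cat{D}}}(Y,X)=0$ is an immediate consequence of the equality of dimensions, since a finite-dimensional vector space vanishes precisely when its dimension is zero. There is no real obstacle here: once Theorem~\ref{t:Ext-cross} is in hand, the corollary is a formal consequence of the Krull--Schmidt property and the visible symmetry of transverse crossing.
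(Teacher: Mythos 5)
Your argument is correct and is the same as the paper's: reduce to indecomposables via the Krull--Schmidt property (Remark~\ref{r:KS}) and biadditivity of $\EE_{\rperp{\cat{D}}}$, then apply Theorem~\ref{t:Ext-cross} and the symmetry of transverse crossing. The paper merely states this in one sentence, whereas you have written out the routine bookkeeping in full.
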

\begin{proof}
This follows from Theorem~\ref{t:Ext-cross} together with the fact that \(\PYcat_n\) is Krull--Schmidt and intersection is a symmetric relation on arcs.
\end{proof}

\begin{rem}
\label{r:2CY}
We suspect that \((\PYcat_n,\EE_{\rperp{\cat{D}}})\) is \(2\)-Calabi--Yau in a stronger sense, namely that there is a functorial duality
\[\EE_{\rperp{\cat{D}}}(X,Y)=\Kdual\EE_{\rperp{\cat{D}}}(Y,X)\]
for all \(X,Y\in\PYcat_n\).
Note however that \((\PYcat_n,\EE_{\rperp{\cat{D}}})\) is not Frobenius as an extriangulated category; while the only projective or injective object is \(0\), not every object admits a deflation from \(0\), so there are not enough projectives (or, similarly, enough injectives).
In particular, \((\PYcat_n,\EE_{\rperp{\cat{D}}})\) is neither triangulated, nor admits a triangulated stable category on which the Calabi--Yau property could be expressed in terms of a Serre functor.
The failure of \((\PYcat_n,\EE_{\rperp{\cat{D}}})\) to be a Frobenius extriangulated category affects much of the rest of the paper and its sequel \cite{CKaP2}: it means we need to make bespoke arguments to establish several properties that are known to hold very generally for (weakly) \(2\)-Calabi--Yau Frobenius extriangulated categories.

Further to this, \((\PYcat_n,\EE_{\rperp{\cat{D}}})\) does not have Auslander--Reiten conflations \cite{INP} in general.
Indeed, the arc \(\gamma\) between the two accumulation points in \((\disc,\close{\pts}_2)\) is not the codomain of a non-split deflation in \((\PYcat_2,\EE_{\rperp{\cat{D}}})\), because the cone of such a morphism in the triangulated category \(\PYcat_n\) would have to be a non-zero map \(\gamma\to\gamma=\susp\Gamma\), all of which are isomorphisms and hence do not factor over \(\rperp{\cat{D}}\) (cf.~Example~\ref{eg:non-conf}).
In particular, there is no Auslander--Reiten conflation ending in \(\gamma\).
The subcategory \(\rperp{\cat{D}}\) does have Auslander--Reiten conflations, however, being equivalent to the \(2\)-Calabi--Yau triangulated category \(\ITcat_n\) as in Remark~\ref{r:dperp}.

Interestingly, a construction by Christ \cite{Christ-Fukaya} also produces non-Calabi--Yau triangulated categories associated to marked surfaces, in which the expected correspondence between extensions in the category and crossings of arcs in the surface is recovered by passing to an extriangulated substructure \cite[Lem.~7.16]{Christ-Fukaya}.
In this context, the resulting extriangulated category is shown to be \(2\)-Calabi--Yau in the true, functorial, sense \cite[Prop.~5.20]{Christ-Fukaya} (and this is, roughly speaking, the defining property of the extriangulated substructure which is used).
However, Christ's construction is rather different to ours, going via the theory of (exact) \(\infty\)-categories, and applies to surfaces with finite sets of boundary marked points, and so at this time we do not know a precise relationship between his results and ours.
\end{rem}

\begin{cor}
\label{c:wct}
The bijection from Proposition~\ref{p:PY-bijection} induces a bijection \(\cat{T}\mapsto\indec{\cat{T}}\) between weak cluster-tilting subcategories of the extriangulated category \((\PYcat_n,\EE_{\rperp{\cat{D}}})\) and triangulations of \((\disc,\close{\pts}_n)\).
\end{cor}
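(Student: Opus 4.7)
The plan is to deduce the corollary by combining Theorem~\ref{t:Ext-cross} with the abstract criterion of Proposition~\ref{p:max-rigid-to-wct}, mediated by the indecomposable bijection from Proposition~\ref{p:PY-bijection}. First I would observe that any weak cluster-tilting subcategory $\cat{T} \subseteq \PYcat_n$ is closed under direct summands and under isomorphism, simply because the defining $\EE_{\rperp{\cat{D}}}$-vanishing conditions pass to summands. Since $\PYcat_n$ is Krull--Schmidt by Remark~\ref{r:KS}, this means $\cat{T}$ is completely determined by, and recovered as the additive closure of, its set $\indec(\cat{T})$ of isomorphism classes of indecomposables. So it suffices to set up the bijection on the level of indecomposables.

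Next I would translate the rigidity condition combinatorially. By Theorem~\ref{t:Ext-cross}, for indecomposables $\gamma, \delta \in \PYcat_n$ the vanishing $\EE_{\rperp{\cat{D}}}(\gamma,\delta) = 0$ is equivalent to the arcs $\squeeze{\gamma}$ and $\squeeze{\delta}$ not crossing transversely in $(\disc, \close{\pts}_n)$, and by Corollary~\ref{c:w2cy} this condition is symmetric in $\gamma$ and $\delta$. In particular, every indecomposable of $\PYcat_n$ is rigid, and an arbitrary set $S$ of indecomposables is pairwise $\EE_{\rperp{\cat{D}}}$-orthogonal if and only if the corresponding set of arcs is pairwise non-crossing.

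The final step is to apply Proposition~\ref{p:max-rigid-to-wct} to $(\PYcat_n, \EE_{\rperp{\cat{D}}})$: this category is Krull--Schmidt (Remark~\ref{r:KS}), weakly $2$-Calabi--Yau (Corollary~\ref{c:w2cy}), and every indecomposable is rigid (Theorem~\ref{t:Ext-cross}), so maximal rigid subcategories coincide with weak cluster-tilting subcategories. Combining this with the combinatorial translation above and the bijection of Proposition~\ref{p:PY-bijection}, weak cluster-tilting subcategories correspond exactly to maximal collections of pairwise non-crossing arcs in $(\disc, \close{\pts}_n)$, which by definition are the triangulations.

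I do not anticipate a substantial obstacle: the main categorical input, namely Theorem~\ref{t:Ext-cross} and the passage from maximal rigid to weak cluster-tilting, is already in place, and what remains is essentially bookkeeping. The only point that warrants a brief justification is that a triangulation of $(\disc, \close{\pts}_n)$ should be understood as a \emph{maximal} collection of pairwise non-crossing arcs (there being no finite triangulation process in the infinite setting), which is the standard convention in the infinite-gon literature such as~\cite{HolJor-InftyGon,CanFel}.
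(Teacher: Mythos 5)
Your proposal is correct and follows essentially the same route as the paper: translate rigidity via Theorem~\ref{t:Ext-cross} so that maximal rigid subcategories correspond to maximal non-crossing arc collections, then invoke Proposition~\ref{p:max-rigid-to-wct} (using Remark~\ref{r:KS} and Corollary~\ref{c:w2cy}) to identify maximal rigid with weak cluster-tilting. The extra remarks on closure under summands and on the definition of triangulation as a maximal non-crossing collection are implicit in the paper's argument but harmless to make explicit.
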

\begin{proof}
By Theorem~\ref{t:Ext-cross}, the bijection from Proposition~\ref{p:PY-bijection} between objects and arcs induces a bijection between maximal rigid subcategories of \((\PYcat_n,\EE_{\rperp{\cat{D}}})\) and maximal collections of non-crossing arcs, i.e.\ triangulations.
Since \(\PYcat_n\) is Krull--Schmidt by Remark~\ref{r:KS}, is weakly \(2\)-Calabi--Yau by Corollary~\ref{c:w2cy}, and every indecomposable object in \((\PYcat_n,\EE_{\rperp{\cat{D}}})\) is rigid by Theorem~\ref{t:Ext-cross} again, all of its maximal rigid subcategories are weak cluster-tilting by Proposition~\ref{p:max-rigid-to-wct}.
\end{proof}

\begin{rem}
In the special case of \(\PYcat_1\), August, Cheung, Faber, Gratz and Schroll \cite[Thm.~5.5]{ACFGS2} give a different classification of weak cluster-tilting subcategories, using an explicit Frobenius exact model for this triangulated category.
In this exact category, the weak cluster-tilting subcategories coincide with the maximal almost rigid subcategories.
Here a subcategory is said to be \emph{almost rigid} if any non-split extension between two of its indecomposable objects has indecomposable middle term (matching \cite[Defn.~6.1]{BGMS} for quiver representations).
Analogous Frobenius exact models of \(\PYcat_n\) are not currently known when \(n>1\).

\begin{rem}
In \cite{GHL}, Geiss, Hernandez and Leclerc describe an infinite rank cluster algebra isomorphic (up to technical modifications such as topological completion) to the Grothendieck ring of a certain category of representations of the shifted quantum affine algebra \(U_q^\mu(\widehat{\Lie{g}})\) associated to a simple Lie algebra \(\Lie{g}\).
An overview of this construction is given in \cite[\S9]{GHL}.

In the type \(\type{A}_1\) case, where \(\Lie{g}=\sl_2\), a more detailed description of the cluster algebra is available which makes use of the combinatorics of infinite marked surfaces.
This is phrased in \cite{GHL} in terms of triangulations of an `infinity-gon', but whereas this often refers to the marked surface \((\disc,\close{\pts}_1)\), by identifying \(\close{\pts}_1\) with \(\ZZ\cup\{\infty\}\), in \cite{GHL} one instead wants the set of marked points to be \(\ZZ\cup\{\pm\infty\}\).
A possible categorical model is given by a full subcategory of \(\PYcat_2\), the category associated to the completed disc with two accumulation points:
its indecomposable objects are the arcs with endpoints either in the interval labelled \(1\), or incident to at least one of the two accumulation points.
In Figure~\ref{f:PYcat2}, this full subcategory consists of the component labelled \(11\), together with the two adjacent \(\type{A}_\infty^\infty\) components, and the component with a single indecomposable object.

While this is a triangulated subcategory of \(\PYcat_2\), with its usual triangulated structure, the interest in \cite{GHL} is in triangulations featuring many arcs incident with the same accumulation point (e.g.\ \cite[Fig.~17]{GHL}), combinatorics which is better captured by our modified extriangulated structure \(\EE_{\rperp{\cat{D}}}\); see \cite[Rem.~9.31]{GHL}.
An important notion in \cite{GHL} is that of infinite (or transfinite) sequences of mutations, as also considered in \cite{BauGra} and \cite{CanFel}.
At present we do not know how to use the categorification to gain any new insight into such mutation sequences.
On the other hand, one advantage of categorification for studying cluster combinatorics is that there are many ways to relate two cluster-tilting subcategories without declaring an explicit sequence of mutations between them (or even requiring such a sequence to exist).
\end{rem}
\end{rem}

\section{Functorial finiteness}
\label{s:approx}

Our next goal is to characterise the triangulations which correspond to cluster-tilting subcategories under the bijection of Corollary~\ref{c:wct}, that is, those for which the corresponding subcategory is functorially finite.
For the categories \(\ITcat_n\), the analogous problem was solved by Gratz--Holm--Jørgensen \cite[Thm.~5.7]{GHJ}, primarily via conditions on the local structure of a triangulation around each accumulation point.
For \(\PYcat_n\), the situation turns out to be rather different: we need only rule out one particular sub-configuration of arcs.

\begin{defn}
\label{d:leapfrog}
In a marked disc \((\disc,\pts)\), an \emph{infinite leapfrog} (or \emph{infinite zig-zag} \cite{CanFel}) consists of two sets \(\{\alpha_i\}_{i\in I}\) and \(\{\beta_i\}_{i\in I}\) of arcs, indexed by the set \(I\in\{\ZZ,\ZZ_{\leq0},\ZZ_{\geq0}\}\), such that
\begin{enumerate}
\item all arcs from \(\{\alpha_i,\beta_i\}_{i\in I}\) are distinct, and none cross;
\item\label{d:leapfrog-alpha} each arc \(\alpha_i\) is incident with one endpoint of \(\beta_i\) and, if \(i-1\in I\), one endpoint of \(\beta_{i-1}\);
\item\label{d:leapfrog-beta} each arc \(\beta_i\) is incident with one endpoint of \(\alpha_i\) and, if \(i+1\in I\), one endpoint of \(\alpha_{i+1}\);
\item there is a curve \(\gamma\) (not necessarily between marked points) crossing all the arcs \(\alpha_i\) and \(\beta_j\).
\end{enumerate}
\begin{figure}
\begin{tikzpicture}[scale=3,every to/.style={hyperbolic disc}]
\draw (90:1) edge[pos=0.15, "\(\gamma\)", dashed] (270:1);
\foreach \t in {1,2,3,4}
{\draw (125+20*\t:1) to (45-20*\t:1);
\draw (45-20*\t:1) to (145+20*\t:1);}
\foreach \t in {-2,0,2}
{\draw (135+\t:0.933333) node {\(\cdot\)};
\draw (30+\t:0.933333) node {\(\cdot\)};
\draw (233+\t:0.933333) node {\(\cdot\)};
\draw (-43+\t:0.933333) node {\(\cdot\)};}
\draw (22:0.45) node {\scriptsize\(\beta_{i-1}\)};
\draw (169:0.45) node {\scriptsize\(\alpha_i\)};
\draw (-3:0.45) node {\scriptsize\(\beta_i\)};
\draw (195:0.45) node {\scriptsize\(\alpha_{i+1}\)};
\draw[gray,very thick] (0,0) circle(1);
\end{tikzpicture}
\caption{An infinite leapfrog configuration as in Definition~\ref{d:leapfrog}.
Note that half of the angles between successive arcs in the configuration are on one side of the dashed curve \(\gamma\), and the other half are on the other side, so the arcs `leap' back and forth across \(\gamma\).
The leapfrog may be bounded on one side (if \(I=\ZZ_{\leq0}\) or \(I=\ZZ_{\geq0}\)), but not both.}
\label{f:leapfrog}
\end{figure}
As in \cite{CanFel}, a \emph{fan triangulation} of \((\disc,\pts)\) is one with no infinite leapfrog.
\end{defn}

This definition is illustrated in Figure~\ref{f:leapfrog}.
We note that there is heavy overlap between conditions \ref{d:leapfrog-alpha} and \ref{d:leapfrog-beta}; if \(I=\ZZ\) then they are even equivalent.
We have stated them separately to deal symmetrically with the cases \(I=\ZZ_{\leq0}\) and \(I=\ZZ_{\geq0}\).

The arcs of a triangulation \(\tri\) crossed by a fixed arc \(\gamma\) form a union of fans, as illustrated in Figure~\ref{f:fan-cross}.
If \(\tri\) is a fan triangulation then this union is finite, since otherwise the boundaries between the fans would form an infinite leapfrog configuration.
However, the fans involved may be finite or infinite.
\begin{figure}
\begin{tikzpicture}
\draw (90:2.5) edge[dashed,"\(\gamma\)",pos=0] (270:2.5);
\draw (-3,2) edge[very thick] (3,1);
\draw (3,1) edge[very thick] (-3,0);
\draw (-3,0) edge[very thick] (3,-1);
\draw (3,-1) edge[very thick] (-3,-2);
\foreach \x/\y in {3/2, 3/0, 3/-2, -3/1, -3/-1}
{\foreach \t in {-0.09,0,0.09}
{\draw (\x,\y+\t) node {\(\cdot\)};}}
\foreach \t in {1,2,3,4}
{\draw (-3,2) edge (3,1+0.2*\t);
\draw (3,1) edge (-3,2-0.2*\t);
\draw (3,1) edge (-3,0.2*\t);
\draw (-3,0) edge (3,1-0.2*\t);
\draw (-3,0) edge (3,-1+0.2*\t);
\draw (3,-1) edge (-3,-0.2*\t);
\draw (3,-1) edge (-3,-2+0.2*\t);
\draw (-3,-2) edge (3,-1-0.2*\t);}
\end{tikzpicture}
\caption{Crossings of an arc \(\gamma\) (dashed) with the arcs in a triangulation \(\tri\) (solid), yielding a collection of fans.
The bold arcs indicate the boundaries between different fans; if the number of such arcs were infinite, they would form an infinite leapfrog, and hence this cannot happen if \(\tri\) is a fan triangulation.
The fans shown are schematics only, and various kinds of convergence behaviour appear in practice.
For example, each fan may be finite or infinite, and the two outermost fans may be either bounded or unbounded.}
\label{f:fan-cross}
\end{figure}

The main result of this section is that the fan triangulations are precisely those corresponding to the cluster-tilting subcategories of \((\PYcat_n,\EE_{\rperp{\cat{D}}})\), that is, to the functorially finite weak cluster-tilting subcategories.
 One implication is relatively straightforward.

\begin{lem}
\label{l:ct-no-leapfrog}
If \(\cat{T}\) is a rigid and contravariantly finite subcategory of \((\PYcat_n,\EE_{\rperp{\cat{D}}})\), then \(\indec{\cat{T}}\) is a fan triangulation.
\end{lem}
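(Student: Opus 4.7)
The plan is to argue by contradiction: suppose $\indec{\cat{T}}$ contains an infinite leapfrog $\{\alpha_i,\beta_i\}_{i\in I}$, and exhibit an object $X\in\PYcat_n$ for which no right $\cat{T}$-approximation can exist. By rigidity of $\cat{T}$ together with Theorem~\ref{t:Ext-cross}, the arcs of $\indec{\cat{T}}$ are pairwise non-crossing, so the leapfrog is geometrically realised as a zig-zag of disjoint arcs, and the transverse curve $\gamma$ from Definition~\ref{d:leapfrog} forces the sequences of endpoints of $\alpha_i$ (and of $\beta_i$) on each side of $\gamma$ to be monotone in the cyclic order of $\bdry\disc$. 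Since $I$ is infinite, these monotone sequences must accumulate at some point(s) of $\bdry\disc$, and because all endpoints lie in $\close{\pts}_n$, the limit points themselves lie in $\close{\pts}_n\setminus\pts_n$. Take $X$ to be the arc joining the two accumulation points in the two-sided case (or from the accumulation point to a fixed marked point beyond the leapfrog in the one-sided case).

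The next step is to show that $X$ detects infinitely many of the leapfrog arcs, in the sense that $\Hom_{\PYcat_n}(\alpha_i,X)\ne 0$ for all sufficiently large $i$. This follows by lifting $X$ and the $\alpha_i$ to representatives in $\ITcat_{2n}$ and applying a version of Lemma~\ref{l:hourglass} for morphisms into arcs incident with a point whose similarity class is an accumulation point of $\close{\pts}_n$; the key observation is that each $\alpha_i$ beyond some threshold shares an endpoint similarity class with $X$, producing a canonical non-zero morphism to $X$ in $\PYcat_n$ (using Proposition~\ref{p:Dperp} and the description of morphisms in $\PYcat_n$ that underlies Proposition~\ref{p:PY-exts}).

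Now suppose a right $\cat{T}$-approximation $f\colon T\to X$ existed, with $T=\bigoplus_{k=1}^N T_k$ a finite direct sum of indecomposables in $\cat{T}$. Each morphism $\alpha_i\to X$ factors through $T$, hence through some $T_k$, so the pigeonhole principle supplies a single summand $T_k$ receiving non-zero morphisms $\alpha_i\to T_k$ for infinitely many $i$. Combined with the non-zero maps $T_k\to X$ coming from a Lemma~\ref{l:hourglass}-style analysis, this places $T_k$ into the ``hourglass'' region connecting $\alpha_i$ to $X$ for infinitely many $i$. But $T_k$ is a single arc, non-crossing with every $\alpha_i$, and the monotone convergence of the endpoints of the $\alpha_i$ to the accumulation point at $X$ leaves room for only finitely many such hourglasses to contain $T_k$. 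This yields the desired contradiction.

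The principal technical obstacle lies in the Lemma~\ref{l:hourglass}-style analysis for $\PYcat_n$, since one must carefully track morphisms across the localisation $\ITcat_{2n}\to\PYcat_n$ and handle the subtlety that limit arcs (incident with accumulation points of $\close{\pts}_n$) do not lie in $\rperp{\cat{D}}$, so Proposition~\ref{p:Dperp} does not directly identify their Hom spaces with those in $\ITcat_{2n}$. Once this local combinatorial-to-categorical translation is in hand, the finite-hourglass step is geometrically clear, and the maximality of $\indec{\cat{T}}$ as a non-crossing collection (completing the triangulation claim) follows by the standard argument that any arc $\alpha$ non-crossing with all of $\indec{\cat{T}}$, hence rigid with $\cat{T}$ by Theorem~\ref{t:Ext-cross}, would enlarge $\cat{T}$, so contravariant finiteness forces $\alpha\in\indec\cat{T}$ via the approximation $T\to\alpha$ and rigidity of $\cat{T}\cup\{\alpha\}$.
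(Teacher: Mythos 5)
Your proof follows the paper's broad strategy --- assume the leapfrog exists, choose a test arc, show the relevant $\cat{T}$-module cannot be finitely generated, and conclude by pigeonhole --- but the construction of the test object $X$ is problematic. In the two-sided case $I=\ZZ$ the endpoint sequences $\{a_i\}$ and $\{b_i\}$ each converge to a limit both as $i\to+\infty$ and as $i\to-\infty$, so there are in general \emph{four} accumulation points $p_\pm,q_\pm$, dividing $\bdry\disc$ into two intervals carrying leapfrog endpoints and two ``empty'' intervals. An arc between two accumulation points bounding the same empty interval (say $p_+$ and $q_+$) is nested with, rather than transverse to, every leapfrog arc, so $\Hom_{\PYcat_n}(\alpha_i,X)=0$ for all $i$ by Proposition~\ref{p:PY-exts}, and your argument collapses. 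What is needed is an arc with one endpoint in each empty interval, which is exactly the transverse curve from Definition~\ref{d:leapfrog} replaced by an arc using closedness of $\close{\pts}_n$; this is what the paper does, and $\Hom_{\PYcat_n}(\cat{T},\susp\gamma)$ is then supported on the whole leapfrog.

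Relatedly, your stated mechanism for $\Hom_{\PYcat_n}(\alpha_i,X)\ne0$ --- that ``$\alpha_i$ shares an endpoint similarity class with $X$'' --- is incorrect even for a valid $X$. The leapfrog endpoints $a_i,b_i$ converge to, but are never equal to, the accumulation points, so in the $\ITcat_{2n}$ lift they lie in odd-numbered intervals while $X$'s endpoints lie in even-numbered ones; no similarity class is shared. The correct mechanism is transverse crossing of $\squeeze{\alpha_i}$ with $\squeeze{X}$, case~(a) of Proposition~\ref{p:PY-exts} --- which is why the paper works directly with $\susp\gamma$, for which that proposition is stated. Your pigeonhole step is sound and matches the paper's (each $T_k$ shares endpoints with only finitely many leapfrog arcs), though the hourglass digression is unnecessary. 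Finally, your closing deduction of maximality of $\indec\cat{T}$ from rigidity plus contravariant finiteness does not hold --- a single arc is a counterexample --- but this is not a real defect: the paper's own proof likewise only rules out leapfrogs, the triangulation hypothesis being supplied by the context (Theorem~\ref{t:ct=fan}) in which the lemma is invoked.
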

\begin{proof}
Assume that \(\cat{T}\) is rigid, and \(\indec{\cat{T}}\) contains an infinite leapfrog configuration.
Since the set of marked points \(\close{\pts}_n\) is closed, we may choose the curve \(\gamma\) from Figure~\ref{f:leapfrog} to be an arc in \(\PYcat_n\).
By \cite[Prop.~3.14]{PaqYil} (see Proposition~\ref{p:PY-exts}), the \(\cat{T}\)-module \(\Hom_{\PYcat_n}(\cat{T},\susp\gamma)\) is supported on all the arcs in the leapfrog.
By the same result, we see that for there to exist a non-zero morphism between arcs in \(\cat{T}\), these arcs must share an endpoint: they cannot cross without contradicting Theorem~\ref{t:Ext-cross}, since \(\cat{T}\) is rigid.
Thus an arbitrary arc of \(\cat{T}\) has non-zero morphisms to at most three of the arcs in the leapfrog.
It follows that \(\Hom_{\PYcat_n}(\cat{T},\susp\gamma)\) is not finitely generated, and hence \(\cat{T}\) is not contravariantly finite.
\end{proof}

\begin{lem}
\label{l:limit-arcs}
Let \(p,q\in\close{\pts}_n\), and let \(\tri\) be a triangulation.
Assume that \(\tri\) contains a sequence of arcs \(\{\alpha_i\}_{i\in\NN}\), such that \(\alpha_i\) has endpoints \(p\) and \(x_i\), and the sequence \(x_i\) converges monotonically to \(q\).
Then either \(p=q\), the points \(p\) and \(q\) are adjacent, or the arc \(\alpha\) from \(p\) to \(q\) is contained in \(\tri\).
\end{lem}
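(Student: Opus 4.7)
The plan is to argue by contradiction, using the maximality of $\tri$ as a non-crossing collection of arcs. First I would dispose of the degenerate cases: if $p = q$ or $p$ and $q$ are adjacent in $\close{\pts}_n$, there is nothing to prove, so I may assume these do not hold and let $\alpha$ be the (well-defined) arc in $(\disc,\close{\pts}_n)$ from $p$ to $q$. The goal then becomes to show $\alpha \in \tri$.

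Suppose for contradiction that $\alpha \notin \tri$. Since $\tri$ is maximal among collections of pairwise non-crossing arcs, $\alpha$ must cross some arc $\beta \in \tri$ transversely. Letting $u, v$ denote the endpoints of $\beta$, the description of transverse crossings in terms of the cyclic order of Definition~\ref{d:cycl-ineq} allows me to arrange, after possibly swapping $u$ and $v$, that
\[p < u < q < v < p^+.\]
My strategy is then to leverage the monotone convergence $x_i \to q$ to show that $\beta$ must cross $\alpha_i$ for all sufficiently large $i$, contradicting the fact that $\alpha_i$ and $\beta$ both lie in the non-crossing collection $\tri$.

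By monotonicity, for large $i$ the points $x_i$ approach $q$ from one fixed side, so either (a)~$p < x_i < q$ for all large $i$, or (b)~$q < x_i < p^+$ for all large $i$. In case~(a), since $u$ is a fixed point with $p < u < q$, monotone convergence $x_i \to q$ from below forces $u < x_i$ for $i$ large, yielding $p < u < x_i < q < v < p^+$. In case~(b), the relation $x_i \to q$ together with $q < v < p^+$ forces $x_i < v$ for $i$ large, while $u < q < x_i$ is automatic, so $p < u < x_i < v < p^+$. In either case $\beta$ crosses $\alpha_i$ transversely, producing the desired contradiction.

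The argument is essentially geometric, and I do not anticipate any serious obstacle. The main care required is tracking the two possible directions of monotone approach and keeping the inequalities strict, so that $\beta$ and $\alpha_i$ share no endpoints and their intersection is genuinely transverse; both of these follow cleanly from the hypothesis that the $x_i$ are distinct from $q$ and converge monotonically to it.
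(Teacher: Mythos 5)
Your proposal is correct and is essentially the paper's argument in contrapositive form: the paper shows directly that any arc crossing $\alpha$ must also cross $\alpha_N$ for $N$ large (hence cannot lie in $\tri$), so $\alpha$ crosses nothing in $\tri$ and lies in $\tri$ by maximality, whereas you assume $\alpha\notin\tri$, extract a crossing arc $\beta\in\tri$ by maximality, and derive the same contradiction. The key inequality manipulation with the cyclic order is identical, and your explicit treatment of both directions of monotone approach matches the paper's appeal to reversing the orientation of the disc.
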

\begin{proof}
Without loss of generality, we may assume that the sequence \(x_i\) converges to \(q\)  in the clockwise direction (against the orientation of the disc), so
\[p\leq q\leq\dotsb\leq x_2\leq x_1\leq p^+,\]
as illustrated in Figure~\ref{f:limit-arc}.
The case that \(p\leq x_i\leq q\leq p^+\) follows from this one by considering the opposite orientation of the disc.
\begin{figure}
\begin{minipage}{0.8\textwidth}
\begin{tikzpicture}[scale=2.4,every to/.style={hyperbolic disc}]
\draw (270:1) edge["\(\alpha\)", swap, dashed] (90:1);
\draw (90:1.1) node {\(q\)};
\foreach \t in {1,2,3,4,5}
{\draw (270:1) to (250-\t*20:1);
\draw (250-\t*20:1.1333) node {\(x_{\t}\)};}
\draw (270:1) to (130:1);
\draw (130:1.1333)  node {\(x_6\)};
\foreach \t in {-2.5,0,2.5}
{\draw (110+\t:0.9333) node {\(\cdot\)};}
\draw[gray,very thick] (0,0) circle(1);
\foreach \n in {1,2}
{ \draw (90-180*\n:1) circle(0.025) [fill=black];}
\draw (-90:1.1333) node {\(p\)};
\end{tikzpicture}\hfill
\begin{tikzpicture}[scale=2.4,every to/.style={hyperbolic disc}]
\draw (270:1) edge (90:1);
\draw (90:1.1) node {\(x_1\)};
\foreach \t in {3,4,5,6,7}
{\draw (270:1) to (90-\t*20:1);
\draw (90-\t*20:1.1333) node {\(x_{\t}\)};}
\draw (270:1) to (50:1);
\draw (50:1.1333) node {\(x_2\)};
\foreach \t in {-2.5,0,2.5}
{\draw (290+\t:0.9333) node {\(\cdot\)};}
\draw[gray,very thick] (0,0) circle(1);
\draw (-90:1) circle(0.025) [fill=black];
\draw (-90:1.1333) node {\(p=q\)};
\end{tikzpicture}
\end{minipage}
\caption{An illustration  of the assumptions of Lemma~\ref{l:limit-arcs}, in the cases \(p\ne q\) (left) and \(p=q\) (right).}
\label{f:limit-arc}
\end{figure}

Assume that \(p\ne q\), and these points are non-adjacent, so that there is an arc \(\alpha\) with endpoints \(p\) and \(q\).
Let \(\gamma\) be an arc, with endpoints \(x\) and \(y\), which crosses \(\alpha\), so that
\[p<x<q<y<p^+.\]
Since \(x_i\) converges to \(q\), we must have \(p<x<q\leq x_N<y<p^+\) for \(N\in\NN\) sufficiently large, and so \(\gamma\) crosses \(\alpha_N\).
Thus \(\gamma\notin\tri\).
It follows that \(\alpha\) does not cross any arc of the triangulation \(\tri\), and therefore \(\alpha\in\tri\).
\end{proof}

\begin{defn}
In the setting of Lemma~\ref{l:limit-arcs}, we say that the sequence \(\{\alpha_i\}_{i\in I}\) \emph{converges to an accumulation point} if \(p=q\), \emph{converges to a boundary segment} if \(p\) and \(q\) are adjacent, and \emph{converges to \(\alpha\)} otherwise. In the latter case, we call \(\alpha\) the \emph{limit} of the sequence \(\{\alpha_i\}_{i\in\NN}\), and write \(\alpha=\lim\alpha_i\).
\end{defn}

\begin{prop}
\label{p:fan-contra-fin}
If \(\tri=\indec{\cat{T}}\) is a fan triangulation, then \(\cat{T}\) is contravariantly finite.
\end{prop}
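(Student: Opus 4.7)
Since $\PYcat_n$ is Krull--Schmidt, every object decomposes as a finite sum of arcs, and finite generation of $\cat{T}$-modules is preserved under finite sums, so it suffices to construct a right $\cat{T}$-approximation of an arbitrary arc $\gamma$ in $(\disc,\close{\pts}_n)$ by a finite direct sum of arcs in $\tri$. The first step is to translate Hom-vanishing into geometry: applying Proposition~\ref{p:PY-exts} with $\delta=\susp^{-1}\gamma$, the arcs $\tau\in\tri$ supporting $\Hom_{\PYcat_n}(\tau,\gamma)\ne 0$ fall into two classes, up to finitely many degenerate cases at accumulation points: \emph{crossing arcs}, whose squeeze crosses $\squeeze{\gamma}$ transversely, and \emph{peripheral arcs}, which share an endpoint with $\squeeze{\gamma}$ in the appropriate angular configuration.

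Next, following Figure~\ref{f:fan-cross}, I would organise the crossing arcs of $\tri$ along $\gamma$ into maximal subchains of arcs sharing a common endpoint; these are the fans along $\gamma$, and their boundaries are exactly the pairs of arcs which, if infinite in number, would constitute an infinite leapfrog in the sense of Definition~\ref{d:leapfrog}. The fan triangulation hypothesis therefore forces only finitely many fans to appear. For each fan I pick a representative: when the fan is finite, one innermost arc suffices; when the fan is infinite, the representative is a boundary shared with an adjacent fan, or a limit arc obtained from Lemma~\ref{l:limit-arcs} which then necessarily lies in $\tri$. A symmetric argument, using $\gamma$ itself as one leg of a putative leapfrog, bounds the number of peripheral arcs through each endpoint of $\gamma$ that can contribute a non-zero Hom. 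Take $T\in\cat{T}$ to be the direct sum of this finite list of representatives, equipped with the canonical non-zero map to $\gamma$ on each summand.

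Finally I would verify that $T\to\gamma$ is a right $\cat{T}$-approximation. For peripheral contributors the factorisation is immediate from the construction, while for a crossing arc $\tau$ the map $\tau\to\gamma$ must be shown to factor through the chosen representative of its fan. Lifting to $\ITcat_{2n}$ via Proposition~\ref{p:Dperp} where possible, the factorisation is a direct application of Lemma~\ref{l:hourglass}, which identifies the maps over which $\tau\to\gamma$ factors with the arcs in the interval $[\tau,\susp\gamma]$ — this interval contains the representative by the way the fan was ordered. The main obstacle will be the bookkeeping near accumulation points: when a representative is produced as a limit via Lemma~\ref{l:limit-arcs}, or when an endpoint of $\gamma$ is itself an accumulation point, one must check both that the angular conditions of Proposition~\ref{p:PY-exts} align so as to yield non-zero Hom from the representative to $\gamma$, and that the factorisation argument survives the passage from $\ITcat_{2n}$ to $\PYcat_n$. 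This is precisely where the interaction between the fan combinatorics of $\tri$ and the description of $\PYcat_n$ as $\ITcat_{2n}/\cat{D}$ must be handled carefully.
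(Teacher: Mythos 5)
Your overall strategy coincides with the paper's: the leapfrog-free hypothesis gives finitely many fans of arcs of $\tri$ supporting $\Hom_{\PYcat_n}(\cat{T},\gamma)$, each fan contributes a single generator obtained either as an extremal arc of the fan or as the limit arc supplied by Lemma~\ref{l:limit-arcs} (which then lies in $\tri$ and, by Proposition~\ref{p:PY-exts}, in the support), and every other map factors through that generator by one-dimensionality of Hom-spaces and Lemma~\ref{l:hourglass}. One small imprecision: the correct representative of a finite fan is not an ``innermost'' arc but the \emph{clockwise-maximal} one, i.e.\ the arc receiving a non-zero morphism from every other arc of the fan; that is what makes the generation argument work.

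There is, however, a genuine gap in your treatment of the peripheral arcs. You claim that a symmetric leapfrog argument \emph{bounds the number} of arcs of $\tri$ contributing a non-zero Hom by meeting $\squeeze{\gamma}$ at an accumulation point. This is false: such arcs form a fan based at that accumulation point, and an infinite fan is perfectly compatible with $\tri$ being a fan triangulation---indeed that is the point of the name. Concretely, if $\tri$ is the fountain triangulation of Example~\ref{eg:fountain} based at an accumulation point $p$, and $\gamma$ is chosen so that $\squeeze{\gamma}$ has an endpoint at $p$, then infinitely many arcs of $\tri$ satisfy condition \ref{1-limit-end} of Proposition~\ref{p:PY-exts} relative to $\gamma$ without any crossings, so they all lie in the support of $\Hom_{\PYcat_n}(\cat{T},\gamma)$. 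You therefore cannot include ``all'' peripheral contributors in your finite sum $T$. The repair is to treat this infinite family exactly as you treat an unbounded crossing fan: its free endpoints converge monotonically to some $q$, Lemma~\ref{l:limit-arcs} places the arc $\alpha$ from $p$ to $q$ in $\tri$, and Proposition~\ref{p:PY-exts} shows $\alpha$ still supports a non-zero map to the target, so $\alpha$ is the single generator needed for this fan. This is precisely the final case of the paper's proof (the right-hand configuration of Figure~\ref{f:gamma-vs-alpha_i}); once it is incorporated, your argument goes through.
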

\begin{proof}
We abbreviate \(\HomT=\Hom_{\PYcat_n}(\cat{T},\blank)\colon\PYcat\to\Modcat{\cat{T}}\).
Let \(\gamma\in\PYcat_n\) be an arbitrary arc, and consider the \(\cat{T}\)-module \(\HomT\susp\gamma\); we aim to show that this module is finitely generated.
This is sufficient, since \(\susp\) permutes the indecomposable objects of the Krull--Schmidt category \(\PYcat_n\).

Since \(\tri\) is a fan triangulation, the support of \(\HomT\susp\gamma\) consists of a finite collection of fans; cf.~Figure~\ref{f:fan-cross}, replacing \(\gamma\) by \(\Sigma\gamma\).
Say a fan in this collection is \emph{clockwise bounded} if it has a maximal element when ordering its arcs clockwise around the common endpoint.
For example, if the arcs in Figure~\ref{f:limit-arc} (left), including \(\alpha\), form a fan in the support of \(\HomT\susp\gamma\), then this fan is clockwise bounded by \(\alpha\).

In a clockwise bounded fan, there is a non-zero map from any arc to the maximal element \(\alpha\).
As a result, if the fan is contained in the support of \(\HomT\susp\gamma\) then any non-zero element of \(\HomT\susp\gamma(\alpha)=\Hom_{\PYcat_n}(\alpha,\susp\gamma)\) generates a submodule \(N\) such that \(\HomT\susp\gamma/N\) is supported away from the chosen fan.
(Recall here that \(\HomT\susp\gamma\) is a contravariant functor on \(\cat{T}\).)
We conclude that \(\HomT\susp\gamma\) is finitely generated provided all the finitely many fans in its support are clockwise bounded.

Assume the support of \(\HomT\susp\gamma\) includes a fan which is not clockwise bounded. Then the support of \(\HomT\susp\gamma\) includes a sequence \(\{\alpha_i\}_{i\in\NN}\) as in Lemma~\ref{l:limit-arcs}, with the points \(x_i\) converging to \(q\) in the clockwise direction as in Figure~\ref{f:limit-arc}, but does not include the limit \(\alpha=\lim{\alpha_i}\) (if it exists).

By assumption, \(\Hom_{\PYcat_n}(\alpha_i,\Sigma\gamma)\ne0\) for all \(i\in\NN\).
First, assume that \(\gamma\) meets \(\alpha_i\) at \(x_i\) for some \(i\), and let \(v\) be the other endpoint of \(\gamma\).
For a non-zero morphism \(\alpha_{i-1}\to\susp\gamma\), we therefore must have
\[p<x_{i-1}=\susp x_i<\susp v<p^+.\]
But then
\[p<x_i<\susp x_i<\susp v<p^+,\]
so \(\susp\gamma\) and \(\alpha_i\) are disjoint, and there is no non-zero map from \(\alpha_i\) to \(\susp\gamma\); see Figure~\ref{f:gamma-vs-alpha_i} (left).
It follows that either \(\susp\gamma\) has one endpoint at \(p\), or \(\gamma\) crosses all the \(\alpha_i\).
We wish to show that, in either of these situations, the arc \(\alpha=\lim{\alpha_i}\) exists and satisfies \(\Hom_{\PYcat_n}(\alpha,\susp\gamma)\ne0\).
\begin{figure}
\begin{tikzpicture}[scale=2.1,every to/.style={hyperbolic disc}]
\draw (270:1) edge["\(\alpha\)", swap, dashed] (90:1);
\draw (90:1.1) node {\(q\)};
\draw (270:1) to (180:1);
\draw (180:1.2) node {\(x_{i-1}\)};
\draw (270:1) to (160:1);
\draw (160:1.1666) node {\(x_i\)};
\draw (270:1) to (140:1);
\draw (140:1.1666)  node {\(x_{i+1}\)};
\draw (235:1) to (160:1);
\draw (225:0.8) node {\(\gamma\)};
\draw (235:1.1) node {\(v\)};
\foreach \t in {-2.8,0,2.8}
{\draw (115+\t:0.9333) node {\({\cdot}\)};}
\draw[gray,very thick] (0,0) circle(1);
\foreach \n in {1,2}
{ \draw (90-180*\n:1) circle(0.025) [fill=black];}
\draw (-90:1.1333) node {\(p\)};
\end{tikzpicture}\hfill
\begin{tikzpicture}[scale=2.1,every to/.style={hyperbolic disc}]
\draw (270:1) edge["\(\alpha\)", swap] (90:1);
\draw (90:1.1) node {\(q\)};
\draw (270:1) to (180:1);
\draw (180:1.1333) node {\(x_{1}\)};
\draw (270:1) to (160:1);
\draw (160:1.1333) node {\(x_2\)};
\draw (270:1) to (140:1);
\draw (140:1.1333)  node {\(x_{3}\)};
\draw (235:1) to (90:1);
\draw (225:0.8) node {\(\gamma\)};
\draw (235:1.1) node {\(v\)};
\foreach \t in {-2.8,0,2.8}
{\draw (115+\t:0.9333) node {\({\cdot}\)};}
\draw[gray,very thick] (0,0) circle(1);
\foreach \n in {1,2}
{ \draw (90-180*\n:1) circle(0.025) [fill=black];}
\draw (-90:1.1333) node {\(p\)};
\end{tikzpicture}\hfill
\begin{tikzpicture}[scale=2.1,every to/.style={hyperbolic disc}]
\draw (270:1) edge["\(\alpha\)", near end] (90:1);
\draw (90:1.1) node {\(q\)};
\draw (270:1) to (180:1);
\draw (180:1.1333) node {\(x_{1}\)};
\draw (270:1) to (160:1);
\draw (160:1.1333) node {\(x_2\)};
\draw (270:1) to (140:1);
\draw (140:1.1333)  node {\(x_{3}\)};
\draw (270:1) to (45:1);
\draw (325:0.35) node {\(\gamma\)};
\draw (45:1.1) node {\(v\)};
\foreach \t in {-2.8,0,2.8}
{\draw (115+\t:0.9333) node {\({\cdot}\)};}
\draw[gray,very thick] (0,0) circle(1);
\foreach \n in {1,2}
{ \draw (90-180*\n:1) circle(0.025) [fill=black];}
\draw (-90:1.1333) node {\(p\)};
\end{tikzpicture}
\caption{Various ways in which the arc \(\gamma\) may interact with the sequence of arcs \(\alpha_i\).
In the left-most figure, there is no morphism \(\alpha_i\to\Sigma\gamma\).
In the other two, there are non-zero morphisms \(\alpha_i\to\Sigma\gamma\) for all \(i\), and also a non-zero morphism \(\alpha\to\Sigma\gamma\).}
\label{f:gamma-vs-alpha_i}
\end{figure}

If \(\gamma\) crosses all \(\alpha_i\), then \(\gamma\) has endpoints \(u\) and \(v\) such that
\[p<u< x_i<v< p^+.\]
However, \(\HomT\susp\gamma\) is not supported on \(\alpha\), and so \(\gamma\) and \(\alpha\) do not cross.
This means that \(\alpha_i\ne\alpha\) for any \(i\in\NN\), so \(q\) is an accumulation point, and
\[p\leq q\leq u< x_i<v<p^+.\]
Since the \(x_i\) converge to \(q\), it follows that \(u=q\), and hence \(p\ne q\) by the preceding inequality.
Since \(q\) is an accumulation point, it is not adjacent to \(p\), and so the arc \(\alpha\) between \(p\) and \(q\) exists and is in \(\tri\) by Lemma~\ref{l:limit-arcs}.
But now \(\alpha\) and \(\susp\gamma\) meet at the accumulation point \(q\) as in Proposition~\ref{p:PY-exts}(b), so that \(\Hom_{\PYcat_n}(\alpha,\susp\gamma)=\KK\) by this proposition; see Figure~\ref{f:gamma-vs-alpha_i} (centre).

Alternatively, the non-zero maps \(\alpha_i\to\susp\gamma\) for all \(i\) could arise from a meeting of \(\susp\gamma\) with these arcs at the common endpoint \(p\).
In this situation, we must have
\[p=u<v\leq x_i<p^+\]
for all \(i\), so that the non-zero morphism is from \(\alpha_i\) to \(\susp\gamma\), instead of from \(\susp\gamma\) to \(\alpha_i\); see Figure~\ref{f:gamma-vs-alpha_i} (right).
But the \(x_i\) converge to \(q\), so
\[p=u<v\leq q<p^+.\]
Thus, \(p\) and \(q\) are neither equal nor adjacent, so the arc \(\alpha\) between \(p\) and \(q\) is in \(\tri\) as before.
Moreover, there is a non-zero map \(\alpha\to\susp\gamma\) as required.
\end{proof}

\begin{thm}
\label{t:ct=fan}
The map \(\cat{T}\mapsto\indec{\cat{T}}\) is a bijection between cluster-tilting subcategories of \(\PYcat_n\) and fan triangulations of \((\disc,\close{\pts}_n)\).
\end{thm}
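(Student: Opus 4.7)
The plan is to combine Corollary~\ref{c:wct} with Lemma~\ref{l:ct-no-leapfrog} and Proposition~\ref{p:fan-contra-fin}. By Corollary~\ref{c:wct}, the map $\cat{T}\mapsto\indec\cat{T}$ is already a bijection between weak cluster-tilting subcategories and triangulations, so the statement reduces to showing that for a weak cluster-tilting $\cat{T}$, functorial finiteness is equivalent to $\indec\cat{T}$ being a fan triangulation.

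For the forward direction, suppose $\cat{T}$ is cluster-tilting. Then $\cat{T}$ is rigid (by definition, or equivalently by Theorem~\ref{t:Ext-cross}) and contravariantly finite, so Lemma~\ref{l:ct-no-leapfrog} immediately rules out any infinite leapfrog in $\indec\cat{T}$; hence $\indec\cat{T}$ is a fan triangulation.

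For the converse, assume $\indec\cat{T}$ is fan. Proposition~\ref{p:fan-contra-fin} supplies contravariant finiteness, so the only remaining task is covariant finiteness. The cleanest route is to run the argument of Proposition~\ref{p:fan-contra-fin} in the dual direction: for each arc $\gamma$, analyse the $\cat{T}^{\op}$-module $\Hom_{\PYcat_n}(\gamma,\cat{T})$ rather than $\Hom_{\PYcat_n}(\cat{T},\susp\gamma)$. By Proposition~\ref{p:PY-exts} (equivalently \cite[Prop.~3.14]{PaqYil}), the support of this module again consists of finitely many fans of arcs in $\indec\cat{T}$ (at endpoints of $\gamma$, at accumulation points meeting $\squeeze{\gamma}$, and along arcs in $\indec\cat{T}$ crossed by $\susp^{-1}\gamma$), and the absence of infinite leapfrogs forces each such fan to be bounded on the side relevant for producing a generator, namely anticlockwise-maximal instead of clockwise-maximal. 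A non-zero element of $\Hom(\gamma,\alpha)$ at such a maximal $\alpha$ generates a submodule whose quotient is supported away from the fan, so iterating over the finitely many fans yields a finite generating set. Combining this with Proposition~\ref{p:fan-contra-fin} shows $\cat{T}$ is functorially finite, hence cluster-tilting.

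The main obstacle is precisely this dualisation: the extriangulated category $(\PYcat_n,\EE_{\rperp\cat{D}})$ is only weakly $2$-Calabi--Yau (Corollary~\ref{c:w2cy}) rather than genuinely $2$-Calabi--Yau, and the Hom-configurations in Proposition~\ref{p:PY-exts}\ref{1-limit-end} at accumulation points are manifestly asymmetric in $\gamma$ and $\delta$. One therefore cannot simply invoke a Serre duality to transport approximations from one side to the other, and must instead carefully re-verify that the fan-by-fan boundedness analysis of Proposition~\ref{p:fan-contra-fin}, with its distinction between convergence to accumulation points, to boundary segments, and to genuine limit arcs of $\indec\cat{T}$, still pairs the fan triangulation hypothesis with the existence of a generator in the opposite rotational direction.
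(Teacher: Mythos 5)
Your proposal matches the paper's proof in structure: reduce via Corollary~\ref{c:wct}, obtain the forward direction from Lemma~\ref{l:ct-no-leapfrog}, and deduce contravariant finiteness from Proposition~\ref{p:fan-contra-fin}. The one place where the paper is slicker than your plan is the covariant half of the converse. You propose to re-run the whole fan-by-fan boundedness analysis with all rotational directions reversed, and flag the asymmetry of Proposition~\ref{p:PY-exts}\ref{1-limit-end} as the worry. The paper avoids this entirely: it notes that $\indec\cat{T}^\op$ is the same set of arcs $\tri$ viewed as a triangulation of the disc with the \emph{opposite} orientation, and since the infinite leapfrog condition is orientation-symmetric, $\tri$ is still a fan triangulation in the reversed disc; Proposition~\ref{p:fan-contra-fin} applied in this orientation-reversed setting (equivalently in $\PYcat_n^\op$, which is of the same type) then gives contravariant finiteness of $\cat{T}^\op$, i.e.\ covariant finiteness of $\cat{T}$, with no Serre duality and no re-verification of the boundedness analysis required. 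So your concern about the failure of genuine $2$-Calabi--Yau duality is correct but is handled by a combinatorial symmetry rather than by repeating the argument; your route would also succeed, just with more work.
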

\begin{proof}
By Corollary~\ref{c:wct} it suffices to show that a weak cluster-tilting subcategory \(\cat{T}\) is functorially finite if and only if \(\tri=\indec{\cat{T}}\) is a fan triangulation.
If \(\cat{T}\) is a cluster-tilting subcategory, then it is in particular rigid and contravariantly finite, and so \(\tri\) is a fan triangulation by Lemma~\ref{l:ct-no-leapfrog}.

Conversely, if \(\tri\) is a fan triangulation, then \(\cat{T}\) is contravariantly finite by Theorem~\ref{p:fan-contra-fin}.
It remains to show that \(\cat{T}\) is covariantly finite, or equivalently that \(\cat{T}^\op\) is contravariantly finite in \(\PYcat_n^\op\).
Note that \(\indec{\cat{T}^\op}=\tri\), now viewed as a triangulation of the disc with opposite orientation.
But this is still a fan triangulation, so we conclude by Proposition~\ref{p:fan-contra-fin} again that \(\cat{T}^\op\) is contravariantly finite, and hence \(\cat{T}\) is covariantly finite, as required.
\end{proof}

\section{Mutability}
\label{s:mutation}

In triangulations of \((\disc,\pts)\) with \(\pts\) finite, every arc in a triangulation can be flipped, i.e.\ replaced with a different arc to obtain a new triangulation.
In the corresponding cluster categories, this operation appears as Iyama--Yoshino mutation of cluster-tilting subcategories \cite{IyaYos}.
However, when \(\pts\) is infinite, whether or not an arc can be flipped depends not just on the arc itself, but also on the triangulation being considered.
Similarly, when cluster-tilting subcategories are not additively finite, they may not always be mutated at an arbitrary indecomposable object.

\begin{defn}
\label{d:mutable}
Let \(\cat{C}\) be an extriangulated category with weak cluster-tilting subcategory \(\cat{T}\), and let \(\alpha\in\cat{T}\) be an indecomposable object.
We say \(\cat{T}\) is \emph{mutable at \(\alpha\)} if there is a unique indecomposable \(\alpha'\in\cat{C}\) such that \(\alpha'\) is not isomorphic to \(\alpha\) and the subcategory \(\cat{T}'=\add(\indec(\cat{T})\setminus\{\alpha\}\cup\{\alpha'\})\) is weak cluster-tilting.
\end{defn}

In the case that \(\cat{C}\) is the extriangulated category \((\PYcat_n,\EE_{\rperp{\cat{D}}})\), Definition~\ref{d:mutable} corresponds to \cite[Defn.~2.1]{BauGra} (for the case of the marked surface \((\disc,\close{\pts}_n)\)) under the bijections from Proposition~\ref{p:PY-bijection} and Corollary~\ref{c:wct}. That is, a weak cluster-tilting subcategory in \((\PYcat_n,\EE_{\rperp{\cat{D}}})\) is mutable at an indecomposable object \(\alpha\in\cat{T}\) if and only if the triangulation \(\indec{\cat{T}}\) is mutable at the arc \(\alpha\), in the sense of \cite[Defn.~2.1]{BauGra}.

Let \(\cat{T}\) be a rigid and additively closed subcategory of \((\PYcat_n,\EE_{\rperp{\cat{D}}})\).
Let \(\alpha\in\indec{\cat{T}}\) be an arc, with endpoints \(u,v\in\close{\pts}_n\).
We abbreviate \(\cat{T}\setminus\{\alpha\}=\add(\indec(\cat{T}\setminus\{\alpha\})\), and write
\begin{align*}
L_{\cat{T}}(\alpha,u)&=\{w\in\close{\pts}_n:\text{\(u<w<v<u^+\) and there is an arc between \(u\) and \(w\) in \(\cat{T}\)}\},\\
R_{\cat{T}}(\alpha,u)&=\{w\in\close{\pts}_n:\text{\(u<v<w<u^+\) and there is an arc between \(u\) and \(w\) in \(\cat{T}\)}\}.
\end{align*}
For an example, see Figure~\ref{f:apps} below, in which \(u^L\in L_{\cat{T}}(\alpha,u)\) and \(u^R\in R_{\cat{T}}(\alpha,u)\) (where \(\cat{T}\) is some rigid subcategory containing all of the arcs displayed in this figure).
Informally speaking, \(L_{\cat{T}}(\alpha,u)\) consists of the endpoints of arcs in \(\cat{T}\) obtained by rotating \(\alpha\) clockwise around \(u\), while \(R_{\cat{T}}(\alpha,u)\) consists of the endpoints of arcs in \(\cat{T}\) obtained by rotating \(\alpha\) anticlockwise around \(u\).

Since \(L_{\cat{T}}(\alpha,u)\) is a subset of the anticlockwise boundary segment from \(u\) to \(v\), and \(R_{\cat{T}}(\alpha,u)\) a subset of the anticlockwise boundary segment from \(v\) to \(u\), each inherits a total order from the cyclic order on \(\close{\pts}_n\).

\begin{lem}
\label{l:next-arc}
For \(\cat{T}\) and \(\alpha\) as above, if \(\alpha\) admits a left \((\cat{T}\setminus\{\alpha\})\)-approximation, then \(L_{\cat{T}}(\alpha,u)\) is either empty or has a maximum.
Dually, if \(\alpha\) admits a right \((\cat{T}\setminus\{\alpha\})\)-approximation, then \(R_{\cat{T}}(\alpha,u)\) is either empty or has a minimum.
\end{lem}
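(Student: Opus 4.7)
The plan is to prove the first statement by contrapositive; the second follows by reversing the orientation of the disc. Suppose $L_{\cat{T}}(\alpha, u)$ is non-empty and has no maximum. Since $\close{\pts}_n$ is countable, I can choose a strictly increasing cofinal sequence $w_1 < w_2 < \dotsb$ in $L_{\cat{T}}(\alpha, u)$ with supremum $w^* := \sup L_{\cat{T}}(\alpha, u) \leq v$; by the no-maximum assumption, $w^* \notin L_{\cat{T}}(\alpha, u)$. Write $\beta_i \in \cat{T} \setminus \{\alpha\}$ for the arc joining $u$ to $w_i$. A direct application of Proposition~\ref{p:PY-exts} to $\Hom_{\PYcat_n}(\alpha, \beta_i) = \Hom_{\PYcat_n}(\alpha, \susp\susp^{-1}\beta_i)$ shows that there exist non-zero morphisms $g_i \colon \alpha \to \beta_i$ in $\PYcat_n$, arising from the interleaving of the endpoints of $\alpha$ and $\susp^{-1}\beta_i$ (and hence from the transverse crossing of the squeezed arcs).

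Next, suppose for contradiction that a left $(\cat{T} \setminus \{\alpha\})$-approximation $f \colon \alpha \to T$ exists, so each $g_i$ factors through $f$. Decompose $T = \bigoplus_{k=1}^{n} T_k$ into a finite direct sum of indecomposables in $\cat{T} \setminus \{\alpha\}$. Since $\Hom$-spaces in $\PYcat_n$ between indecomposables are at most one-dimensional (as recalled in Remark~\ref{r:KS}), Lemma~\ref{l:1dim} together with the pigeonhole principle produces a single summand $T_{k_0}$ admitting non-zero compositions $\alpha \to T_{k_0} \to \beta_i$ for an infinite subset of indices $i$; since the original sequence was cofinal and monotone, this subsequence remains cofinal in $L_{\cat{T}}(\alpha, u)$.

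The crucial geometric step is to show that $T_{k_0}$ must be of the form $(u, c)$ with $u < c < v$, i.e., $c \in L_{\cat{T}}(\alpha, u)$. This uses the rigidity of $\cat{T}$ (so that $T_{k_0}$ does not cross any $\beta_i$) together with the non-vanishing of both $\alpha \to T_{k_0}$ and $T_{k_0} \to \beta_i$ for cofinally many $i$. A case analysis via Proposition~\ref{p:PY-exts} and the cyclic order shows that an arc $T_{k_0}$ not sharing the endpoint $u$ with the fan $\{\beta_i\}$ cannot be non-crossing with the whole fan while simultaneously mapping non-trivially to cofinally many members of it (because the shifted arcs $\susp^{-1}\beta_i$ accumulate in a predictable way, and the relevant crossings persist for at most finitely many $i$ unless $T_{k_0}$ shares the pivot $u$); similarly, the non-vanishing of $\alpha \to T_{k_0}$ forces its other endpoint $c$ into $(u, v)$. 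I expect this step to be the main obstacle, requiring a careful case-by-case verification of the three possibilities in Proposition~\ref{p:PY-exts} against the constraints imposed by rigidity of $\cat{T}$.

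Once $T_{k_0} = (u, c)$ with $c \in L_{\cat{T}}(\alpha, u)$ is established, the same $\Hom$-computation applied to $T_{k_0} \to \beta_i$ yields $w_i < c$ for the cofinal subset of $i$, whence $c \geq w^*$. But $c \in L_{\cat{T}}(\alpha, u)$ forces $c \leq w^*$, so $c = w^*$, contradicting $w^* \notin L_{\cat{T}}(\alpha, u)$ and completing the proof.
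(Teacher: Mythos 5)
Your overall architecture is sound and is, in substance, the contrapositive of the paper's argument: where you take a cofinal sequence $w_1<w_2<\dotsb$ in $L_{\cat{T}}(\alpha,u)$ and pigeonhole the factorisations $\alpha\to T_{k_0}\to\beta_i$ onto a single summand, the paper argues directly, selecting among the finitely many indecomposable summands of the approximation the one whose non-$u$ endpoint $w_0$ is largest and then showing that an arc $(u,w')\in\cat{T}$ with $w'>w_0$ would admit a map from $\alpha$ that cannot factor through any summand. The issue is the step you yourself flag as ``the main obstacle'': identifying $T_{k_0}$ as an arc $(u,c)$ with $c\in L_{\cat{T}}(\alpha,u)$ and $c\geq w_i$. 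You leave this as a sketch, proposing a case analysis on Proposition~\ref{p:PY-exts} combined with rigidity of $\cat{T}$, and as written this is a genuine gap --- it is the entire content of the lemma.

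The gap is easily closed, however, and not by the route you suggest: it is exactly what Lemma~\ref{l:hourglass} delivers. A non-zero composite $\alpha\to T_{k_0}\to\beta_i$ exists if and only if $T_{k_0}\in[\alpha,\beta_i]$; since $\alpha$ and $\beta_i$ share the endpoint $u$, one of the two boundary intervals defining $[\alpha,\beta_i]$ degenerates to the single point $u$, and the other is the segment swept from $v$ clockwise to $w_i$. Hence $T_{k_0}=(u,c)$ with $w_i\leq c\leq v$, and $c<v$ because $T_{k_0}\ne\alpha$, so $c\in L_{\cat{T}}(\alpha,u)$ and $c\geq w_i$ for your cofinal set of indices. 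Note that rigidity of $\cat{T}$ plays no role here (arcs of $[\alpha,\beta_i]$ are allowed to cross $\alpha$ and $\beta_i$; the hourglass criterion is purely about endpoints), and the convergence analysis of the shifted arcs $\susp^{-1}\beta_i$ that you anticipate is unnecessary. With Lemma~\ref{l:hourglass} actually invoked at this point, your pigeonhole step (via Lemma~\ref{l:1dim} and the finiteness of the summand set) and the concluding computation $c=\sup L_{\cat{T}}(\alpha,u)\in L_{\cat{T}}(\alpha,u)$ give a correct proof.
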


\begin{proof}
Assume \(\alpha\) admits a left \((\cat{T}\setminus\{\alpha\})\)-approximation \(\varphi\colon \alpha\to \ell\).
If \(L_{\cat{T}}(\alpha,u)\ne\varnothing\), then there is an arc \(\gamma\in\cat{T}\) between \(u\) and some point of \(L_{\cat{T}}(\alpha,u)\).
By Lemma~\ref{l:hourglass}, there is a non-zero morphism \(\psi\colon \alpha\to\gamma\).
Since \(\gamma\ne \alpha\) by construction, \(\psi\) factors over \(\ell\) (via \(\varphi\)).

By Lemma~\ref{l:1dim}, in fact \(\psi\) must factor over an indecomposable direct summand \(\lambda\) of \(\ell\).
By Lemma~\ref{l:hourglass} again, this happens if and only if \(\lambda\) is an arc between \(u\) and some \(w\in L_{\cat{T}}(\alpha,u)\).
Since \(\ell\) has finitely many indecomposable summands, we may therefore let \(w_0\) be the maximal element of \(L_{\cat{T}}(\alpha,u)\) such that the arc \(\lambda_0\) from \(u\) to \(w_0\) is an indecomposable summand of \(\ell\).

We claim that \(w_0\) is the desired maximal element of \(L_{\cat{T}}(\alpha,u)\).
By way of contradiction, let \(\beta\) be an arc of \(\cat{T}\) from \(u\) to \(w'\), such that \(u<w_0<w'<v<u^+\) (so that in particular \(w'\in L_{\cat{T}}(\alpha,u)\) is above \(w_0\) in the total ordering).
Then there is a non-zero morphism \(\xi\colon \alpha\to\beta\) which, since \(\beta\ne \alpha\), must factor over \(\varphi\).
But using Lemma~\ref{l:hourglass} again, we see that \(\xi\) does not factor over \(\lambda_0\), and hence cannot factor over any indecomposable summand of \(\ell\) without violating our assumption on \(w_0\).
Thus, we obtain a contradiction to Lemma~\ref{l:1dim}, from which we conclude that \(w_0\) is maximal in \(L_{\cat{T}}(\alpha,u)\).

The statement about \(R_{\cat{T}}(\alpha,u)\) follows, using the opposite orientation on \(\disc\).
\end{proof}

Applying Lemma~\ref{l:next-arc} to both endpoints of \(\alpha\in\cat{T}\), we see that if \(\alpha\) admits both left and right \(\cat{T}\setminus\{\alpha\}\)-approximations, then we are in the situation of Figure~\ref{f:apps}.
Here 
\[u^L=
\begin{cases}\susp u,&L_{\cat{T}}(\alpha,u)=\varnothing,\\
\max L_{\cat{T}}(\alpha,u),&\text{otherwise},
\end{cases}\quad
u^R=
\begin{cases}\susp^{-1} u,&R_{\cat{T}}(\alpha,u)=\varnothing,\\
\min R_{\cat{T}}(\alpha,u),&\text{otherwise},
\end{cases}\]
with analogous notation at \(v\).
\begin{figure}
\begin{tikzpicture}[scale=2.4,every to/.style={hyperbolic disc}]
\draw (135:1) to (60:1);
\draw (-45:1) to (30:1);
\draw (-45:1) to (-120:1);
\draw (-150:1) to (135:1);
\draw (135:1) edge node[above] {\(\alpha\)} (-45:1);
\draw[dotted] (-120:1) to (-150:1);
\draw[dotted] (30:1) to (60:1);
\draw (60:1.125) node {\(u^R\)};
\draw (-150:1.125) node {\(u^L\)};
\draw (-120:1.125) node {\(v^R\)};
\draw (30:1.15) node {\(v^L\)};
\draw (135:1.1) node {\(u\)};
\draw (-45:1.1) node {\(v\)};
\node at (45:0.62) {\(\beta'\)};
\node at (225:0.62) {\(\beta\)};
\node at (90:0.6) {\(\rho_1\)};
\node at (180:0.6) {\(\lambda_1\)};
\node at (270:0.6) {\(\rho_2\)};
\node at (0:0.6) {\(\lambda_2\)};
\draw [dashed,domain=297:335] plot ({cos(135)+0.5*cos(\x)}, {sin(135)+0.5*sin(\x)});
\draw [dashed,domain=117:155] plot ({cos(-45)+0.5*cos(\x)}, {sin(-45)+0.5*sin(\x)});
\draw[gray,very thick] (0,0) circle(1);
\end{tikzpicture}
\caption{Left and right \((\cat{T}\setminus\{\alpha\})\)-approximations to an arc \(\alpha\) in a rigid and additively closed subcategory \(\cat{T}\) of \((\PYcat_n,\EE_{\rperp{\cat{D}}})\).
The curves \(\lambda_i\) and \(\rho_i\) may be arcs or boundary segments, and no other arc of \(\cat{T}\) cuts the angles between \(\alpha\), \(\lambda_i\) and \(\rho_i\) at \(u\) and \(v\), by the definition of \(u^L\), \(u^R\), \(v^L\) and \(v^R\).
In particular, \(\add(\cat{T}\cup\{\beta,\beta'\})\) is rigid.}
\label{f:apps}
\end{figure}
We are now ready to prove the main theorem of the section.

\begin{thm}
\label{t:mutability}
Let \(\cat{T}\) be a weak cluster-tilting subcategory of \((\PYcat_n,\EE_{\rperp{\cat{D}}})\), let \(\tri=\indec{\cat{T}}\) be the corresponding triangulation (Corollary~\ref{c:wct}) and let \(\alpha\in\tri\) be an arc.
Then the following are equivalent:
\begin{enumerate}
\item\label{t:mutability-mutable} \(\cat{T}\) is mutable at \(\alpha\);
\item\label{t:mutability-diagonal} \(\alpha\) is the diagonal of a quadrilateral in \(\tri\);
\item\label{t:mutability-approx} \(\alpha\) admits left and right approximations by \(\cat{T}\setminus\{\alpha\}=\add(\indec(\cat{T})\setminus\{\alpha\})\).
\end{enumerate}
When these conditions are satisfied, mutating \(\cat{T}\) at \(\alpha\) corresponds, under the bijection from Corollary~\ref{c:wct}, to flipping the triangulation \(\tri\) at \(\alpha\).
\end{thm}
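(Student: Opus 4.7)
The plan is to establish the equivalences (1)~$\Leftrightarrow$~(2) and (2)~$\Leftrightarrow$~(3); the last sentence of the theorem will then follow from the analysis of (1)~$\Leftrightarrow$~(2). For (1)~$\Leftrightarrow$~(2), I would use Corollary~\ref{c:wct} to translate mutability of $\cat{T}$ at $\alpha$ into the combinatorial condition that there exists a unique arc $\alpha'\neq\alpha$ for which $\tri\setminus\{\alpha\}\cup\{\alpha'\}$ is again a triangulation. Any such $\alpha'$ must be non-crossing with every arc of $\tri\setminus\{\alpha\}$, and so it lies in the region $Q_\alpha$ obtained from $\disc$ by deleting the arcs of $\tri\setminus\{\alpha\}$, which is precisely the union of the two faces of $\tri$ incident to $\alpha$. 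The existence and uniqueness of $\alpha'$ is then equivalent to $Q_\alpha$ being a quadrilateral with $\alpha$ as a diagonal; in that case $\alpha'$ is the other diagonal, which simultaneously yields the final sentence of the theorem.

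For (3)~$\Rightarrow$~(2), Lemma~\ref{l:next-arc} supplies $u^L,u^R,v^L,v^R$, and I would show $u^R=v^L$ by cyclic case analysis on the anticlockwise boundary from $v$ to $u$ (the equality $u^L=v^R$ being symmetric). Of the three possible cyclic positions of $u^R$ and $v^L$, the arrangement $v<u^R<v^L<u$ forces the arcs $[u,u^R]$ and $[v,v^L]$ to cross (and cyclic constraints rule out either from being a boundary segment), contradicting the rigidity of $\cat{T}$. The arrangement $v<v^L<u^R<u$ is ruled out by noting that the diagonal $[v,u^R]$ lies in the face of $\tri$ containing $\alpha$, and, by minimality of $u^R$ together with maximality of $v^L$, no arc of $\tri$ emanating from $u$ or $v$ enters the interior of this face; hence $[v,u^R]$ crosses no arc of $\tri$, so maximality of $\tri$ forces $[v,u^R]\in\tri$, whence $u^R\in L_{\cat{T}}(\alpha,v)$ exceeds $v^L=\max L_{\cat{T}}(\alpha,v)$, a contradiction.

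For (2)~$\Rightarrow$~(3), I would set $x=u^R=v^L$ and $y=u^L=v^R$, and let $\alpha'=[x,y]$ be the other diagonal of the quadrilateral. By Theorem~\ref{t:Ext-cross} we have $\EE_{\rperp{\cat{D}}}(\alpha',\alpha)=\KK$. I would then lift $\alpha$ and $\alpha'$ to crossing arcs in $\ITcat_{2n}$ (using Proposition~\ref{p:PY-bijection} and the first case in the proof of Theorem~\ref{t:Ext-cross}), and apply Theorem~\ref{t:ITcat} to the lifted quadrilateral to obtain a distinguished triangle $\alpha\to\lambda_1\dsum\lambda_2\to\alpha'\to\susp\alpha$ in $\ITcat_{2n}$, with $\lambda_1,\lambda_2$ a pair of opposite sides of the quadrilateral (a summand being taken as zero when the corresponding side is a boundary segment). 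The factorisation argument already carried out in the proof of Theorem~\ref{t:Ext-cross} shows that the image of the connecting morphism under $\loc$ lies in $\EE_{\rperp{\cat{D}}}(\alpha',\alpha)$, giving a conflation $\alpha\infl\lambda_1\dsum\lambda_2\defl\alpha'$ in $(\PYcat_n,\EE_{\rperp{\cat{D}}})$ whose middle term lies in $\cat{T}\setminus\{\alpha\}$. Since the flip triangulation $\tri\setminus\{\alpha\}\cup\{\alpha'\}$ is rigid by Corollary~\ref{c:wct}, we have $\EE_{\rperp{\cat{D}}}(\alpha',\gamma)=0$ for every $\gamma\in\cat{T}\setminus\{\alpha\}$, and the long exact sequence associated to the conflation then makes $\alpha\to\lambda_1\dsum\lambda_2$ a left approximation; the right approximation follows dually from the second Ptolemy triangle of Theorem~\ref{t:ITcat}. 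The main obstacle is this construction of the conflation: although Theorem~\ref{t:Ext-cross} supplies the non-zero extension class abstractly, pinpointing a specific conflation whose middle term consists of arcs of $\tri\setminus\{\alpha\}$ requires a Ptolemy-type descent from $\ITcat_{2n}$ and a careful check that the lifted triangle is indeed realised in the substructure $\EE_{\rperp{\cat{D}}}$.
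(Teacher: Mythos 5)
Your proposal is correct, and for the equivalences (a)$\Leftrightarrow$(b) and (c)$\Rightarrow$(b) (your (1)$\Leftrightarrow$(2) and (3)$\Rightarrow$(2)) it follows essentially the paper's route; the genuine divergence is in (b)$\Rightarrow$(c). There you build the exchange conflations $\alpha\infl\lambda_1\dsum\lambda_2\defl\alpha'$ and $\alpha'\infl\rho_1\dsum\rho_2\defl\alpha$ by lifting the quadrilateral to $\ITcat_{2n}$, applying Theorem~\ref{t:ITcat}, and using the case-\ref{0-limit-end} analysis from the proof of Theorem~\ref{t:Ext-cross} to see that the connecting morphisms survive into $\EE_{\rperp{\cat{D}}}$; the approximation property then follows from the long exact sequence of \cite{NakPal} together with $\EE_{\rperp{\cat{D}}}(\alpha',\gamma)=0=\EE_{\rperp{\cat{D}}}(\gamma,\alpha')$ for all $\gamma\in\cat{T}\setminus\{\alpha\}$, which holds because $\alpha'$ crosses no arc of $\tri\setminus\{\alpha\}$. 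This is an Iyama--Yoshino-style homological argument, whereas the paper argues purely combinatorially: by Lemma~\ref{l:hourglass}, every non-zero morphism $\alpha\to\gamma$ with $\gamma\in\tri$ already factors over $\lambda_1$ or $\lambda_2$, since no arc of $\tri$ cuts the angles at the endpoints of $\alpha$. Your route yields the exchange conflations as a by-product (the paper only records these afterwards, in a remark), at the cost of the lifting bookkeeping you rightly flag as the main obstacle; the paper's route is shorter but does not exhibit the conflations directly. Two points to tighten. First, in (a)$\Rightarrow$(b), describing $Q_\alpha$ as ``the union of the two faces of $\tri$ incident to $\alpha$'' is delicate here: an arc need not bound a face on both sides (consider the arc from the base to an accumulation point in a fountain triangulation, which is a limit of arcs of $\tri$ on one side), so it is safer to argue as the paper does, forming the quadrilateral on the four endpoints of $\alpha$ and $\alpha'$ and checking its edges cross nothing in $\tri\cap\tri'$. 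Second, in (c)$\Rightarrow$(b), when ruling out the arrangement $v<v^L<u^R<u$ you only exclude arcs of $\tri$ emanating from $u$ or $v$; you should also note that an arc of $\tri$ with neither endpoint at $u$ or $v$ crossing $[v,u^R]$ would necessarily cross $\alpha$ or $[u,u^R]$, so these cause no trouble either.
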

\begin{proof}
We first show that \ref{t:mutability-mutable} and \ref{t:mutability-diagonal} are equivalent.
By Corollary~\ref{c:wct}, \(\cat{T}\) is mutable at \(\alpha\) if and only if there is a unique triangulation \(\tri'\) such that \(\tri'=\tri\setminus\{\alpha\}\cup\{\alpha'\}\) for some arc \(\alpha'\ne\alpha\).
If \(\alpha\) is the diagonal of a quadrilateral in \(\tri\), then this is certainly the case, since we must take \(\alpha'\) to be the other diagonal of this quadrilateral: that is, \(\tri'\) is obtained by flipping \(\tri\) at \(\alpha\).

Conversely, if the arc \(\alpha'\) with the desired properties exists, then \(\alpha\) and \(\alpha'\) must cross, or \(\alpha\) would not cross any arc of \(\tri'=\tri\cap\tri'\cup\{\alpha'\}\), and \(\tri'\) would not be maximal.
Consider the quadrilateral \(Q\) whose corners are the endpoints of \(\alpha\) and \(\alpha'\), which must all be distinct since these edges cross.
By construction, the edges of \(Q\) do not cross either \(\alpha\) or \(\alpha'\).
Moreover, any arc \(\gamma\) crossing one of these edges must also cross either \(\alpha\) or \(\alpha'\).
Since \(\tri\) and \(\tri'\) are maximal collections of non-crossing arcs, it follows that no arc \(\gamma\in\tri\cap\tri'\) crosses any edge of \(Q\), and hence the edges of \(Q\) are arcs in \(\tri\) (or homotopic to the boundary).

Next we prove that \ref{t:mutability-diagonal} implies \ref{t:mutability-approx}.
Assume \(\alpha\) is contained in a quadrilateral of \(\cat{T}\), as shown in Figure~\ref{f:quad}.
Since \(\tri\) is a triangulation, no two arcs of \(\cat{T}\) cross, and so by Lemma~\ref{l:hourglass} the only morphisms between arcs of \(\cat{T}\) arise from meetings at a common endpoint.
Since any arc in \((\disc,\close{\pts}_n)\) cutting the angles between \(\alpha\), \(\lambda_i\) and \(\rho_i\) (indicated in Figure~\ref{f:quad} by the dashed curves) must cross one of these five curves, no such arc appears in \(\tri\).
Using Lemma~\ref{l:hourglass}, we see that every morphism \(\alpha\to \gamma\), with \(\gamma\in\tri\), factors over either \(\lambda_1\) or \(\lambda_2\), and so \(\alpha\) has a left \((\cat{T}\setminus\{\alpha\})\)-approximation \(\alpha\to\lambda_1\oplus\lambda_2\).
Similarly, \(\alpha\) has a right \((\cat{T}\setminus\{\alpha\})\)-approximation \(\rho_1\oplus \rho_2\to \alpha\).
\begin{figure}
\begin{tikzpicture}[scale=2.4,every to/.style={hyperbolic disc}]
\draw (135:1) to (45:1);
\draw (-45:1) to (45:1);
\draw (-45:1) to (-135:1);
\draw (-135:1) to (135:1);
\draw (135:1) edge node[right] {\(\alpha\)} (-45:1);
\node at (90:0.55) {\(\rho_1\)};
\node at (0:0.55) {\(\lambda_2\)};
\node at (-90:0.55) {\(\rho_2\)};
\node at (180:0.55) {\(\lambda_1\)};
\draw [dashed,domain=300:330] plot ({cos(135)+0.55*cos(\x)}, {sin(135)+0.55*sin(\x)});
\draw [dashed,domain=120:150] plot ({cos(-45)+0.55*cos(\x)}, {sin(-45)+0.55*sin(\x)});
\draw[gray,very thick] (0,0) circle(1);
\end{tikzpicture}
\caption{An arc \(\alpha\) contained in a quadrilateral, yielding left and right approximations.}
\label{f:quad}
\end{figure}

Finally, we show that \ref{t:mutability-approx} implies \ref{t:mutability-diagonal}.
Assuming \ref{t:mutability-approx}, it follows from Lemma~\ref{l:next-arc} that we are in the situation of Figure~\ref{f:apps}.
But then any arc \(\gamma\) of \((\disc,\close{\pts}_n)\) with one endpoint in the anticlockwise interval from \(u^L\) to \(v^R\) (inclusive) either has its other endpoint in the same interval, or crosses one of the other arcs in the figure, all of which are in \(\cat{T}\).
Thus, either \(u^R=v^L\) or the curve \(\beta\) between \(u^R\) and \(v^L\) is an arc of \(\tri\) (or a boundary segment, if \(\susp u^R=v^L\)) by maximality of \(\tri\).
In the latter case, \(\alpha\), \(\lambda_1\), \(\beta\) and \(\rho_2\) are all either boundary segments or arcs in \(\tri\), yet neither diagonal of the quadrilateral they bound may lie in \(\tri\) without violating the assumption on either \(\lambda_1\) or \(\rho_2\).
We conclude that \(u^L=v^R\).
Similarly \(u^R=v^L\), and \(\alpha\) is a diagonal of the quadrilateral with edges \(\lambda_i\) and \(\rho_i\) as in Figure~\ref{f:quad}.
\end{proof}

\begin{rem}
Note that Iyama--Yoshino \cite{IyaYos} give a direct proof of the statement `\ref{t:mutability-approx} implies \ref{t:mutability-mutable}' from Theorem~\ref{t:mutability} in the case that \(\cat{C}\) is a triangulated category.
This argument generalises in a straightforward way to the case that \(\cat{C}\) is a Frobenius extriangulated category (and \(\alpha\) is non-projective), but \((\PYcat_n,\EE_{\rperp{\cat{D}}})\) is not a category of this kind.

However, our combinatorial argument shows that when \(\alpha\in\cat{T}\) is mutable, there are exchange conflations
\[\alpha\infl\lambda_1\dsum\lambda_2\defl \alpha'\confl,\quad
\alpha'\infl \rho_1\dsum \rho_2\defl \alpha\confl\]
in \((\PYcat_n,\EE_{\rperp{\cat{D}}})\), exactly as in Iyama--Yoshino's situation.
Indeed, these are the triangles appearing in Theorem~\ref{t:ITcat}, resulting from the quadrilateral in Figure~\ref{f:quad}, which are also \(\EE_{\rperp{\cat{D}}}\)-conflations since \(\alpha\) and \(\alpha'\) cross.

In combinatorial terms, the equivalence of \ref{t:mutability-mutable} and \ref{t:mutability-diagonal} is also mentioned in \cite[Rem.~2.2]{BauGra}.
\end{rem}

\begin{eg}
\label{eg:fountain}
Let \(b\in\close{\pts}_n\).
Then the set \(\tri_b\) of all arcs in \((\disc,\close{\pts}_n)\) incident with \(b\) is a triangulation, which we call the \emph{fountain triangulation} with \emph{base} \(b\); an example is shown in Figure~\ref{f:fountain}.
Fountain triangulations are, in particular, fan triangulations, and so \(\tri_b\) corresponds to a cluster-tilting subcategory \(\cat{T}_b\) of \((\PYcat_n,\EE_{\rperp{\cat{D}}})\) by Theorem~\ref{t:ct=fan}.
\begin{figure}
\begin{tikzpicture}[scale=2.4,every to/.style={hyperbolic disc}]
\draw (270:1) to (90:1);
\foreach \t in {60,80,100,120,140}
{\draw (270:1) to (90+\t:1);
\draw (270:1) to (90-\t:1);}
\draw (270:1) to (130:1);
\draw (270:1) to (50:1);
\node at (-0.12,0.6) {\(\alpha\)};
\foreach \t in {-2.5,0,2.5}
{\draw (110+\t:0.933333) node {\(\cdot\)};
\draw (70+\t:0.933333) node {\(\cdot\)};}
\draw[gray,very thick] (0,0) circle(1);
\draw (270:1.083333) node {\(b\)};
\draw (90:1.083333) node {\(v\)};
\draw (90:1) circle(0.025) [fill=black];
\end{tikzpicture}
\caption{A fountain triangulation \(\tri_b\), consisting of all arcs in \((\disc,\close{\pts}_n)\) incident with the base \(b\).
If \(\alpha\in\tri_b\) is the arc between \(b\) and an accumulation point \(v\), then it is not mutable, since it does not lie in a quadrilateral.}
\label{f:fountain}
\end{figure}

Let \(\alpha\in\tri_b\), with endpoints \(b\) and \(v\).
If \(v\) is an accumulation point, as in Figure~\ref{f:fountain}, then \(\alpha\) is not mutable, since it does not lie in a quadrilateral in \(\tri_b\).
From Theorem~\ref{t:mutability}, it follows that \(\alpha\) does not admit left and right \((\cat{T}_b\setminus\{\alpha\})\)-approximations.
This also follows directly from Lemma~\ref{l:next-arc}, since both \(L_{\cat{T}_b}(\alpha,b)\) and \(R_{\cat{T}_b}(\alpha,b)\) are non-empty, but \(L_{\cat{T}_b}(\alpha,b)\) has no maximum, and \(R_{\cat{T}_b}(\alpha,b)\) no minimum, because \(v\) is an accumulation point.

On the other hand, if \(v\) is not an accumulation point, then \(\susp^{-1}v\), \(v\) and \(\susp v\) are distinct, cyclically consecutive, points of \(\close{\pts}_n\).
Letting \(\alpha^\pm\) be the arcs connecting \(b\) to \(\susp^{\pm1}v\) (or the appropriate boundary segments, if these points are adjacent), we see that \(\alpha\) lies in the quadrilateral bounded by \(\alpha^+\), \(\alpha^-\), and the boundary segments from \(\susp^{-1}v\) to \(v\) and \(v\) to \(\susp v\).
Thus \(\alpha\) is mutable; mutating \(\tri_b\) at \(\alpha\) replaces \(\alpha\) by the arc \(\alpha'\) between \(\susp^{-1}v\) and \(\susp v\).
\end{eg}

\begin{rem}
In some contexts in which a combinatorial mutation operation is not universally defined, this is viewed as a bug, whereby the combinatorial model is not sophisticated enough to capture all the cluster-theoretic information.
For example, a plabic graph \cite{Postnikov-PosGrass} determines a cluster algebra in which only certain mutations in the cluster algebra---those at vertices corresponding to square tiles---can be realised by a mutation operation on the graph itself.
A similar phenomenon occurs for cluster algebras defined via double wiring diagrams \cite{FomZel-DBCs} and their variations.
However, one still uses the traditional construction of the cluster algebra, in which mutations are always allowed away from a fixed set of frozen vertices; that is, it is not possible for a cluster variable to be mutable in one cluster but not in another.
In cases where these cluster structures have been categorified \cite{GLS-PFVs,JKS,Pressland-Postnikov}, the cluster-tilting objects are also always mutable at every non-projective summand, in contrast to the combinatorial models.
(In the case of plabic graphs, this issue may also be resolved at the combinatorial level by replacing the plabic graph by a more general object called a positroid weave \cite[Rem.~3.12]{CLSBW}.)

In our context, however, we suggest that the fact that arcs may be mutable in some triangulations but not others is not a bug, but an intrinsic feature of infinite rank cluster theory.
As remarked in the introduction, an arc is not mutable in a given triangulation precisely when its lambda length is expressible as the limit of a sequence of lambda lengths of other arcs in the triangulation \cite[\S3.2]{CanFel}, i.e.\ when this lambda length is not analytically independent of the others from the triangulation, despite its algebraic independence.
Thus this phenomenon has a natural interpretation in Teichmüller theory, and it is reasonable to expect this to be captured in the associated cluster theory.

Theorem~\ref{t:mutability} demonstrates that this mutability phenomenon also arises naturally in the categorification of this infinite-rank cluster combinatorics.
We note that a similar phenomenon has been observed for mutations of non-compact silting objects \cite{ALSV}, since a module may be mutable as a summand of one silting object but not another.
\end{rem}

\appendix
\section{Pullback of extriangulated substructures}
\label{appendix}

In this appendix we give a more general version of Lemma~\ref{l:functor-construction}.
For brevity, we freely use standard definitions, notation and results on extriangulated categories from \cite{NakPal}.

Let \((\cat{E},\EE,\s)\) and \((\cat{E}',\EE',\s')\) be extriangulated categories.
An extriangulated functor \(\cat{E}\to\cat{E}'\) consists of an additive functor \(F\colon\cat{E}\to\cat{E}'\) and a natural transformation \(\Gamma\colon\EE(\blank,\blank)\to\EE'(F\blank,F\blank)\) such that \(F\s(\delta)=\s'(\Gamma\delta)\) \cite[Defn.~3.15]{BTHSS}.
The fact that \(\Gamma\) is a natural transformation means that for any \(X,Z\in\cat{E}\), any \(\delta\in\EE(Z,X)\), and any morphisms \(z\colon Z'\to Z\) and \(x\colon X\to X'\), we have
\[\Gamma(z^*\delta)=(Fz)^*\Gamma\delta,\quad
\Gamma(x_*\delta)=(Fx)_*\Gamma\delta.\]
By a standard abuse of notation, we abbreviate the extriangulated functor to \(F\colon\cat{E}\to\cat{E}'\), although \(\Gamma\) is part of the data.

\begin{defn}
Let \(F\colon\cat{E}\to\cat{E}'\) be an extriangulated functor, and let \(\EE''\leq\EE'\) be an extriangulated substructure on \(\cat{E}'\); that is, \(\EE''\) is a subfunctor of \(\EE'\) such that \((\cat{E}',\EE'',\s'|_{\EE''})\) is again an extriangulated category.
For \(X,Z\in\cat{E}\), define \(F^{-1}\EE''(Z,X)\leq\EE(Z,X)\) to be the set of extensions \(\delta\in\EE(Z,X)\) such that \(\Gamma\delta\in\EE''(FZ,FX)\).
\end{defn}

\begin{prop}
\label{p:extri-pullback}
Let \(F\colon\cat{E}\to\cat{E}'\) be an extriangulated functor, and let \(\EE''\leq\EE'\) be an extriangulated substructure on \(\cat{E}'\).
Then \(F^{-1}\EE''\) is an extriangulated substructure on \(\cat{E}\), such that
\begin{enumerate}
\item\label{extri-pb-infl} a morphism \(f\colon X\to Y\) in \(\cat{E}\) is an \(F^{-1}\EE''\)-inflation if and only if \(f\) is an \(\EE\)-inflation and \(Ff\) is an \(\EE''\)-inflation,
\item\label{extri-pb-defl} a morphism \(g\colon Y\to Z\) in \(\cat{E}\) is an \(F^{-1}\EE''\)-deflation if and only if \(g\) is an \(\EE\)-deflation and \(Fg\) is an \(\EE''\)-deflation, and
\item\label{extri-pb-functor} \(F\) and \(\Gamma\) define an extriangulated functor \((\cat{E},F^{-1}\EE'',\s|_{F^{-1}\EE''})\to(\cat{E}',\EE'',\s|_{\EE''})\), and \(F^{-1}\EE''\) is the maximal extriangulated substructure on \(\cat{E}\) with this property.
\end{enumerate}
\end{prop}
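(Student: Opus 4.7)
The plan is to establish the four assertions in order: first that $F^{-1}\EE''$ is a subfunctor of $\EE$, then the inflation/deflation characterisations \ref{extri-pb-infl} and \ref{extri-pb-defl}, then using these to verify closure under composition (and hence that $F^{-1}\EE''$ is an extriangulated substructure), and finally the extriangulated functor property \ref{extri-pb-functor} together with its maximality.

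The subfunctor check is a direct naturality computation: for $\delta \in F^{-1}\EE''(Z,X)$ and morphisms $z\colon Z' \to Z$, $x\colon X \to X'$, the naturality of $\Gamma$ gives $\Gamma(x_*z^*\delta) = (Fx)_*(Fz)^*\Gamma\delta$, which lies in $\EE''(FZ',FX')$ because $\EE''$ is itself a subfunctor of $\EE'$. For the forward direction of \ref{extri-pb-infl} (the dual \ref{extri-pb-defl} being analogous), if $f$ fits into an $F^{-1}\EE''$-conflation $X \xrightarrow{f} Y \to Z \xrightarrow{\delta}$, then $F$ being extriangulated produces the $\EE'$-conflation $FX \xrightarrow{Ff} FY \to FZ \xrightarrow{\Gamma\delta}$, and $\Gamma\delta \in \EE''$ by hypothesis, so $Ff$ is an $\EE''$-inflation. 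The converse uses the standard fact that the extension class completing any given inflation is unique up to pullback along an isomorphism of cofibres, and that $\EE''$ is closed under such pullbacks (being a subfunctor): if $f$ is an $\EE$-inflation with completing extension $\delta$, and $Ff$ is an $\EE''$-inflation witnessed by some completing extension $\delta'' \in \EE''$, then $\Gamma\delta$ and $\delta''$ differ only by the action of an isomorphism, forcing $\Gamma\delta \in \EE''$.

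Closure under composition then follows easily: if $f$ and $f'$ are composable $F^{-1}\EE''$-inflations, they are $\EE$-inflations whose composite $f'f$ is again an $\EE$-inflation by the extriangulated structure on $\cat{E}$, while $Ff$ and $Ff'$ are $\EE''$-inflations whose composite $F(f'f)=Ff'\cdot Ff$ is an $\EE''$-inflation by the extriangulated substructure on $\cat{E}'$; applying \ref{extri-pb-infl} again yields $f'f$ as an $F^{-1}\EE''$-inflation. The deflation case is dual, and this is enough to conclude that $F^{-1}\EE''$ is an extriangulated substructure via the standard criterion for subfunctors (cf.\ \cite[Prop.~3.16]{HLN1} in the triangulated setting, with the general extriangulated version in \cite{Sakai}); the realisation $\s|_{F^{-1}\EE''}$ is then defined by restriction.

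Finally, \ref{extri-pb-functor} is essentially formal: the pair $(F, \Gamma|_{F^{-1}\EE''})$ is an extriangulated functor to $(\cat{E}',\EE'')$ because the compatibility $F\s|_{F^{-1}\EE''}(\delta)=\s'|_{\EE''}(\Gamma|_{F^{-1}\EE''}\delta)$ is obtained simply by restriction of $F\s(\delta)=\s'(\Gamma\delta)$, and any extriangulated substructure $\EE_0$ on $\cat{E}$ for which $(F,\Gamma|_{\EE_0})$ lands in $\EE''$ satisfies by definition $\Gamma(\EE_0)\subseteq\EE''$, i.e.\ $\EE_0 \leq F^{-1}\EE''$. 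The only real obstacle in the whole argument is the converse of \ref{extri-pb-infl}, which depends on the uniqueness-up-to-isomorphism of cofibres in an extriangulated category; the remainder reduces to diagram chases using naturality of $\Gamma$ and the corresponding properties of $\EE$ and $\EE''$.
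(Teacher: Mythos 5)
Your proposal is correct and follows essentially the same route as the paper's proof: the subfunctor check via naturality of $\Gamma$, the reduction of the substructure property to the characterisation of inflations (closure under composition then being inherited from $\EE$ and $\EE''$, with \cite[Prop.~3.16]{HLN1} handling the rest), the key converse step via uniqueness of the realising conflation up to an isomorphism of cones (\cite[Rem.~3.10]{NakPal}) together with $\EE''$ being closed under pullback along that isomorphism, and the formal verification of the functoriality and maximality in \ref{extri-pb-functor}. The only cosmetic difference is that you verify closure under composition for inflations and deflations separately, where the paper observes that one of the two suffices.
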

\begin{proof}
We first show that \(F^{-1}\EE''(\blank,\blank)\) is a subfunctor of \(\EE(\blank,\blank)\).
Let \(\delta\in\EE(Z,X)\) and \(z\colon Z'\to Z\).
Then \(\Gamma(z^*\delta)=(Fz)^*\Gamma\delta\) since \(\Gamma\) is a natural transformation.
Thus, if \(\delta\in F^{-1}\EE''(Z,X)\), we have \(\Gamma\delta\in\EE''(FZ,FX)\), and hence \((Fz)^*\Gamma\delta\in\EE''(FZ',FX)\) by functoriality.
We conclude that \(z^*\delta\in F^{-1}\EE''(Z',X)\), as required.
Functoriality in the second argument is shown similarly.

To prove that \(F^{-1}\EE''\) is an extriangulated substructure, it is enough to establish property \ref{extri-pb-infl}, concerning its inflations.
Indeed, it will then follow that the \(F^{-1}\EE''\)-inflations are closed under composition, since this is true of both \(\EE\)-inflations and \(\EE''\)-inflations, and so \(F^{-1}\EE''\) is an extriangulated substructure by \cite[Prop.~3.16]{HLN1}.

To prove \ref{extri-pb-infl}, let \(f\) be an \(\EE\)-inflation, so there is an \(\EE\)-conflation
\[\begin{tikzcd}
X\arrow[infl]{r}{f}&Y\arrow[defl]{r}{g}&Z\arrow[confl]{r}{\delta}&\phantom{}
\end{tikzcd}\]
in \(\cat{E}\).
Since \(F\) is an extriangulated functor, it follows that
\[\begin{tikzcd}
FX\arrow[infl]{r}{Ff}&FY\arrow[defl]{r}{Fg}&FZ\arrow[confl]{r}{\Gamma\delta}&\phantom{}
\end{tikzcd}\]
is an \(\EE'\)-conflation in \(\cat{E}'\).
If we also assume that \(Ff\) is an \(\EE''\)-inflation, then there is a second conflation
\[\begin{tikzcd}
FX\arrow[infl]{r}{Ff}&FY\arrow[defl]{r}{g'}&Z'\arrow[confl]{r}{\delta'}&\phantom{}
\end{tikzcd}\]
in \(\cat{E}'\), with \(\delta'\in\EE''(Z',FX)\leq\EE'(Z',FX)\).
But then by \cite[Rem.~3.10]{NakPal}, there is an isomorphism
\[\begin{tikzcd}
FX\arrow[infl]{r}{Ff}\arrow[equal]{d}&FY\arrow[defl]{r}{Fg}\arrow[equal]{d}&FZ\arrow{d}{z}\arrow[confl]{r}{\Gamma\delta}&\phantom{}\\
FX\arrow[infl]{r}{Ff}&FY\arrow[defl]{r}{g'}&Z'\arrow[confl]{r}{\delta'}&\phantom{}
\end{tikzcd}\]
of \(\EE'\)-conflations, from which it follows that \(\Gamma\delta=z^*\delta'\in\EE''(FZ,FX)\).
Thus, \(\delta\in F^{-1}\EE''(Z,X)\), and \(f\) is an \(F^{-1}\EE''\)-inflation.

Conversely, assume that \(f\) is an \(F^{-1}\EE''\)-inflation, so there is an \(F^{-1}\EE''\)-conflation
\[\s|_{F^{-1}\EE''}(\delta)=(\begin{tikzcd}
X\arrow[infl]{r}{f}&Y\arrow[defl]{r}{g}&Z\arrow[confl]{r}{\delta}&\phantom{}
\end{tikzcd})\]
in \(\EE\), with \(\delta\in F^{-1}\EE''(Z,X)\).
Since \(F^{-1}\EE''\leq\EE\) is a subfunctor, and the realisation map is restricted from \(\s\), this is also an \(\EE\)-conflation, and in particular \(f\) is also an \(\EE\)-inflation.
Since \(\delta\in F^{-1}\EE''(Z,X)\), we have \(\Gamma\delta\in\EE''(FZ,FX)\).
Thus, \(F\s(\delta)=\s'(\Gamma\delta)\) is an \(\EE''\)-conflation, from which it follows that \(Ff\) is an \(\EE''\)-inflation.
This establishes \ref{extri-pb-infl}, and \ref{extri-pb-defl} is dual.

Property \ref{extri-pb-functor} holds essentially by construction.
First note that \(F\s(\delta)=\s'(\Gamma\delta)\) for all \(\delta\in\EE(Z,X)\) since \(F\) is an extriangulated functor.
If \(\delta\in F^{-1}\EE''(Z,X)\) then, by definition, \(\Gamma\delta\in\EE''(FZ,FX)\), and this identity may be written as
\[F\s|_{F^{-1}\EE''}(\delta)=\s'|_{\EE''}(\Gamma\delta).\]
Thus, \(F\) and \(\Gamma\) define an extriangulated functor as claimed.
If \(\delta\in\EE(Z,X)\) is a conflation in some substructure on \(\cat{E}\) for which \(F\) and \(\Gamma\) define an extriangulated functor to \((\cat{E}',\EE'',\s'|_{\EE''})\), then in particular \(\Gamma\delta\in\EE''(FZ,FX)\), so \(\delta\in F^{-1}\EE''(Z,X)\).
This establishes maximality.
\end{proof}

\begin{rem}
\begin{enumerate}
\item As one would expect, Proposition~\ref{p:extri-pullback} has several precursors.
For exact categories, this construction is described in \cite[\S1.4]{DRSS} (see also \cite{Heller} and, in the case that \(\cat{E}'\) is split exact, \cite[\S4.3]{Buehler}).
For exact \(\infty\)-categories, see \cite[Prop.~5.8]{Christ-Fukaya}.
As mentioned in Section~\ref{s:substruct}, the special case given in Lemma~\ref{l:functor-construction} was observed in \cite[Eg.~3.5(3)]{BalSte} (see also \cite[Eg.~2.3(2)]{Beligiannis-RHA}).
\item To deduce Lemma~\ref{l:functor-construction}  from Proposition~\ref{p:extri-pullback}, we take \(\cat{E}=\cat{C}\) and \(\cat{E}'=\cat{D}\), with their triangulated structures, and take \(\EE''=0\) to be the split extriangulated structure on \(\cat{D}\).
In this context, \(F\) is a triangle functor, so there is a specified natural transformation \(\eta\colon F\susp_{\cat{C}}\to\susp_{\cat{D}}F\).
The natural transformation \(\Gamma\colon\Hom_{\cat{C}}(Z,\susp X)\to\Hom_{\cat{D}}(FZ,F\susp_{\cat{C}} X)\isoto\Hom_{\cat{D}}(FZ,\susp_{\cat{D}} FX)\) required to make \(F\) an extriangulated functor is induced from \(\eta\) \cite[Thm.~2.33]{BTS}.

\item Whenever the category \(\cat{E}\) from Proposition~\ref{p:extri-pullback} is triangulated, the statements \ref{extri-pb-infl} and \ref{extri-pb-defl} simplify, since in this case every morphism in \(\cat{E}\) is both an inflation and a deflation.

\item Given extriangulated categories \((\cat{E},\EE,\s)\) and \((\cat{E}',\EE',\s')\), an additive functor \(F\colon\cat{E}\to\cat{E}'\) and a natural transformation \(\Gamma\colon\EE(\blank,\blank)\to\EE'(F\blank,F\blank)\), it would be interesting to find an efficient set of additional assumptions needed to guarantee the existence of a maximal extriangulated substructure on \(\cat{E}\) making \(F\) (and \(\Gamma\)) an extriangulated functor.
Proposition~\ref{p:extri-pullback} demonstrates that this happens whenever \(\cat{E}'\) admits a finer extriangulated structure (that is, one with more conflations) for which \(F\) is extriangulated.
This issue may thus be related to the question of whether additive categories admit maximal extriangulated structures, as answered positively by Rump \cite{Rump-MaxExact} for exact structures, and by Chen \cite[Thm.~7.10]{Chen-Thesis} for exact dg structures (which are enhancements of extriangulated structures).
\end{enumerate}
\end{rem}

\defbibheading{bibliography}[\refname]{\section*{#1}}\printbibliography
\end{document}